\begin{document}

\newtheorem{theorem}{Theorem}[section]
\newtheorem{prop}[theorem]{Proposition}
\newtheorem{lemma}[theorem]{Lemma}
\newtheorem{cor}[theorem]{Corollary}
\newtheorem{cond}[theorem]{Condition}
\newtheorem{ing}[theorem]{Ingredients}
\newtheorem{conj}[theorem]{Conjecture}
\newtheorem{claim}[theorem]{Claim}
\newtheorem{constr}[theorem]{Construction}
\newtheorem{rem}[theorem]{Remark}

\newtheorem*{theorem*}{Theorem}
\newtheorem*{modf}{Modification for arbitrary $n$}
\newtheorem{qn}[theorem]{Question}
\newtheorem{condn}[theorem]{Condition}
\newtheorem*{BGIT}{Bounded Geodesic Image Theorem}
\newtheorem*{BI}{Behrstock Inequality}
\newtheorem*{QCH}{Wise's Quasiconvex Hierarchy Theorem}

\theoremstyle{definition}
\newtheorem{defn}[theorem]{Definition}
\newtheorem{eg}[theorem]{Example}
\newtheorem{rmk}[theorem]{Remark}

\newcommand{\map}{\rightarrow}
\newcommand{\boundary}{\partial}
\newcommand{\C}{{\mathbb C}}
\newcommand{\integers}{{\mathbb Z}}
\newcommand{\natls}{{\mathbb N}}
\newcommand{\ratls}{{\mathbb Q}}
\newcommand{\reals}{{\mathbb R}}
\newcommand{\proj}{{\mathbb P}}
\newcommand{\lhp}{{\mathbb L}}
\newcommand{\tr}{{\operatorname{Tread}}}
\newcommand{\rs}{{\operatorname{Riser}}}
\newcommand{\tube}{{\mathbb T}}
\newcommand{\cusp}{{\mathbb P}}
\newcommand\AAA{{\mathcal A}}
\newcommand\BB{{\mathcal B}}
\newcommand\CC{{\mathcal C}}
\newcommand\ccd{{{\mathcal C}_\Delta}}
\newcommand\DD{{\mathcal D}}
\newcommand\EE{{\mathcal E}}
\newcommand\FF{{\mathcal F}}
\newcommand\GG{{\mathcal G}}
\newcommand\HH{{\mathcal H}}
\newcommand\II{{\mathcal I}}
\newcommand\JJ{{\mathcal J}}
\newcommand\KK{{\mathcal K}}
\newcommand\LL{{\mathcal L}}
\newcommand\MM{{\mathcal M}}
\newcommand\NN{{\mathcal N}}
\newcommand\OO{{\mathcal O}}
\newcommand\PP{{\mathcal P}}
\newcommand\QQ{{\mathcal Q}}
\newcommand\RR{{\mathcal R}}
\newcommand\SSS{{\mathcal S}}

\newcommand\TT{{\mathcal T}}
\newcommand\ttt{{\mathcal T}_T}
\newcommand\tT{{\widetilde T}}
\newcommand\UU{{\mathcal U}}
\newcommand\VV{{\mathcal V}}
\newcommand\WW{{\mathcal W}}
\newcommand\XX{{\mathcal X}}
\newcommand\YY{{\mathcal Y}}
\newcommand\ZZ{{\mathcal Z}}
\newcommand\CH{{\CC\HH}}
\newcommand\TC{{\TT\CC}}
\newcommand\EXH{{ \EE (X, \HH )}}
\newcommand\GXH{{ \GG (X, \HH )}}
\newcommand\GYH{{ \GG (Y, \HH )}}
\newcommand\PEX{{\PP\EE  (X, \HH , \GG , \LL )}}
\newcommand\MF{{\MM\FF}}
\newcommand\PMF{{\PP\kern-2pt\MM\FF}}
\newcommand\ML{{\MM\LL}}
\newcommand\mr{{\RR_\MM}}
\newcommand\tmr{{\til{\RR_\MM}}}
\newcommand\PML{{\PP\kern-2pt\MM\LL}}
\newcommand\GL{{\GG\LL}}
\newcommand\Pol{{\mathcal P}}
\newcommand\half{{\textstyle{\frac12}}}
\newcommand\Half{{\frac12}}
\newcommand\Mod{\operatorname{Mod}}
\newcommand\Area{\operatorname{Area}}
\newcommand\ep{\epsilon}
\newcommand\hhat{\widehat}
\newcommand\Proj{{\mathbf P}}
\newcommand\U{{\mathbf U}}
 \newcommand\Hyp{{\mathbf H}}
\newcommand\D{{\mathbf D}}
\newcommand\Z{{\mathbb Z}}
\newcommand\R{{\mathbb R}}
\newcommand\bN{\mathbb{N}}
\newcommand\s{{\Sigma}}
\renewcommand\P{{\mathbb P}}
\newcommand\Q{{\mathbb Q}}
\newcommand\E{{\mathbb E}}
\newcommand\til{\widetilde}
\newcommand\length{\operatorname{length}}
\newcommand\BU{\operatorname{BU}}
\newcommand\gesim{\succ}
\newcommand\lesim{\prec}
\newcommand\simle{\lesim}
\newcommand\simge{\gesim}
\newcommand{\simmult}{\asymp}
\newcommand{\simadd}{\mathrel{\overset{\text{\tiny $+$}}{\sim}}}
\newcommand{\ssm}{\setminus}
\newcommand{\diam}{\operatorname{diam}}
\newcommand{\pair}[1]{\langle #1\rangle}
\newcommand{\T}{{\mathbf T}}
\newcommand{\inj}{\operatorname{inj}}
\newcommand{\pleat}{\operatorname{\mathbf{pleat}}}
\newcommand{\short}{\operatorname{\mathbf{short}}}
\newcommand{\vertices}{\operatorname{vert}}
\newcommand{\collar}{\operatorname{\mathbf{collar}}}
\newcommand{\bcollar}{\operatorname{\overline{\mathbf{collar}}}}
\newcommand{\I}{{\mathbf I}}
\newcommand{\tprec}{\prec_t}
\newcommand{\fprec}{\prec_f}
\newcommand{\bprec}{\prec_b}
\newcommand{\pprec}{\prec_p}
\newcommand{\ppreceq}{\preceq_p}
\newcommand{\sprec}{\prec_s}
\newcommand{\cpreceq}{\preceq_c}
\newcommand{\cprec}{\prec_c}
\newcommand{\topprec}{\prec_{\rm top}}
\newcommand{\Topprec}{\prec_{\rm TOP}}
\newcommand{\fsub}{\mathrel{\scriptstyle\searrow}}
\newcommand{\bsub}{\mathrel{\scriptstyle\swarrow}}
\newcommand{\fsubd}{\mathrel{{\scriptstyle\searrow}\kern-1ex^d\kern0.5ex}}
\newcommand{\bsubd}{\mathrel{{\scriptstyle\swarrow}\kern-1.6ex^d\kern0.8ex}}
\newcommand{\fsubeq}{\mathrel{\raise-.7ex\hbox{$\overset{\searrow}{=}$}}}
\newcommand{\bsubeq}{\mathrel{\raise-.7ex\hbox{$\overset{\swarrow}{=}$}}}
\newcommand{\tw}{\operatorname{tw}}
\newcommand{\base}{\operatorname{base}}
\newcommand{\trans}{\operatorname{trans}}
\newcommand{\rest}{|_}
\newcommand{\bbar}{\overline}
\newcommand{\UML}{\operatorname{\UU\MM\LL}}
\renewcommand{\d}{\operatorname{diam}}
\newcommand{\hs}{{\operatorname{hs}}}
\newcommand{\EL}{\mathcal{EL}}
\newcommand{\tsum}{\sideset{}{'}\sum}
\newcommand{\tsh}[1]{\left\{\kern-.9ex\left\{#1\right\}\kern-.9ex\right\}}
\newcommand{\Tsh}[2]{\tsh{#2}_{#1}}
\newcommand{\qeq}{\mathrel{\approx}}
\newcommand{\Qeq}[1]{\mathrel{\approx_{#1}}}
\newcommand{\qle}{\lesssim}
\newcommand{\Qle}[1]{\mathrel{\lesssim_{#1}}}
\newcommand{\simp}{\operatorname{simp}}
\newcommand{\vsucc}{\operatorname{succ}}
\newcommand{\vpred}{\operatorname{pred}}
\newcommand\fhalf[1]{\overrightarrow {#1}}
\newcommand\bhalf[1]{\overleftarrow {#1}}
\newcommand\sleft{_{\text{left}}}
\newcommand\sright{_{\text{right}}}
\newcommand\sbtop{_{\text{top}}}
\newcommand\sbot{_{\text{bot}}}
\newcommand\sll{_{\mathbf l}}
\newcommand\srr{_{\mathbf r}}
\newcommand\geod{\operatorname{\mathbf g}}
\newcommand\mtorus[1]{\boundary U(#1)}
\newcommand\A{\mathbf A}
\newcommand\Aleft[1]{\A\sleft(#1)}
\newcommand\Aright[1]{\A\sright(#1)}
\newcommand\Atop[1]{\A\sbtop(#1)}
\newcommand\Abot[1]{\A\sbot(#1)}
\newcommand\boundvert{{\boundary_{||}}}
\newcommand\storus[1]{U(#1)}
\newcommand\Momega{\omega_M}
\newcommand\nomega{\omega_\nu}
\newcommand\twist{\operatorname{tw}}
\newcommand\SSSS{{\til{\mathcal S}}}
\newcommand\modl{M_\nu}
\newcommand\MT{{\mathbb T}}
\newcommand\dw{{d_{weld}}}
\newcommand\dt{{d_{te}}}
\newcommand\Teich{{\operatorname{Teich}}}
\renewcommand{\Re}{\operatorname{Re}}
\renewcommand{\Im}{\operatorname{Im}}
\newcommand{\mc}{\mathcal}
\newcommand{\ccs}{{\CC(S)}}
\newcommand{\mtdw}{{(\til{M_T},\dw)}}
\newcommand{\tmtdw}{{(\til{M_T},\dw)}}
\newcommand{\tmldw}{{(\til{M_l},\dw)}}
\newcommand{\mtdt}{{(\til{M_T},\dt)}}
\newcommand{\tmtdt}{{(\til{M_T},\dt)}}
\newcommand{\tmldt}{{(\til{M_l},\dt)}}
\newcommand{\trvw}{{\tr_{vw}}}
\newcommand{\ttrvw}{{\til{\tr_{vw}}}}
\newcommand{\but}{{\BU(T)}}
\newcommand{\ilkv}{{i(lk(v))}}
\newcommand{\pslc}{{\mathrm{PSL}_2 (\mathbb{C})}}
\newcommand{\tttt}{{\til{\ttt}}}
\newcommand{\bcomment}[1]{\textcolor{blue}{#1}}

\newcommand{\Star}{\operatorname{star}}
\newcommand{\jfmchange}[1]{{\color{purple}{#1}}}

\newcommand{\defstyle}[1]{\textbf{#1}}
\newcommand{\emphstyle}[1]{\emph{#1}}

\title{Cubulating Surface-by-free Groups}

\author[Mahan Mj]{%
	Mahan Mj\\ with an appendix by Jason Manning, Mahan Mj, and Michah Sageev}

\address{Department of Mathematics, Cornell University, 573 Malott Hall, 
Ithaca, NY  14853,  U.S.A.}
\email{jfmanning@cornell.edu}
\thanks{JM supported by Simons Collaboration Grants \#524176 and \#942496.}

\address{School of Mathematics, Tata Institute of Fundamental Research, Mumbai-40005, India}
\email{mahan@math.tifr.res.in}
\thanks{ MM partly supported by a DST JC Bose Fellowship, a grant from the Infosys Foundation, Matrics research project grant  MTR/2017/000005  and CEFIPRA  project No.\ 5801-1.}

\address{Mathematics Department,
	Technion - Israel Institute of Technology,
	Haifa, 32000,
	Israel   }
 \email{sageevm@technion.ac.il}
 \thanks{ MS partly supported by the Israel Science Foundation (grant 1026/15).}

\subjclass[2010]{ 20F65, 20F67 (Primary), 22E40, 57M50}
\keywords{CAT(0) cube complex, convex cocompact subgroup, mapping class group, subsurface projection, curve complex, quasiconvex hierarchy}

\date{\today}

\begin{abstract}  Let $$1 \to H \to G \to Q \to 1$$ be an exact sequence where $H= \pi_1(S)$ is the fundamental 
group of a closed surface $S$ of genus greater than one, $G$ is hyperbolic and $Q$ is finitely generated free.
The aim of this paper is to provide sufficient conditions to prove that $G$ is cubulable and construct examples satisfying these conditions. The main result may be thought of as a
  combination theorem for virtually special hyperbolic groups when the amalgamating  subgroup is not quasiconvex. Ingredients include the theory of tracks, the quasiconvex hierarchy theorem of Wise,  the distance estimates in the mapping class group from subsurface projections due to Masur-Minsky 
 and the model geometry for doubly degenerate Kleinian surface groups used in the proof of the ending lamination theorem.
 
 An appendix to this paper by Manning, Mj, and Sageev proves a reduction theorem by showing that cubulability of $G$ follows from the existence of an essential incompressible quasiconvex track
 in a surface bundle over a graph with fundamental group $G$.

\end{abstract}

\maketitle

\tableofcontents

\section{Introduction} This paper lies at the interface of two themes in geometric group theory that have attracted a lot of attention of late: convex cocompact subgroups of mapping class groups, and cubulable hyperbolic groups. 
Let 
$  1 \to H \to G \to Q \to 1$ be an exact sequence with $H$ a closed surface group and $Q$ a convex cocompact subgroup of $MCG(S)$. It follows that $G$ is 
hyperbolic. In fact convex cocompactness of 
$Q$ is equivalent to hyperbolicity of $G$ \cite{farb-coco,hamen} {(see also \cite[Theorem 1.2]{kl-coco} where other equivalent notions of convex cocompactness are given)}.
The only known examples of convex cocompact subgroups $Q$ of $MCG(S)$ are virtually free. 
Cubulable groups, by which we mean  groups acting freely, properly discontinuously and cocompactly by isometries (cellular isomorphisms) 
on a CAT(0) cube complex, have been objects of much attention over the last few years particularly due to path-breaking work of Agol and Wise.
In this paper, we shall address the following  question that lies at the interface of these two themes:
\begin{qn}\label{splqn} Let 
\begin{equation}\label{exactsequence}  1 \to H \to G \to Q \to 1
\end{equation}
  be an exact sequence of  groups, where $H= \pi_1(S)$ is the fundamental 
	group of a closed surface $S$ of genus greater than one, $G$ is hyperbolic and $Q$ is a finitely generated free group of rank $n$.
	\begin{enumerate}
		\item[(i)] Does $G$ have a quasiconvex hierarchy? 
		Equivalently (by Wise's  Theorem 
		\ref{wise-hierarchy} below), is $G$ virtually special  cubulable? 
		\item[(ii)] In particular, is $G$ linear?
	\end{enumerate}
\end{qn}
{Question \ref{splqn} makes sense even when $Q$ is not  free. However, in this paper we shall only address the case where $Q$ is free,
 providing} sufficient conditions on the exact sequence \eqref{exactsequence} guaranteeing an affirmative answer to Question \ref{splqn}. We shall also  construct examples satisfying these conditions. A somewhat surprising consequence, using work of Kielak \cite{kielak}, is the existence of groups $G$ as in Question \ref{splqn} that surject to $\Z$ with finitely generated kernel  (Section \ref{vfiber}).
Note that an affirmative answer to the first question  in Question \ref{splqn} implies an affirmative answer to the second. To the best of our knowledge, when the rank of $Q$ is greater than one, there was no known example of a linear $G$ as above, and the answer is not known in general. This is
perhaps not too surprising as  linearity even in the case $n=1$ really goes back to Thurston's {hyperbolization of atoroidal fibered 3-manifolds} \cite{thurston-hypstr2} and the latter feeds into the cubulability of these 3-manifold groups. 

The main theorem of this paper may also be looked upon as evidence for a combination theorem of cubulable groups along non-quasiconvex subgroups. Let us specialize to the  case $n=2$ in Question \ref{splqn} for the time being. Let $A$ (resp.\ $B$) be the fundamental group of a closed hyperbolic 3-manifold $M_1$ (resp.\ $M_2$)  fibering over the circle with fiber a closed surface $S$ of genus at least 2. Let $C = \pi_1(S)$ be the fundamental group of the fiber and we ask if  $A\ast_CB$ is cubulable. We point out a preliminary caveat. Since the distortion 
of the fiber subgroup $C$ in $A$ is exponential, the double of $A$ along $C$ given by $G_0 = A \ast_C A$ has an exponential isoperimetric inequality.  Since $CAT(0)$ groups satisfy a quadratic isoperimetric inequality, $G_0$ cannot be a $CAT(0)$ group; in particular $G_0$ is not cubulable. It therefore makes sense to demand that the group $G$ resulting from the combination is hyperbolic.
Unlike in the existing literature (see  \cite{hw-annals, hw-inv, wise-hier} for instance),
the amalgamating subgroup $C$  is not quasiconvex in $A$ or $B$.

We briefly indicate the broader framework in which our results sit.
The starting point of this work is Wise's quasiconvex hierarchy theorem \cite{wise-hier} for hyperbolic cubulable groups:

\begin{theorem} \cite{wise-hier} \label{wise-hierarchy}  Let $G$ be a finite graph of hyperbolic groups  so that  $G$ is hyperbolic, { the vertex groups are virtually special cubulable and the edge groups are quasiconvex} in $G$.  Then $G$ is virtually special cubulable.  \end{theorem}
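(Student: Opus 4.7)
The strategy is to follow Wise's inductive approach: first cubulate $G$ by combining walls coming from the vertex groups, then upgrade the resulting cube complex action to a virtually special one via the Malnormal Special Quotient Theorem. The induction runs on a complexity measure of the quasiconvex hierarchy implicit in the graph-of-groups decomposition, with base case being graphs of finite groups, where $G$ is virtually free and hence trivially virtually special.

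For the cubulation step, each vertex group $G_v$ acts properly and cocompactly on a special CAT(0) cube complex $Y_v$, giving an abundance of quasiconvex codimension-one subgroups, namely the hyperplane stabilizers. I would produce codimension-one subgroups of $G$ by extending such hyperplane subgroups across the Bass-Serre tree of the splitting: a hyperplane in one $Y_v$ lifts and is extended across adjacent edge groups to form a wall in $G$. Quasiconvexity of the vertex groups in $G$ implies quasiconvexity of the edge groups (as intersections of adjacent vertex groups in a hyperbolic ambient group), and this in turn ensures that the resulting wall stabilizers are quasiconvex in $G$. Applying Sageev's dual cube complex construction to this wallspace yields a CAT(0) cube complex $X$ on which $G$ acts, with properness and cocompactness of the action following from hyperbolicity of $G$ together with the Hruska--Wise cocompactness criterion, once one verifies that sufficiently many walls separate pairs of points in $\partial G$ (this uses the vertex-group walls separating pairs lying in a single vertex stabilizer, combined with the Bass--Serre tree separating pairs lying in distinct translates).

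The harder step is upgrading cubulability to virtual specialness. Here one invokes Wise's Malnormal Special Quotient Theorem: after passing to a finite index subgroup of $G$ and performing a Dehn filling on a relative hyperbolic structure whose peripherals encode the obstructing hyperplane stabilizer intersections, one obtains a group with a strictly shorter quasiconvex hierarchy. The inductive hypothesis then produces virtual specialness of the filled group, which is lifted back to $G$ through the filling. The main obstacle, and the technical heart of Wise's monograph, is arranging the Dehn filling step to strictly reduce hierarchy complexity without destroying hyperbolicity or the graph-of-groups structure; this requires the theory of malnormal combinations together with careful bookkeeping of wall-crossing configurations, and is where the bulk of the work in the original proof lies. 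A secondary but non-trivial issue is verifying that wall extensions across the Bass--Serre tree remain embedded (or at least two-sided) in $X$, which is what ensures that the hyperplanes of the resulting cube complex have the quasiconvex stabilizers needed to feed into the MSQT in the first place.
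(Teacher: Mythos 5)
The paper does not prove this statement: Theorem \ref{wise-hierarchy} is quoted verbatim from Wise's monograph \cite{wise-hier} and used as a black box (together with Agol's Theorem \ref{spl}) throughout, e.g.\ in Proposition \ref{agolwise}. So there is no in-paper proof to compare yours against, and no reader of this paper is expected to supply one. What you have written is an executive summary of Wise's own argument, not a proof; given that the original occupies a few hundred pages, the honest assessment is that your text identifies ingredients rather than establishes the result.

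On the merits of the summary itself, two structural points are off. First, the induction on hierarchy length is not needed for the statement as given here: the theorem is a single combination step in which the vertex groups are \emph{already assumed} virtually special, so the ``base case of graphs of finite groups'' and the claim that Dehn filling ``produces a group with strictly shorter hierarchy'' belong to the iterated form of the theorem, not to this one. Second, and more substantively, Wise's route to cubulating the amalgam is not to extend vertex-group hyperplanes across the Bass--Serre tree and then verify the Bergeron--Wise boundary criterion; rather, the Malnormal Special Quotient Theorem is applied to the \emph{vertex} groups, with peripheral structure coming from the height filtration of the edge group, to pass to finite covers in which the edge groups become almost malnormal, after which the Hsu--Wise cubulation of malnormal amalgams \cite{hw-inv} and the Haglund--Wise combination theorem for special cube complexes \cite{hw-annals} produce the virtually special cubulation. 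Your final paragraph correctly flags that the MSQT and the malnormality bookkeeping are where the real work lies, which is exactly why this sketch cannot stand in for a proof; for the purposes of this paper the correct move is simply to cite \cite{wise-hier}.
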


Further, a celebrated Theorem of  Agol \cite{agol-vh} proves a  conjecture due to Wise \cite{wise-hier, wise-cbms} and establishes:

\begin{theorem} \cite{agol-vh} \label{spl} Let $G$ be hyperbolic and cubulable. Then $G$ is virtually special.  \end{theorem}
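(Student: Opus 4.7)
The plan is to construct a finite-index subgroup $G_0 \le G$ such that the quotient of the CAT(0) cube complex $X$ (on which $G$ acts freely and cocompactly) by $G_0$ is a \emph{special} cube complex in the sense of Haglund--Wise: two-sided embedded hyperplanes with no self-osculations and no inter-osculations. Since $G$ is hyperbolic and the action is cocompact, hyperplane stabilizers are quasiconvex in $G$, and each acts geometrically on a CAT(0) cube complex of strictly smaller dimension. This sets up an induction on the dimension of $X$, with the zero-dimensional case trivial.

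The main engine at the inductive step is Wise's Malnormal Special Quotient Theorem, a consequence of Theorem \ref{wise-hierarchy}: for a virtually special hyperbolic $H$ and a malnormal collection of quasiconvex subgroups $\{P_i\}$, there exist finite-index normal $P_i^0 \trianglelefteq P_i$ such that for any finer choices $P_i' \le P_i^0$, the ``Dehn filling'' $H / \langle\langle P_i'\rangle\rangle$ is again hyperbolic and virtually special. By the characterization of specialness via coset separability, to make $X/G_0$ special it suffices to separate finitely many double-coset relations arising from the finitely many $G$-orbits of pathologies (pairs of hyperplanes that self-osculate, inter-osculate, or fail to be two-sided). One would like to cast each such pathology as a filling problem for hyperplane stabilizers and then appeal to MSQT.

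Concretely: (i) enumerate one representative of each $G$-orbit of pathological hyperplane pair; (ii) using the inductive hypothesis that hyperplane stabilizers are virtually special, and their double coset separability, find a finite-index subgroup in each stabilizer witnessing separation of the offending coset; (iii) apply MSQT to promote this separation to a Dehn filling of $G$ that remains hyperbolic and virtually special, killing that pathology in a finite cover; (iv) intersect the resulting finite-index subgroups and take $G_0$ deep enough in this intersection to avoid creating new pathologies. The finiteness of orbits ensures the process terminates.

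The main obstacle is that the family of hyperplane stabilizers in $G$ is not malnormal in general -- stabilizers can share infinite intersections along sub-hyperplanes -- so MSQT cannot be applied directly to the naive family. The way around this, which I expect to be the technical heart of the argument, is to color the hyperplane orbits so that within each color class the stabilizers do form an (almost) malnormal collection in an appropriate finite-index subgroup, handle each color separately, and recombine. Arranging such a coloring compatibly with the $G$-action, and verifying that separating pathologies one color at a time does not destroy the work done on the other colors, is the delicate bootstrap that turns partial control over hyperplane osculations into the global virtual specialness of $G$.
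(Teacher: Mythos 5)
This theorem is not proved in the paper at all: it is Agol's virtual specialness theorem, quoted from \cite{agol-vh} and used as a black box. So there is no in-paper argument to compare against; what you have written is an outline of Agol's own proof strategy, and as such it correctly identifies the skeleton (induction on the dimension of the cube complex, quasiconvexity and cocompactness of hyperplane stabilizers, Wise's Malnormal Special Quotient Theorem, Dehn filling, and a coloring of hyperplane orbits to repair the failure of malnormality).

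The genuine gap is in steps (ii)--(iv). The inductive hypothesis tells you that a hyperplane stabilizer $H$ is virtually special, hence (being hyperbolic) QCERF \emph{as a group in its own right}; it does not give you separability of $H$, or of double cosets involving $H$, \emph{inside} $G$ -- and $G$ is exactly the group whose virtual specialness you are trying to establish, so you cannot invoke the Haglund--Wise separability criterion without circularity. Bridging that gap is the entire content of Agol's Weak Separation Theorem and, above all, of his combinatorial gluing theorem for colored cube complexes (Theorem 5.1 of \cite{agol-vh}, the ``Agol machine''): one only obtains \emph{weak} separation via MSQT Dehn fillings, the filled quotients are different groups from which colorings must be lifted back to $G$, and the recombination is done by a probabilistic/combinatorial argument about gluing families of colored cubes, not by intersecting finitely many finite-index subgroups. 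Your closing sentence defers precisely this to a ``delicate bootstrap'' you ``expect'' to work; that bootstrap is the theorem. As written, the proposal is a correct roadmap but not a proof, and in the context of this paper the honest move is simply to cite \cite{agol-vh}, as the authors do.
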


The sufficient conditions we provide are a first attempt at relaxing the quasiconvexity hypothesis in Theorem \ref{wise-hierarchy}: is there a 
combination theorem for  cubulated groups along \emphstyle{non-quasiconvex} subgroups? We explicitly state the general question  below:

\begin{qn}\label{genlqn}
  Let $G$ be a finite graph of hyperbolic groups (e.g. $G=A\ast_CB$ or $G=A\ast_C$) so that the vertex groups are virtually special cubulable and $G$ is hyperbolic.  Is $G$ virtually special cubulable?
\end{qn}

 Related questions have been raised by Wise \cite[Problems 13.5, 13.15]{wise-icm}, for  instance when each of $A, B$ are hyperbolic free-by-cyclic groups of the form 
$$1 \to F_k \to G \to \Z \to 1$$
and $C$ is the normal subgroup $F_k$.

\subsection{Motivation and context} The base case of Question \ref{splqn} is when $Q=\Z$ and $G$ is the fundamental group of a 3-manifold $M$ fibering over the circle with fiber $S$. We briefly recall what goes into the proof  \cite{agol-vh,wise-hier} of the virtually special cubulability of such $G$. By Thurston's {hyperbolization theorem for atoroidal fibered 3-manifolds} \cite{thurston-hypstr2} $M$ admits a hyperbolic structure. Then, by work of Kahn-Markovic \cite{km-surf} there are many immersed quasiconvex surfaces in $M$. These are enough to separate pairs of points on $\partial G = S^2$. Hence by work of Bergeron-Wise \cite{bw}, $G$ is cubulable. Finally, by Agol's theorem \ref{spl} \cite{agol-vh}, $G$ is virtually special. 
In the restricted case that the first Betti number $b_1(G) > 1$, {an
embedded surface representing a class in the boundary of a fibered face of the
unit ball in the Thurston norm
must be quasi-Fuchsian
 (see \cite[p. 278]{clr}, \cite{fried}).} Starting with such an {\it embedded} quasi-Fuchsian surface in $M$ and using Wise's quasiconvex hierarchy theorem \ref{wise-hierarchy}, it follows that $G$ is virtually special. 

Yet another approach to the cubulation of $G$ when $Q=\Z$ was given by Dufour \cite{dufour} where the cross-cut surfaces of Cooper-Long-Reid \cite{clr} were used to manufacture enough codimension one quasiconvex subgroups. Dufour's approach essentially used the fact that the cross-cut surfaces of 
\cite{clr}  can be isotoped to be transverse to the suspension flow in $M$ and are hence incompressible. Replacing $H$ by a free group in Question \ref{splqn}, Hagen and Wise \cite{hagenwise1,hagenwise2} prove cubulability of hyperbolic $G$ with $Q=\Z$. Their proof again uses a replacement of the suspension flow (a semi-flow). {This was extended to hyperbolic hyperbolic-by-cyclic groups in very recent work by Dahmani-Krishna-Mutanguha \cite{dkm}.}

Thus, in the general context of 3-manifolds fibering over the circle with pseudo-Anosov monodromy, there are two methods of proving the existence of codimension one quasiconvex subgroups:

\begin{enumerate}
	\item Work of Cooper-Long-Reid \cite{clr} that is special to fibered manifolds.
	\item The general theorem of Kahn-Markovic \cite{km-surf} for hyperbolic 3-manifolds. This uses real hyperbolicity of $M$ in an essential way.
\end{enumerate}

We do not know an answer to the following in this generality:

\begin{qn}
	Let $G$ be as in Question \ref{splqn}. Does $G$ have a quasiconvex codimension one subgroup? 
\end{qn}

When $Q$ has rank greater than one, we do not have an analog of Thurston's {hyperbolization theorem for atoroidal fibered 3-manifolds} (or the geometrization theorem of Perelman) and hence we do not have an  analog of the  Kahn-Markovic theorem providing sufficiently many codimension one quasiconvex subgroups. We are thus forced to use softer techniques from the coarse geometry of hyperbolic groups, e.g.\ the Bestvina-Feighn combination theorem \cite{BF} giving necessary and sufficient conditions for the Gromov-hyperbolicity of $G$. 
{ A particular case of Question \ref{genlqn} arises when $A, B$ are fundamental groups of 3-manifolds fibering over the circle such that the fiber group is $C$.
The fiber group $C$ is clearly not quasiconvex.	We mention as an aside that the {turning} construction of Hsu-Wise \cite{hw-inv} requires quasiconvexity of the amalgamating subgroup. } We pose a general problem in this context
{ seeking a combination theorem for quasiconvex codimension one subgroups when the amalgamating subgroups are not necessarily quasiconvex}. This would help in addressing Question \ref{genlqn}:

\begin{qn}\label{qn-comb}
	Let $G$ be a finite graph $\GG$ of hyperbolic groups (e.g. $G=A\ast_CB$ or $G=A\ast_C$) so that the vertex groups are virtually special cubulable and $G$ is hyperbolic. {We do not assume that the edge groups are quasiconvex in $G$.
Find sufficient conditions on a finite family $\HH$ of quasiconvex codimension one subgroups  of vertex and edge groups of $G$, such that the subgroup $H$ of $G$ generated by $H_v \in \HH$ is quasiconvex and codimension one.} A case of particular interest is
 $G$ as in Question \ref{splqn}.
\end{qn}

{A basic test case of Question \ref{qn-comb} can be formulated as follows. Let $G=G_1 \ast_{G_{12}} G_2$. Let $H_i < G_i$, $i=1,2$, and $H_{12} = H_1 \cap H_2$. Assume further that $H =\langle H_1, H_2\rangle$
	is given by $H=H_1 \ast_{H_{12}} H_2$. Given that $H_i < G_i$
	is  quasiconvex and codimension one for $i=1,2$ and similarly for 
	 $H_{12} < {G_{12}}$, when is  $H$ 
 quasiconvex and codimension one in $G$? An answer to  Question \ref{qn-comb} would allow us
to construct quasiconvex and codimension one subgroups in $G$ and thus take a first step towards answering Question \ref{genlqn}.}

The boundary of  a $G$ as in Question \ref{splqn} is somewhat intractable. Abstractly, it may be regarded as a quotient of the circle (identified with the boundary $\partial H$ of $H=\pi_1(S)$) under the Cannon-Thurston map \cite{mitra-endlam,mahan-rafi} that collapses a Cantor set's worth of ending laminations, where the Cantor set is identified with the boundary $\partial Q$ of the quotient free group $Q$. It thus seems difficult to apply Bergeron-Wise's criterion for cubulability \cite{bw}. Further there is no natural replacement for the suspension flow: a flowline would have to be replaced by a tree and transversality breaks down, preempting any straightforward generalization of the techniques of \cite{dufour,hagenwise1,hagenwise2}. 

This forces us to find sufficient conditions guaranteeing the existence of a quasiconvex hierarchy. The replacement of embedded incompressible surfaces in our context are tracks. Our main theorem \ref{maineg0} gives sufficient conditions to ensure the existence of embedded tracks.
{To prove theorem \ref{maineg0}, we draw
	liberally from the model geometries that went into the proofs of the ending lamination theorem and the existence of Cannon-Thurston maps for Kleinian surface groups \cite{minsky-jams,minsky-cdm,minsky-elc1,minsky-elc2,mahan-split} as also the  hierarchy machinery of subsurface projections in the mapping class group \cite{masur-minsky,masur-minsky2}. These techniques were originally developed  to address problems of infinite covolume surface group representations  into $\pslc$ (see \cite[Problems 6-14]{thurston-bams} for instance).}
 In the interests of readability, the material that goes into proving {the existence of} geometric models is treated in the companion paper \cite{mahan-hyp}.

\subsection{Statement of Results} {In the special case of a hyperbolic 3-manifold $M$ fibering over the circle, our techniques yield  monodromies $\Phi$ and 
a fairly explicit construction  of embedded quasiconvex surfaces in the associated $M$ that cannot in general be made transverse to the suspension flow corresponding to $\Phi$} (see Remark \ref{rmk-euler}).
Thus these surfaces need not
realize the Thurston norm in their homology class (as in \cite{clr})
{and so incompressibility must be proven by different methods}.

The curve graph $\ccs$ of a closed surface of genus at least two \cite{masur-minsky} is a graph whose vertices are given by isotopy classes of simple closed curves, and whose edges are given by distinct isotopy classes of simple closed curves that can be realized disjointly on $S$.
 An element $\psi \in MCG(S)$ is said to be a pseudo-Anosov homeomorphism in the complement of
a simple closed curve $\alpha$ if it fixes $\alpha$,   restricts to a pseudo-Anosov on $(S\setminus \alpha)$, and further, the powers $\psi^n$  are renormalized by Dehn twists $\tw_\alpha^{k_n}$ so that the renormalized powers 
$\overline{\psi^n}:=\tw_\alpha^{k_n} \, \circ \, \psi^n$ do not twist about $\alpha$ (see Definitions \ref{def-notwist} and \ref{def-renpa} for details). The action of such a $\psi$ on the curve complex $\ccs$ fixes the vertex $\alpha$. Thus renormalized large powers of $\psi$ may be thought of as ``large rotations'' about $\alpha$ in $\ccs$. {Following \cite{masur-minsky2}, we say that a sequence of simple closed curves 
$\cdots, \sigma_{i-1}, \sigma_i, \sigma_{i+1}, \cdots$ on $S$ is a
{\bf tight geodesic} in $\ccs$ if
\begin{enumerate}
\item $\cdots, \sigma_{i-1}, \sigma_i, \sigma_{i+1}, \cdots$ is a geodesic in $\ccs$,
\item for all $i$, $\sigma_{i-1}, \sigma_{i+1}$ fill $S \setminus \sigma_i$.
\end{enumerate}}
A sequence of simple closed curves on $S$ on a tight geodesic of  length
at least one in $\ccs$ is called a {\bf tight sequence}.
Informally, Proposition \ref{qchierarchyin3m} below says: The composition
of large powers of pseudo-Anosovs in the respective complements of a pair of disjoint  homologous curves gives, via the mapping torus construction, a 3-manifold fibering over the circle with an embedded geometrically finite surface. Alternately, the composition
of large rotations (cf.\ \cite[Chapter 5]{dgo}) about a pair of homologous curves gives the monodromy of a 3-manifold fibering over the circle with an embedded geometrically finite surface.

{
	In Proposition \ref{qchierarchyin3m} below, and in the rest of this paper, whenever 
	we refer to a sequence of \emph{homologous} simple closed  curves, we shall mean
	a sequence of simple closed  curves that are homologous \emph{up to a choice of orientation.}}
Then
(see Proposition \ref{qchierin3m}, {Remark~\ref{rmk-qchierin3m}} and Remark \ref{qchierin3msep}):
\begin{prop}\label{qchierarchyin3m}
	Let $v_1,v_2 \in \ccs$ be a pair of adjacent vertices in $\ccs$ such that $v_1, v_2$ correspond to homologous simple closed non-separating curves $\sigma_1, \sigma_2$. For $i = 1, 2$, let $\psi_i:S \to S$ be  a pseudo-Anosov  homeomorphism in the complement of $\sigma_i$. 
{	Then there exists $p_0 \geq 1 $ (depending on $S$, $\psi_i$), such that the following holds.}\\
	Let
	$\Phi(p_1, p_2)=\overline{\psi_2^{p_2}}.   \overline{\psi_1^{p_1}}$ and let $M(p_1,p_2)$ be the 3-manifold fibering over the circle with fiber $S$ and monodromy $\Phi(p_1, p_2)$. 
{	For all $p_1, p_2$ with $|p_1|, |p_2|\geq p_0$,} $M(p_1,p_2)$  
 admits an embedded incompressible geometrically finite surface.  
\end{prop}

As a consequence, {we obtain the following.}
{\begin{cor}\label{cor-qchier}
Let 	$v_1,v_2 \in \ccs$, $\psi_i:S \to S$, and  $M(p_1, p_2)$ be  
as in Proposition~\ref{qchierarchyin3m}. 	Then there exists $p_0 \geq 1 $ (depending on $S$, $\psi_i$), such that for all $p_1, p_2$ with $|p_1|, |p_2|\geq p_0$, $M(p_1, p_2)$ admits a quasiconvex hierarchy (in the terminology of Wise's Theorem \ref{wise-hierarchy}).
\end{cor} }
Of course, Agol's Theorem \ref{spl} shows that the manifolds $M(p_1, p_2)$ in Proposition \ref{qchierarchyin3m} are virtually special cubulable and hence a finite cover of any such $M(p_1, p_2)$ does admit a quasiconvex hierarchy. When the first Betti number $b_1(M(p_1, p_2))$ is at least $2$,  $M(p_1, p_2)$ itself admits an embedded geometrically finite surface by an    argument involving the Thurston norm \cite{thurston-mams}. However,  Proposition \ref{qchierarchyin3m} furnishes a new sufficient condition on the monodromy $\Phi(p_1, p_2)$ to guarantee
the existence of an embedded incompressible geometrically finite surface in the 3-manifold $M(p_1, p_2)$ even when $b_1(M(p_1, p_2))=1$.  When $b_1(M(p_1, p_2))=1$, the surfaces we construct are necessarily separating. 
{We also mention work of Brock-Dunfield \cite{brock-dunfeld} and Sisto \cite{sisto-jta}  in a similar spirit that uses model geometries of degenerate ends to extract information about closed manifolds.}

Proposition \ref{qchierarchyin3m} becomes an ingredient for the next theorem which provides some of the main new examples of this paper (see Theorems \ref{maintech} and \ref{maineg1}).  We first provide a  statement  using the terminology of hierarchies \cite{masur-minsky2} before giving an alternate description.
{(Theorem \ref{maineg0} below follows from 
	Theorems \ref{maintech} and \ref{eiqimpliescube}: {Theorem \ref{maintech} proves the existence of an EIQ track in the sense of Definition~\ref{def-eiq} and Theorem~\ref{eiqimpliescube} uses this to furnish a quasiconvex hierarchy.})}

\begin{theorem}\label{maineg0}
	Let 
	$Q$ be a  subgroup of $MCG(S)$ isomorphic to the free group $F_n$, and $\sigma$  a non-separating simple closed curve on $S$ satisfying the following conditions:
	\begin{enumerate}
		\item {\it Tight tree:} The orbit map $q \to q.\sigma$, $q \in Q$ extends to  a $Q-$equivariant  isometric embedding $i$ of  a tree $T_Q$ into $\CC(S)$ such that $T_Q/Q$ is a finite graph;
		\item {\it Large links:}  $d_{\CC(S\setminus i(v))} (i(v_1), i(v_2)) \gg 1$,  for any vertex $v$ of $T$ and distinct neighbors $v_1, v_2$ of $v$ in $T$.
		\item  {\it Homologous curves:}   All vertices of $i(T)$ are homologous to each other.
		\item {\it subordinate hierarchy paths small:} Hierarchy geodesics  subordinate to the geodesics in  $\CC(S\setminus i(v))$ (Item (2) above)  are uniformly bounded.
	\end{enumerate}
	Then $Q$ is convex cocompact.
	For $$1 \to \pi_1(S) \to G \to Q \to 1$$ the induced exact sequence of hyperbolic groups, $G$ admits a quasiconvex hierarchy and hence is cubulable and virtually special.
\end{theorem}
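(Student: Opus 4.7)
The plan is to proceed in three stages. First, I establish that $Q$ is convex cocompact, which by Farb-Mosher, Hamenst\"adt, and Kent-Leininger suffices for $G$ to be hyperbolic. Second, using the homologous curves hypothesis together with the model geometry developed in the companion paper \cite{mahan-hyp}, I construct an embedded $G$-equivariant collection of tracks in a model for $G$ and read off an associated graph-of-groups splitting of $G$. Third, I verify quasiconvexity of the pieces and apply Wise's Quasiconvex Hierarchy Theorem (Theorem \ref{wise-hierarchy}) to conclude that $G$ is virtually special cubulable.

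For convex cocompactness of $Q$, by the Kent-Leininger/Hamenst\"adt characterization it suffices to show that the orbit map $Q \to \CC(S)$ is a quasi-isometric embedding. Since $i \colon T_Q \to \CC(S)$ is isometric and $Q$ acts cocompactly on $T_Q$, the content is to show that a word in $Q$ corresponding to a reduced edge-path in $T_Q$ traces out a $\CC(S)$-quasigeodesic. Applying the Bounded Geodesic Image Theorem and the Behrstock Inequality inside each subsurface complement $\CC(S \setminus i(v))$, and invoking the large links hypothesis, one rules out backtracking at each branch point of $T_Q$ and obtains the required quasi-isometric estimate.

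For the track construction, the homologous curves hypothesis is essential. All curves $i(v)$ represent the same non-zero integral homology class in $H_1(S)$ and are non-separating, so via the theory of tracks each can be realized as an embedded dual track on a fiber $S$, and these realizations can be chosen $Q$-equivariantly and pairwise disjoint in the universal cover of a model for $G$. The resulting collection of tracks cuts the model into pieces in equivariant bijection with the vertices of $T_Q$; the induced graph-of-groups splitting of $G$ has vertex groups $G_v$ fitting into $1 \to \pi_1(S) \to G_v \to Q_v \to 1$, and edge groups corresponding (up to finite index) to the fundamental groups of the cut surfaces $S \setminus i(v)$.

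The main obstacle, and the technical heart of the argument, is to verify that these edge groups (and, iteratively, the vertex groups) are quasiconvex in $G$. Each edge group sits inside a vertex group $G_v$ which has the structure of a 3-manifold-like bundle as in Proposition \ref{qchierarchyin3m}; the large links and small subordinate hierarchy paths hypotheses then translate, via the Masur-Minsky distance formula and the model geometry of \cite{mahan-hyp}, into the statement that the cut surface embeds as a geometrically finite subsurface inside $G_v$, and is hence quasiconvex in $G$. Granting this, an induction on the complexity of the $Q$-action on $T_Q$, with the base case of a single $Q$-orbit of vertices handled by the one-tread construction of Section \ref{sec:1tread}, assembles a full quasiconvex hierarchy for $G$, so that Theorem \ref{wise-hierarchy} yields that $G$ is cubulable and virtually special.
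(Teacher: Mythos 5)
Your first stage is consistent with the paper: convex cocompactness of $Q$ is indeed obtained by showing the orbit map is a quasi-isometric embedding, and the tight-tree/large-links hypotheses feed into Proposition \ref{isometrictighttree} essentially as you describe. The problem is your second stage, and it propagates into the third. The splitting you propose cannot exist: if a vertex group $G_v$ of a graph-of-groups decomposition of $G$ contains the normal subgroup $\pi_1(S)$, then $\pi_1(S)$ fixes a vertex of the Bass--Serre tree, and by normality it fixes the whole (minimal) tree, hence lies in \emph{every} edge group; this contradicts your claim that the edge groups are (commensurable with) the infinite-index free subgroups $\pi_1(S\setminus i(v))$. Relatedly, the step ``quasiconvex in $G_v$, hence quasiconvex in $G$'' is a non sequitur, since a $G_v$ containing the normal fiber subgroup is itself not quasiconvex in $G$. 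You have also misread the role of the homologous-curves hypothesis: it is not used to realize the curves $i(v)$ as disjoint ``dual tracks'' (homologous curves at curve-graph distance at least $2$ generally intersect), but to guarantee that for each adjacent pair $v,w$ there is an essential subsurface of the fiber with boundary exactly $i(v)\cup i(w)$.

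What the paper actually builds is a single connected track, the tree-stairstep $\ttt$ of Definition \ref{def-tstairstep}: treads (the cobounded subsurfaces just mentioned, sitting in mid-fibers) glued along risers (annuli $i(v)\times T_v$ over the curves). The technical heart is Theorem \ref{maintech}: elevations of treads are uniformly quasiconvex in the tube-electrified model $\tmtdt$ (proved by a geometric-limit argument on special split geometry manifolds, Lemma \ref{qcMl}, using that no ending lamination of a limit can be carried by a cut subsurface), and the local-to-global principle for quasigeodesics then shows the whole stairstep is quasi-isometrically embedded and incompressible once $L$ is large; the thickness needed there is exactly what your hypothesis (4) supplies. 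With an EIQ track in hand, Theorem \ref{eiqimpliescube} produces the hierarchy in one pass rather than by your proposed induction: the track meets every fiber (Lemma \ref{trackintersectsfiber}, which is where normality of $\pi_1(S)$ is actually used, in the opposite direction from your proposal), so the complementary pieces meet each fiber in proper subsurfaces, which are quasiconvex in $G$ by Theorem \ref{kldmrlemma} and terminate in free groups. Your proposal names several of the right ingredients but omits both the correct track construction and the quasiconvexity argument that constitute the content of the theorem.
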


{
In   Theorem \ref{maineg0} above, the conclusion that $Q$ is convex cocompact follows just from hypothesis (1) and is due to Kent-Leininger \cite{kl-coco} and Hamenstadt \cite{hamen}.}
We now describe fairly explicitly a way of constructing groups $Q$ (and hence $G$) as in {Theorem \ref{maineg0}}.  We shall use the notion of subsurface projections from \cite{masur-minsky2} (the relevant material is summarized in \cite[Section 2.2]{mahan-hyp}). We also restrict ourselves here to 
the case $n=2$ for ease of exposition. Let $\gamma_1, \gamma_2$ be two
tight geodesics of homologous non-separating curves stabilized by pseudo-Anosov homeomorphisms $\Phi_1, \Phi_2$ constructed as in Proposition \ref{qchierarchyin3m}.
Further assume without loss of generality that both $\gamma_1, \gamma_2$
pass through a common vertex $v$ (this can be arranged after conjugating $\Phi_2$ by a suitable element of $MCG(S)$ for instance). Note that 
$\gamma_1= \gamma_2$ and $\Phi_1 = \Phi_2$ are allowed in the construction below. Thus, the data of a single 3-manifold constructed as in Proposition \ref{qchierarchyin3m} above allows the construction below to go through. 
For $j=1,2$, we denote the vertex sequence of $\gamma_j$ by $v_{ij}$, $i \in \Z$.

{
Let $MCG(S,v)$ denote the subgroup of $MCG(S)$ stabilizing the curve $v$ on $S$ and preserving its co-orientation in $S$. Then, after choosing a representative curve for $v$, each element of  $MCG(S,v)$ has a representative fixing it pointwise. We assume for now that such choices have been made.}
We think of the elements
$\Psi \in MCG(S,v)$ as  rotations about $v$ in the curve graph $\ccs$ (cf.\ \cite[Chapter 5]{dgo}). { Given $L, R >0$, an element $\Psi \in MCG(S,v)$ is said to be 
an $(L,R)-${\bf large rotation}  about $v$ sending $\gamma_1$ to $ \gamma_2$ (see Definition \ref{def-largerot} where a more general definition is given) if $\Psi (\gamma_1)=\gamma_2$ and for any  distinct $u,z, w \in \gamma_1\cup \gamma_2  $,
with $d(u,z) \leq 2, d(w,z)\leq 2$, we have the following:
\begin{enumerate}
	\item  $d_{\CC(S\setminus z)} (u,w ) \geq L,$
	\item further subsurface projections (including annular projections) of any  geodesic in $\CC(S\setminus z)$ joining $u,w$  are at most $R$.
\end{enumerate} }

We are now in a position to state a special case of one of the main theorems (see Theorem \ref{maineg1}) of the paper. Informally, Theorem \ref{maineg} below says that a pair of pseudo-Anosov homeomorphisms
constructed as in Proposition \ref{qchierarchyin3m} having axes passing through a common vertex $v \in \ccs$ generate a convex cocompact free subgroup of $MCG(S)$ such that the resulting surface-by-free group is virtually special cubulable so long as the `angle' between the axes at $v$ is large (Theorem \ref{maineg} below specializes Theorem \ref{maineg1} to the case $n=2$, see Proposition~\ref{prop-lrrotexist}).
More precisely,

\begin{theorem}\label{maineg} There exist $L, R > 0$ such that if
	\begin{enumerate}
	\item  $\gamma_1, \gamma_2$ are two
	$L-$tight $R-$thick geodesics of homologous non-separating curves stabilized by pseudo-Anosov homeomorphisms $\Phi_1, \Phi_2$ constructed as in Proposition \ref{qchierarchyin3m},
	\item $\gamma_1, \gamma_2$
	pass through a common vertex $v$, 
	 {\item  $\Psi$  is  an $(L,R)-$ large rotation  about $v$ taking $\gamma_1$ to $\gamma_2$
	\item the fundamental domain of the $\Phi_i$ action on $\gamma_i$ has length at least 3,
	\item $\Psi \Phi_1 \Psi^{-1}=\Phi_2$}
	\end{enumerate}   then 
	the group $Q$ generated by $\Phi_1,  \Phi_2 $ is a free convex cocompact subgroup of rank 2 in $MCG(S)$. 
	For $$1 \to \pi_1(S) \to G \to Q \to 1$$ the induced exact sequence of hyperbolic groups, $G$ admits a quasiconvex hierarchy and hence is cubulable and virtually  special.
\end{theorem}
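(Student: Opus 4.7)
The plan is to verify the four hypotheses of Theorem~\ref{maineg0} for the group $Q=\langle \Phi_1,\Psi\Phi_2\Psi^{-1}\rangle$ and an appropriate tree $T_Q$, and then invoke that theorem. Write $\gamma_1':=\gamma_1$, $\gamma_2':=\Psi(\gamma_2)$, and $\Phi_1':=\Psi\Phi_2\Psi^{-1}$; both $\gamma_1'$ and $\gamma_2'$ are tight bi-infinite geodesics of $\ccs$ passing through the common vertex $v$, on which $\Phi_1$ (respectively $\Phi_1'$) acts by translation. Take $T_Q$ to be the subgraph of $\ccs$ obtained as the $Q$-orbit of $\gamma_1'\cup\gamma_2'$, with its natural combinatorial tree structure: an axis of $\langle \Phi_1\rangle$ and an axis of $\langle \Phi_1'\rangle$ cross at $v$, and at every $Q$-translate of $v$ a $Q$-translate of the other axis is attached. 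Once $T_Q\to\ccs$ is shown to be an isometric embedding, $Q$ acts freely on the tree $T_Q$, so $Q$ is free of rank $2$ and $T_Q/Q$ is finite (a single branch vertex with two loops), handing us condition (1) of Theorem~\ref{maineg0}.

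The technical core is the isometric embedding, which I would prove by a ping-pong argument driven by the Behrstock inequality and the Bounded Geodesic Image Theorem. Any combinatorial geodesic in $T_Q$ decomposes as a concatenation of subsegments, alternately contained in $Q$-translates of $\gamma_1'$ and $\gamma_2'$, with turns occurring at $Q$-translates of $v$. At each turn, the large rotation hypothesis guarantees that the inward and outward neighbors of $v$ are at distance $\ge L$ in $\CC(S\setminus v)$ and that every subsurface projection (including annular ones) between them is bounded by $R$. Combined with the large links property of the pseudo-Anosovs $\Phi_i$ built into Proposition~\ref{qchierarchyin3m}, the Behrstock inequality forces the annular projection at $v$ to dominate every competing projection along the concatenation, so a $\ccs$-geodesic between the endpoints must fellow-travel each turning vertex. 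Iterating over all turns and invoking the standard upgrade from unparameterized quasigeodesic to geodesic in the hyperbolic graph $\ccs$ yields the required isometric embedding, and simultaneously the free ping-pong conclusion for $Q$.

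Conditions (2), (3), and (4) of Theorem~\ref{maineg0} are then essentially inherited from the construction. The large links condition (2) along each axis is the defining property of the $\Phi_i$ from Proposition~\ref{qchierarchyin3m}, and at the branch vertices $q\cdot v$ it is exactly the large rotation hypothesis on $\Psi$. The homologous curves condition (3) requires $\Psi$ to send the common homology class of curves along $\gamma_2$ to that of curves along $\gamma_1$; this I would impose as an additional homological constraint on the rotation $\Psi$ (for example by taking $\Psi$ in the subgroup of $MCG(S,v)$ fixing the relevant class in $H_1(S)$), and it is consistent with the large rotation requirement. Condition (4), that subordinate hierarchy geodesics are bounded, is inherited from the renormalization $\overline{\psi_i^{p_i}}=\tw_{v_i}^{k_{p_i}}\circ\psi_i^{p_i}$ used in Proposition~\ref{qchierarchyin3m}: the Dehn-twist renormalization is precisely what kills the annular twisting that would otherwise blow up subordinate hierarchy paths.

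With all four hypotheses confirmed, Theorem~\ref{maineg0} yields convex cocompactness of $Q$, hence hyperbolicity of $G$, together with a quasiconvex hierarchy for $G$, and then Theorems~\ref{wise-hierarchy} and~\ref{spl} deliver the virtually special cubulable conclusion. The hard part, as sketched in the second paragraph, is the quantitative ping-pong: $L$ and $R$ must be chosen large enough relative to the hyperbolicity constant of $\ccs$, the Behrstock and Bounded Geodesic Image constants, and the large links constant from Proposition~\ref{qchierarchyin3m}, and one has to verify that no accumulation of bounded subsurface projections along a long concatenation can cumulatively defeat the annular projection bound at any individual turn.
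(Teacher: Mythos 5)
Your proposal is correct in outline and follows essentially the same route as the paper: the paper proves Theorem \ref{maineg} as the $n=2$ case of Theorem \ref{maineg1}, via Proposition \ref{ttforfn}, which forms the tree $T$ as the union of $Q$-translates of $\gamma_1\cup\Psi(\gamma_2)$, checks $L$-tightness at the branch vertices using the large-rotation hypothesis and $R$-thickness using Lemma \ref{lem-rthick} plus the large-rotation hypothesis, and then feeds the resulting $L$-tight $R$-thick tree into Theorem \ref{maintech} and Theorem \ref{eiqimpliescube}. The one place you diverge is that the paper black-boxes the isometric-embedding step as Proposition \ref{isometrictighttree} (proved in the companion paper \cite{mahan-hyp}), whereas you sketch the underlying BGIT/Behrstock ping-pong directly; that is a legitimate, more self-contained route, but be careful with one point of terminology: what forces a $\CC(S)$-geodesic to pass within distance $1$ of a turning vertex $v$ is the large projection of the endpoints to the \emph{non-annular} complementary subsurface $\CC(S\setminus v)$ (via BGIT), not the annular projection at $v$ --- indeed the renormalization and the large-rotation hypothesis make the annular projection at $v$ \emph{bounded}, so an argument literally built on annular domination would fail. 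Also, your extra homological constraint on $\Psi$ is unnecessary: since $\Psi$ fixes the curve $v$ and every curve on $\gamma_1$ and $\gamma_2$ is homologous to $v$, one gets $[\Psi(\sigma)]=\Psi_*[v]=[v]$ automatically, so condition (3) of Theorem \ref{maineg0} holds for free.
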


The hypothesis on the non-separating nature of the curves in  $\gamma_1, \gamma_2$ can be relaxed; however the modified version of Theorem \ref{maineg} becomes more technically involved to state. We refer the reader to Theorem \ref{maintech2} for the analog in this more general situation.

\subsection{Scheme of the paper}
The study of  embedded incompressible surfaces has a long history in the study of 3-manifolds \cite{hempel-book}. Tracks (see \cite{sageev-th, wise-cbms} for instance) are the natural generalization of these to arbitrary cell complexes and form the background and starting point of this paper. 
 Since our main motivation is to cubulate hyperbolic groups, we are interested primarily in quasiconvex tracks leading us naturally to the study of EIQ 
(essential incompressible quasiconvex)  tracks. An EIQ track in a 3-manifold is simply an embedded incompressible geometrically finite surface.
{ The main content of
Appendix~\ref{sec-app}, where tracks are dealt with, is a reduction theorem, Theorem \ref{eiqimpliescube}. It says that if a hyperbolic bundle $M$ over a finite graph $\GG$ with fiber a closed surface $S$ admits an EIQ track, then it admits a quasiconvex hierarchy in the sense of Theorem \ref{wise-hierarchy}. Theorem~\ref{eiqimpliescube} thus reduces the problem of cubulating {$\pi_1(M)$} to one of finding an EIQ track.
This part of the paper, written jointly with Jason Manning and Michah Sageev, forms the appendix  to the paper. The \emph{techniques}  
	of Appendix~\ref{sec-app} are largely orthogonal to those used in
	the main body of the paper.
The rest of the paper crucially uses the output, i.e.\ Theorem \ref{eiqimpliescube},
and proceeds to construct an EIQ track using  techniques that come largely from the model manifold technology of \cite{minsky-elc1}.}

 Section \ref{sec-tt} provides the background, where we start by recalling the notions of
 graphs of spaces, surface bundles over graphs, and metric bundles.
We also state Theorem~\ref{eiqimpliescube} from the Appendix~\ref{sec-app} for convenience of the reader.
 We then recall some of the essential features of the geometry of a hyperbolic bundle $M$ over a finite graph $\GG$ from \cite{mahan-hyp}.  An essential tool that is recalled in Section \ref{sec-tightree} is the notion of a tight tree $T$ of non-separating curves in $\ccs$ generalizing the notion of a tight geodesic. We also equip  $M_T = S \times T$ with a metric $\dw$  using the model geometry of doubly degenerate hyperbolic 3-manifolds \cite{minsky-elc1,minsky-elc2,mahan-hyp}. Further, the construction of an auxiliary `partially electrified' pseudo-metric $\dt$  on $M_T$ is also recalled from \cite{mahan-hyp}.

{
Section \ref{sec-geolts} deals with an essential technical tool of this paper: geometric limits. The section culminates in a  quasiconvexity result (Lemma \ref{lem-cutsurfacegf}) for certain subsurfaces of the fiber. This is the main tool for proving a uniform quasiconvexity result in Section \ref{sec-treads}.}

The tight tree  $T$ is then used in Section \ref{sec-stairstep} to construct a track $\ttt$ in $M_T$. The track we construct
 is of a special kind--a  `stairstep'. This is fairly easy to describe in $S \times [0,n]$: it consists of essential  horizontal subsurfaces  called {\it treads}, denoted $\trvw$, in $S\times \{ i \}$ with boundary consisting of  curves $v\times \{ i \}, w\times \{ i \}$ connected together by vertical annuli  $v\times [i, i+1], w\times [i-1, i]$ called {\it risers} corresponding to the curves $v$ and $w$. The sequence of simple closed curves thus obtained on $S$ is required to be a tight geodesic in the curve graph $\ccs$. Section \ref{sec-stairstep} concludes with the statement of the main technical theorem \ref{maintech} 
of the paper, whose proof is deferred to  the later sections.

Section \ref{sec-construction} then applies Theorem \ref{maintech} to construct the main examples of the paper (Theorems \ref{maineg0} and \ref{maineg}) already described.

 The next two sections prove that the track $\ttt$ is $\pi_1$--injective in $M_T$ and that any
elevation $\tttt$ to the universal cover $\widetilde{M_T}$ is quasiconvex with respect to either the $\dw$ metric or the $\dt$ pseudo-metric.
Gromov-hyperbolicity (with constant of hyperbolicity depending only on genus of $S$, the maximal valence of $T$, and a parameter $R$ as in Theorem \ref{maineg})
of $\tmtdt$ was established in \cite{mahan-hyp}. Quasiconvexity of treads (with constant having the same dependence above) is established in Section \ref{sec-treads} using the structure of Cannon-Thurston maps.    In Section \ref{sec-qctrack},  the treads are pieced together via risers
using a version of the local-to-global principle for quasigeodesics in $\delta-$hyperbolic spaces to complete the proof of Theorem \ref{maintech}. Section \ref{sec-genlzns} generalizes the main theorem by
allowing 
tight trees of homologous separating curves.

\section{Graphs of spaces, tight trees and models}\label{sec-tt}

\subsection{Graphs of Spaces, bundles, tracks}\label{sec-graphs-of-spaces}

For all of the discussion below, spaces are assumed to be connected and path connected.
A \defstyle{graph} $\GG$ we take to be a tuple $(V,E,i,t)$ where $V$ and $E$ are sets (the \defstyle{vertex set} and \defstyle{edge set}, respectively), and $i: E\to V$ and $t: E\to V$ give the \defstyle{initial} and \defstyle{terminal} vertex of each edge.  (Strictly speaking this is a \emph{directed} graph.)

A graph of spaces is constructed from the following data \cite{scott-wall}:

\begin{itemize}
	\item A connected graph $\GG=(V,E,i,t)$;
	\item a  \defstyle{vertex space} $X_v$ for each $v\in V$ and an \defstyle{edge  space} $X_e$ for each $e\in E$; and
	\item continuous maps $f_e^-: X_e\to X_{i(e)}$ and $f_e^+:X_e\to X_{t(e)}$ for each $e\in E$.
\end{itemize}

Given this data, we construct a space
\[ X_\GG = \bigcup_v X_v \cup \bigcup_e X_e\times I/\sim, \]
where $(x,0)\sim f_e^-(x)$ and $(x,1)\sim f_e^+(x)$ for each $x\in X_e$, $e\in E$.  We say $X_\GG$ is a \defstyle{graph of spaces}.  We say a homeomorphism $X\to X_\GG$ is a \defstyle{graph of spaces structure on $X$}.

\begin{eg}
	The trivial example in which every vertex and edge space is a single point yields a $1$--complex, the geometric realization of $\GG$.  We abuse notation in the sequel and refer to this $1$--complex also as $\GG$.
\end{eg}

When the maps $f_e^\pm$ are all $\pi_1$-injective then the fundamental group of the space $X_\GG$ inherits a graph of groups structure.  In particular, when the graph has a single edge, we obtain a decomposition of $\pi_1(X_\GG)$ as a free product with amalgamation or as an HNN-extension, depending on whether or not the graph is an edge or a loop. 
(See \cite{scott-wall} for more on graphs of spaces and graphs of groups.)

Conversely, if we are given a space $X$ and a subspace $Y\subset X$ such that $Y$ has a closed neighborhood $N$ homeomorphic to $Y\times [0,1]$, then we obtain a graph of spaces structure for $X$, namely the components of $Y$ as the edge spaces and the components of $X\setminus \mathring{N}$ as the vertex spaces, where $\mathring{N}\cong Y\times (0,1)$. \\

\noindent {\bf Surface bundles over a graph}\\
In this paper the main objects of study will be surface bundles over graphs.  Let $\GG$ be a connected graph, thought of as a $1$--complex, and consider a bundle $E\to \GG$ with fiber $S$ a surface.  Then it is easy to see that $E$ has the structure of a graph of spaces (with graph $\GG$) where every edge and vertex space is homeomorphic to $S$ and every edge-to-vertex map is a homeomorphism.

In particular, {for $\GG$ finite} we can describe the fundamental group of $E$ as follows.  Let {$\TT\subset \GG$ be a (necessarily finite)} maximal tree.  In the graph of spaces structure coming from the bundle, we may assume that for any edge {$e\subset \TT$}, the gluing maps $f_e^\pm$ are the identity map on $S$.  Let $e_1,\ldots,e_n$ be the edges in {$\GG\setminus \TT$}.  For each $i\in \{1,\ldots,n\}$, let $f_i = f_{e_i}^+$, and let $\phi_i = (f_i)_*: \pi_1S\to \pi_1S$.  We can describe $\pi_1E$ as a multiple HNN extension  {	\[ \pi_1E \cong \langle \pi_1S,t_1,\ldots,t_n\mid t_i^{-1} st_i=\phi_i(s),\forall s\in \pi_1S, i\in\{1,\ldots,n\}\rangle.\]}

We are particularly interested in the case that  {$G=\pi_1E$} is Gromov hyperbolic.  It is a theorem of Farb and Mosher \cite{farb-coco} that such groups exist.
Our approach to cubulating such $\pi_1E$ will be to find a quasiconvex subgroup of $G$ over which $G$ splits as an amalgam or HNN extension.
The fiber group $\pi_1(S)$ is normal in $\pi_1E$, in particular not quasiconvex, so we will need to look at other ways of expressing $E$ as a graph of spaces.\\

\noindent {\bf Tracks and a reduction theorem}\\ We refer the reader to
 Appendix ~\ref{sec-app} for background material on tracks, especially the notion of \emph{essential, incompressible quasiconvex} (EIQ) tracks (Definition~\ref{def-eiq}). We also state below, for convenience
of the reader, the main output of Appendix ~\ref{sec-app}:

\begin{theorem*}[Theorem~\ref{eiqimpliescube}]
	Let $M$ be a closed surface bundle over a finite graph $\GG$, so that $\pi_1M$ is hyperbolic. Suppose that $M$ contains an EIQ freely indecomposable surface bundle track $\TT$. Then $\pi_1M$ {admits a quasiconvex hierarchy and is therefore} cubulable.
\end{theorem*}

Theorem~\ref{eiqimpliescube} says that in order to cubulate a hyperbolic surface bundle over a graph, it suffices to 
construct an EIQ track.\\

\noindent {\bf Metric surface bundles}\\
If $M$ is a  surface bundle over a graph $\GG$ with fiber $S$, then 
the cover of $M$ corresponding to $\pi_1(S)$ is again  a   surface bundle $M_T$ over a tree, $T$, where $T=\til \GG$ is the universal cover of $\GG$. 

We shall also have need to equip such surface bundles over graphs with a metric structure.  Here, the underlying graph $\GG$ or tree $T$ will be a metric tree.

\begin{defn}\label{def-mbdl} Let $(X,d)$ be a path-metric space equipped with
	the structure of a bundle  $P:X \to \GG$  over a graph $\GG$  with fiber a surface $S$ (here we allow $\GG$ to be a tree $T$). Then  $P:X \to \GG$  will be called a \defstyle{metric surface bundle} if 
	\begin{enumerate}
		\item  There exists a metric $h$ on $S$ and $\lambda\geq 1$ such that for all $x \in \GG$,
		$P^{-1}(x) =S_x$ equipped with the induced path-metric induced from $(X,d)$ is $\lambda-$bi-Lipschitz to $(S,h)$.
		\item  Further,  for any isometrically embedded interval $I \subset \GG$, with
		$I=[0,1]$,
		$P^{-1}(I)$ is $\lambda-$bi-Lipschitz to $(S,h) \times [0,1]$ by a $\lambda-$bi-Lipschitz fiber-preserving homeomorphism that is $\lambda-$bi-Lipschitz on the fibers.
	\end{enumerate}
\end{defn}

Definition \ref{def-mbdl} above is a special case of the more general notion of metric bundles introduced in \cite{mahan-sardar}.\\

\subsection{Tight trees of non-separating curves}\label{sec-tightree}
 Let $M \to \GG$ be a surface bundle over a  graph  as in Section \ref{sec-graphs-of-spaces} where the edge and vertex spaces are all homeomorphic to a closed surface $S$. Then the  cover  of $M$ corresponding to $\pi_1(S)$ is again a surface bundle over a  graph with base graph the universal cover $\til \GG$ of $\GG$. In what follows in this section, we shall  denote the tree $\til \GG$  by $T$. {Note that unlike the tree $\TT$ in Section \ref{sec-graphs-of-spaces}, $T$ is infinite.}

The \defstyle{curve graph} $\CC(S)$ of an orientable finite-type surface $S$ is a graph whose vertices consist of free homotopy classes of 
simple closed curves and edges consist of pairs of distinct free homotopy classes of 
simple closed curves that can be realized by curves having minimal number of intersection points (2 for $S_{0,4}$, 1 for $S_{1,1}$ and 0 for all other surfaces of negative Euler characteristic). A fundamental theorem of Masur-Minsky\cite{masur-minsky} asserts that  $\CC(S)$ is Gromov-hyperbolic.
{In fact, Aougab \cite{aougab}, Bowditch \cite{bowditch}, Clay-Rafi-Schleimer \cite{crs},} and Hensel-Przytycki-Webb \cite{slim-unicorns} establish that all curve graphs are uniformly hyperbolic.  The Gromov boundary $\partial \CC(S)$ may be identified with the space   $\EL(S)$ of ending laminations \cite{klarreich-el}.
We shall be interested in surface bundles coming from trees $T$ embedded in $\CC(S)$. 
We will now briefly recall from \cite{mahan-hyp} the construction of a geometric structure on such  surface bundles. The ingredients of this construction are as follows:
\begin{enumerate}
	\item A sufficient condition to ensure an isometric embedding of $T$ into $\CC(S)$.
	\item The construction of an auxiliary metric tree $\but$ from $T$ where each vertex $v$ is replaced by a finite {\it metric} tree $T_v$  called the tree-link of $v$ (see Definition \ref{def-treelink}).
	We refer to $\but$ as the blown-up tree. 
	\item The construction of  a surface bundle  $M_T$  over $\but$. 
	The metric tree $\but$ captures the {\it geometry} of the base space of the bundle $M_T$, while the tree $T$ only captures the  topological features.
	\item An effective construction of a metric on $M_T$  such that the universal cover $\til M_T$ is  $\delta-$hyperbolic, with $\delta$ depending only on some
	properties of $T$ (see Theorems \ref{model-str} and \ref{mainprel} below for precise statements).
\end{enumerate}

 We refer the reader to \cite{masur-minsky2} for   details on subsurface projections  (the necessary material is summarized in  \cite[Section 2.1]{mahan-hyp}).

\begin{defn}\label{def-tighttree}   \cite[Section 2.2]{mahan-hyp} For any $L \geq 1$,
	an {\bf $L-$tight tree of non-separating curves} in the curve graph $\CC(S)$ consists of a  simplicial tree $T$ of bounded valence and a simplicial map $i: T \to \CC (S)$ such that for every vertex $v$ of $T$ 
	\begin{enumerate}
	\item {$i(v)$ is non-separating,}
	\item for every pair of distinct vertices $u \neq w$ adjacent to $v$ in $T$,
	$$d_{\CC(S \setminus i(v))} (i(u), i(w)) \geq L.$$
	\end{enumerate}

	An $L-$tight tree of non-separating curves for some $L\geq 3$ will simply be called a tight tree of non-separating curves. Such a tree is called a 
	tight tree of homologous non-separating curves if, further, the curves $\{i(v):v \in T\}$ are homologous {(up to orientation)}.
\end{defn}

{We shall need the following  condition  guaranteeing that tight trees give isometric embeddings.}

\begin{prop}\label{isometrictighttree} \cite[Proposition 2.12]{mahan-hyp}
Let $S$ be a closed surface of genus at least $2$.	There exists $L\geq 3$, such that the following holds.  Let $i: T\to \CC(S)$ define an $L$--tight tree of non-separating curves.  Then $i$ is an isometric embedding.
\end{prop}

Chris Leininger told us a proof of the main technical Lemma that went into a proof of Proposition \ref{isometrictighttree}. A more general version (Proposition \ref{isometrictighttree-sep}) due to Ken Bromberg will be given  later.

\subsection{Topological building blocks from links: non-separating curves} \label{sec-topbb} We recall the structure of building blocks from  \cite[Section 2.3]{mahan-hyp}.
{The {\bf weak hull} of a subset $Y$ of a Gromov-hyperbolic space $(X,d)$ consists of the union of all geodesics in $X$ joining pairs of points in $Y$.}  Let $i: T \to \CC(S)$ be  a tight tree of non-separating curves and let  $v$ be a vertex of $T$. The link of $v$ in $T$  is denoted as $lk(v)$.
Let $S_v = S \setminus i(v)$. Then $i(lk(v))$ consists of a uniformly bounded number (depending only on the maximal valence of $T$) of vertices in $\CC(S_v)$. Hence the weak  hull 
$CH(i(lk(v)) $ of 
$i(lk(v))$ in  $\CC(S_v)$ 
admits a uniform approximating tree $T_v$ { (see
\cite{cdp} Chapter 8 Theorem 1 and \cite[p. 155]{gromov-hyp})}. More precisely,
\begin{lemma}\label{lem-tlink}
	Let  $i: T \to \CC(S)$ be a tight tree of non-separating curves.
	There exists $k\geq 1$, depending only on the valence of vertices of $T$, such that for all $v \in T$ there exists a finite metric tree $(T_v,d_{T_v})$  and a surjective $(1,k)-$quasi-isometry
$$\P_v :CH(i(lk(v))  \to T_v.$$ 
	Further, $\P_v$ maps the vertices of $i(lk(v))$ to the terminal vertices (leaves) of $T_v$ such that for any pair of  vertices $x, y \in i(lk(v)$,
	$$d_{T_v} (\P_v(x),\P_v(y)) \leq d_{\CC (S_v)} (x,y) \leq 
	d_{T_v} (\P_v(x),\P_v(y)) + k.$$
\end{lemma}

{
In the case of interest in this paper, $T$ will be an $L-$tight tree with $L \geq k+1$. This will ensure that  the distance between any pair of terminal vertices/leaves of $T_v$ is at least one.}

\begin{defn} \label{def-treelink}
	The finite tree $T_v$ is called the {\bf tree-link} of $v$.
\end{defn}

\begin{defn}\label{def-topbb} For $i: T \to \CC(S)$  a tight tree of non-separating curves and   $v$  any vertex of $T$,
	the {\bf topological building block corresponding to $v$ } is   \[M_v =S \times T_v.\]
	
	The block
	$M_v$ contains a distinguished subcomplex $i(v)\times  T_v$ denoted as $\RR_v$ which we call the {\bf Margulis riser} in $M_v$ or  the Margulis riser corresponding to $v$. 
\end{defn}
{
	Note that $T$ in Definition~\ref{def-topbb} above need not be regular.}
{Concretely, we shall see in Theorem \ref{model-str} below that
		$M_v$ carries a path metric, where $S$ is equipped with a fixed auxiliary hyperbolic metric and $i(v)$ is realized as a geodesic in this metric.
	For now, the reader may assume that such an auxiliary metric on $S$ has been fixed, and 
        curves are identified with their geodesic representatives in this metric.}

Note that in the definition of the  Margulis riser,
$i(v)$ is identified with a non-separating simple closed curve on $S$. The Margulis riser will take the place of Margulis tubes in doubly degenerate hyperbolic 3-manifolds. 
See also Definitions \ref{def-stairstep} and \ref{def-tstairstep} below clarifying the use of the word ``riser.''

Let $i: T \to \CC(S)$ be  a tight tree of non-separating curves.
The {\bf blow-up} $\BU (T)$ of $T$ is a {\it metric tree} obtained from $T$ by replacing the $\half-$neighborhood of each $v \in T$ by the tree-link $T_v$ (see \cite[Section 2.3]{mahan-hyp} for a more detailed description).
{The condition $L \geq k+1$ after Lemma \ref{lem-tlink} guarantees that any two terminal vertices/leaves of $\BU (T)$ are at a distance at least one from each other.}

Assembling the topological building blocks $M_v$   according to the combinatorics of $\BU (T)$, we get the following: 

\begin{defn}\label{def-topmodeltree}
	The {\bf topological model corresponding to a tight tree $T$ } of non-separating curves is \[M_T =S \times \BU(T).\]
	$\Pi: M_T  \to \BU(T)$ will denote the natural projection giving $M_T$ the structure of a  surface bundle over the tree $\but$. 
\end{defn}

For every $v$, the tree-link $T_v$ occurs as a natural subtree of $\BU(T)$ and $M_v$ occurs naturally as the induced bundle $\Pi: M_v\to T_v$ after identifying  $\Pi^{-1}(T_v)$ with $M_v$.  The intersection of the tree-links $T_v, T_w \subset \BU(T)$ of adjacent vertices $v,w$ of $T$ will be called a {\bf mid-point vertex} $vw$. The pre-image $\Pi^{-1}(vw)$  of  a mid-point vertex will be denoted as  $S_{vw}$ and referred to  as a {\bf mid-surface}.

\subsection{Model geometry}\label{sec-bb} We now recall from \cite[Section 3 ]{mahan-hyp} the essential aspects of the geometry of $M_T$. To do this we need an extra hypothesis.

\begin{defn}\label{def-rthick}
	An $L-$tight tree $i: T\to  \CC (S)$ of non-separating curves is said to be \defstyle{$R-$thick} if
	for any vertices $u,v, w$ of $T$ and any proper essential subsurface $W$ of $S \setminus  i(v)$ (including essential annuli), \[ d_W (i(u), i(w)) \leq R,\] where $d_W(\cdot\ , \cdot)$ denotes distance between subsurface projections onto $W$.
\end{defn}

{For the rest of this section, $T$ will refer to an $L-$tight $R-$thick tree.}

\begin{rmk}\label{rmk-local}
	The condition on $R-$thickness is really a local condition. By the Bounded Geodesic Image Theorem \cite{masur-minsky2}, it follows that 
	there exists a universal constant $M$ such that if {$d_W (i(u), i(w))\leq R-M$ whenever $u, w$ are within distance 2 from  $v$, then the conclusion of Definition \ref{def-rthick} holds for all triples $u,v, w$.}
\end{rmk}

Here, as elsewhere in this paper, a {\bf Margulis tube} in a hyperbolic 3-manifold $N$ will refer to a maximal solid torus $\T \subset N$, with $\inj_x \leq \ep_M$ for all $x \in \T$, where $\ep_M$ is a Margulis constant for $\Hyp^3$ fixed  for the rest of the paper. In particular, all Margulis tubes are closed and embedded.
For $l$ a bi-infinite geodesic in $T$, let $l_\pm$ denote the ending laminations given by the ideal end-points of $i(l) \subset \CC(S)$ and let $\VV(l)$ denote the vertices of $T$ occurring along $l$. Let $N_l$ denote the doubly degenerate hyperbolic 3-manifold with ending laminations  $l_\pm$. {Note that $N_l$ is unique by the ending lamination theorem \cite{minsky-elc2}.}
Then $l$ 
gives a bi-infinite geodesic in $\ccs$ which is an $L-$tight $R-$thick tree with underlying space $\R$.

\begin{defn}\label{def-ddsplgeoltightrthick}
	The manifold $N_l$ will be called a doubly degenerate manifold of {\bf special split geometry corresponding to the $L-$tight $R-$thick tree $l$}.
\end{defn}
The reason for the terminology in Definition \ref{def-ddsplgeoltightrthick} will be explained in Proposition \ref{prop-splsplit}.
For $L$ large enough, if $T$ is $L-$tight, each vertex $v\in \VV(l)$ gives a Margulis tube in $N_l$ \cite[Lemma 3.7]{mahan-hyp}. 
Let $\T_v$ denote the Margulis tube in $N_l$ corresponding to $i(v)$ and $N_l^0 = N_l \setminus \bigcup_{v \in \VV(l)} \T_v$.
Let $\BU(l)$ denote the bi-infinite geodesic in $\but$ after blowing up $l$ in $T$. 
Also let $M_l$ denote the bundle over $\BU(l)$ induced from $\Pi: M_T \to \but$. {Note that for $v \neq w \in \VV(l)$, $\RR_v \cap \RR_w = \emptyset$.}
Let $M_l^0 = M_l \setminus \bigcup_{v \in \VV(l)} \RR_v$ denote the complement of the risers in $M_l$. 

\begin{theorem}{\cite[Theorem 3.35]{mahan-hyp}} \label{model-str} Given $R \geq 0$ and $V_0 \in \natls$, there exist $K \geq 1$ such that for an $L-$tight $R-$thick tree with $L \geq 3$ and valence bounded by $V_0$,  there exists a  metric $\dw$ on $M_T$ such that $\Pi: M_T \to \but$ is a  metric bundle of surfaces (cf.\ Definition \ref{def-mbdl})  over the metric tree $\but$ satisfying the following:
	\begin{enumerate}
		\item The induced metric for every Margulis riser $\RR_v$ is the metric product $S^1_e \times T_v$, where $S^1_e$ is a unit circle. 
		\item  For any bi-infinite geodesic $l$ in $T$, $N_l^0$ and $M_l^0$ are $K-$bi-Lipschitz homeomorphic by a homeomorphism that extends to their path-metric completions.
		\item Further, if there exists a subgroup $Q$ of $MCG(S)$ acting  geometrically on $i(T)$, then this action can be lifted to an isometric fiber-preserving isometric action of $Q$ on $(M_T,\dw)$.
	\end{enumerate}
The  bi-Lipschitz constant  (in Definition \ref{def-mbdl}) of the metric bundle $\Pi: M_T \to \but$ is also at most $K$.
\end{theorem}  

{The interested reader is referred to \cite[Definition 3.15]{mahan-hyp} and \cite[p. 1198]{mahan-hyp} for further details about the metric $\dw$.
We shall return to it in Section \ref{sec-ghlts}.}
Henceforth we shall assume that a bi-Lipschitz homeomorphism as in Theorem \ref{model-str} between 
$N_l^0$ and $M_l^0$ has been fixed.
Theorem \ref{model-str} establishes a bijective correspondence between the risers $R_v \cap M_l$ occurring along the geodesic $l \subset \ccs$ and the Margulis tubes $\T_v$ in $N_l$.
We shall describe  features of the hyperbolic geometry of the special split geometry manifold $N_l$ in Proposition \ref{prop-splsplit} below. For now, we dwell instead on the geometry of $M_l$.
See Figure \ref{schematic} below for a schematic representation of $M_l$ in the special case that $T= \R$ with vertices at $\Z$. Also see Figure \ref{schematic2} for a description of the geometry of individual blocks. The edges of $\BU(l)$ are at least $L-k$ in length, where $k$ depends only on the maximal valence of a vertex of $T$ (see Lemma \ref{lem-tlink}). This is where the parameter $L$ (for an $L-$tight tree) shows up in the model geometry.

\begin{figure}[H]
	
	\centering
	
	\includegraphics[height=6cm]{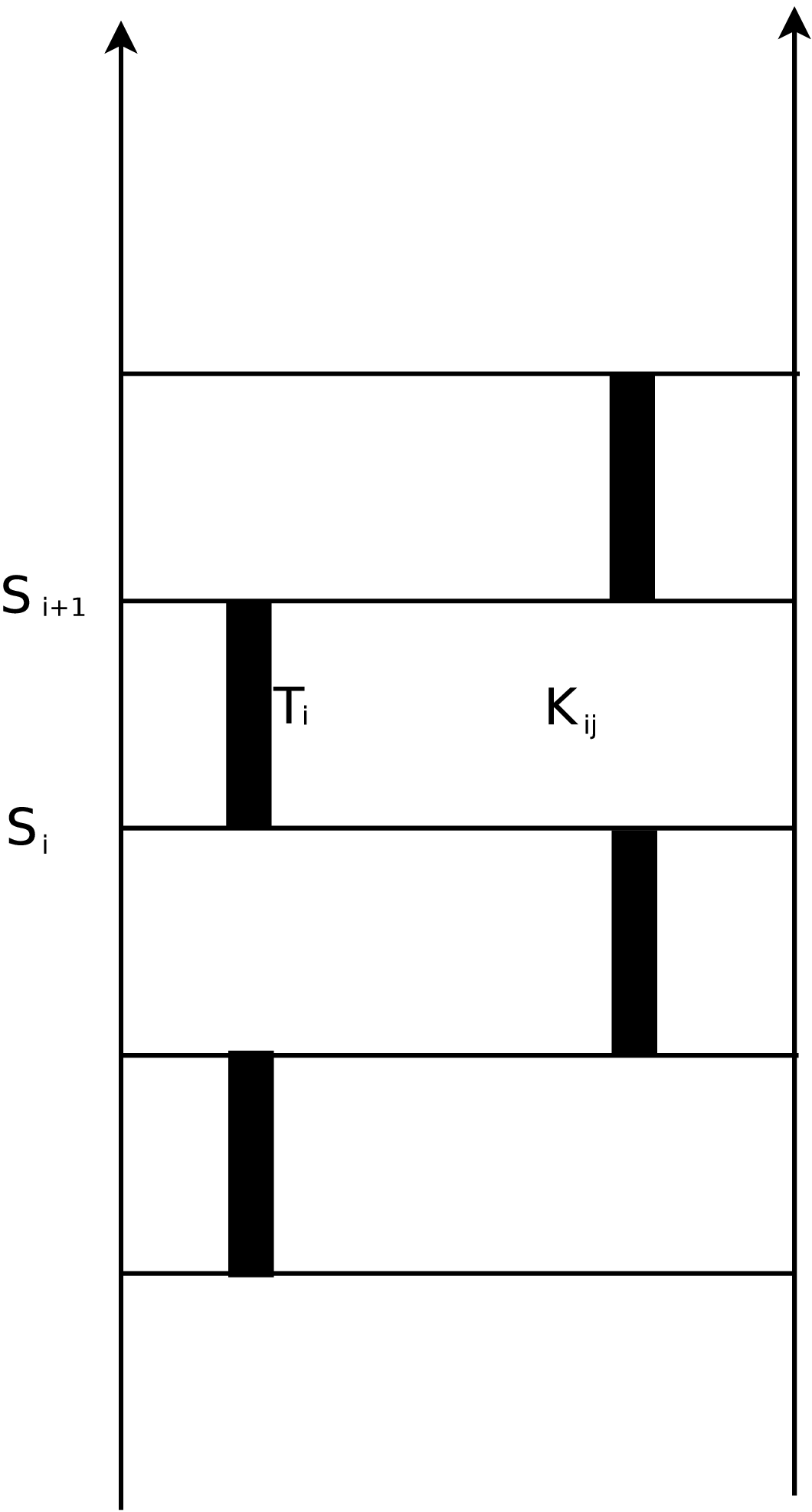}

	\bigskip
	
	\caption{Model geometry for $T$ a line}
	\label{schematic}
\end{figure}

We draw the reader's attention to the fact
that the topological building block  $M_i$ between $S_i$ and $S_{i+1}$ is a topological product (corresponding to the vertex $i$ on the underlying tree $T= l= \R$ with vertices at $\Z$). Further, each such block contains a unique Margulis riser homeomorphic to $S^1 \times I$. Theorem \ref{model-str} above shows that the complement $M_l^0$ of the Margulis risers in $M_l$ and the complement $N_l^0$ of the Margulis tubes in the doubly degenerate hyperbolic manifold $N_l$ are bi-Lipschitz homeomorphic. The place of building blocks $M_i$ in $M_l$ will be taken by   split blocks in $N_l$ (see Definition \ref{def-splitblock} and Proposition \ref{prop-splsplit} below). { For the next Lemma, we extract out the property of $R-$thickness from Definition \ref{def-rthick}.}

{
\begin{defn}\label{def-frthick}
 We shall say that a tree
$T$ equipped with a qi-embedding $i: T \to \CC(S)$ is \defstyle{fully  $R-$thick} if
for any vertices $u, w$ of $T$ and any proper essential subsurface $W$ of $S $ (including essential annuli), \[ d_W (i(u), i(w)) \leq R,\] where $d_W(\cdot\ , \cdot)$ denotes distance between subsurface projections onto $W$.
\end{defn}
}
{
The following was established by Minsky \cite{minsky-bddgeo} in the case of a single pseudo-Anosov and Kent-Leininger \cite{kl-coco} in general.}

\begin{lemma}\label{lem-pA}
	For a surface $S$, let $\phi$ be a pseudo-Anosov homeomorphism. Then there exists $R>0$ such that any  geodesic $\gamma$ in $\ccs$ preserved 
	by $\phi$ is {fully $R-$thick.}
	
	More generally, let $\phi_1, \cdots, \phi_k$ freely generate a free convex cocompact subgroup $Q=F_k$.
		There exists $R$ such that the following holds. Let $\partial Q \subset \partial \CC(S)$ denote
	the associated embedding of  $\partial Q$ in the boundary of the curve complex.
	Let $\HH(\partial Q)$ denote the weak hull of $\partial Q$ in $\CC(S)$.
{Then any bi-infinite geodesic  $\gamma \subset \HH(\partial Q)$ is fully $R-$thick.}
\end{lemma}

\begin{proof}{ 
    Let $\gamma^\pm \subset \partial Q$. Then, \cite[Theorem 7.4]{kl-coco} shows that there exists $D$, independent of $\gamma$ such that subsurface projections $d_W(\gamma^+, \gamma^-) \leq D$ for all proper subsurfaces $W$ of $S$. 
    According to the distance formula of Masur-Minsky (\cite[Lemma 6.2]{masur-minsky2} and \cite[Theorem 6.12]{masur-minsky2}) there exists $D_1$ (independent of $\gamma$),
    such that for any proper essential subsurface
    $W$ of $S$, the length of any geodesic in the hierarchy
    joining  $\gamma^+, \gamma^-$ and supported in $W$ is at most $D+D_1$.}
\end{proof}
{
Kent and Leininger  (see the discussion towards the end of p. 1275 of \cite{kl-coco})  show that for $Q$ convex cocompact,
there exists a unique associated embedding of  $\partial Q$ in the boundary of $\Teich(S)$.
As a consequence of Lemma \ref{lem-pA}, we have:}

\begin{cor}\label{cor-pA}
	{Let $T_Q \subset \ccs$ be a tree preserved by $Q$.
		Then, for any $R>0$,  there exists $\ep >0$ such that if $T_Q$ is  fully $R-$thick, then the following holds.
		Let   $\partial_T Q \subset \partial \Teich(S)$ denote 	the associated embedding of  $\partial Q$ in the boundary of $\Teich(S)$.  Let $\HH_T(\partial Q)$ denote the weak hull of $\partial_T Q$ in $ \Teich(S)$. Then for every $x \in \HH_T(\partial Q)$, the injectivity radius of $S$ equipped with the hyperbolic metric corresponding to $x$ is at least $\ep$.}
	
\end{cor}

\begin{proof}
{
	As noted in \cite[Theorem 2.7]{kl-coco}, Rafi's proof of \cite[Theorem 1.5]{rafi} shows that uniform boundedness of subsurface projections is equivalent to thickness of any bi-infinite Teichm\"uller geodesic $l \subset \HH_T(\partial Q)$.}
\end{proof}

{
 The lower bound on injectivity radius explains the terminology fully $R-$thick. It is worth pointing out to the reader that in Definition \ref{def-rthick}, all curves \emph{other than the
ones on the $L-$tight tree $T$} are required to be thick in the associated model manifold \cite[Section 9]{minsky-elc1}, whereas in 
Definition \ref{def-frthick}, all curves \emph{without exception} are required to be thick in the associated model manifold.}

\subsubsection{Tube electrified metric}\label{sec-tel} It would  be nice if $\tmtdw$ were $\delta-$hyperbolic with a constant $\delta$ independent of $L$. This is simply not true as the Margulis risers $\RR_v$ lift to flat strips of the form $\R \times T_v$ and so the constant $\delta$ depends on the length of the largest isometrically embedded interval in $T_v$. There are a couple of ways to get around it. One way is to use relative hyperbolicity \cite{farb-relhyp}. We shall use an alternate approach using pseudometrics and partial electrification  \cite{mahan-pal} that preserves the $T_v-$direction in $\R \times T_v$.

An auxiliary pseudometric on $M_T$ was defined in \cite{mahan-hyp} as follows.
Equip each Margulis riser $\RR_v= S^1 \times T_v$ with a product pseudometric that is zero on the first factor $S^1$ and agrees with the metric on $T_v$ on the second. 
Let $(S^1 \times T_v, d_0)$ denote the resulting pseudometric.

\begin{defn}\label{tubeel}\cite[p. 39]{mahan-split}
	The {\bf tube-electrified metric} $d_{te}$  on $M_v$ is  the path-pseudometric  defined as follows:\\
		Paths that lie entirely within $M_v \setminus \RR_v$ are assigned their
${d_{weld}}$-length.  Paths that lie  entirely within some $\RR_v$ are assigned their  $d_0$-length. The distance between any two points is now defined to be
the infimum of lengths of paths given as a concatenation of subpaths lying either entirely $\RR_v$ or entirely outside $\RR_v$ except for end-points.
	
	The {\bf tube-electrified metric} $d_{te}$  on $M_T$ is defined to be the path-pseudometric  that
	agrees with $d_{te}$  on $M_v$ for every $v$.
\end{defn}

The lift of the metric $\dw$ (resp. $\dt$) on $(M_T,\dw)$ (resp. $(M_T,\dt)$) to the universal cover $\til M_T$  is also denoted by $\dw$ (resp. $\dt$).
Let $\tmr$ denote the collection of lifts of Margulis risers to $\til M_T$.
The main theorem of \cite{mahan-hyp} states:

\begin{theorem}\label{mainprel} Given $R > 0$, $V_0 \in \natls$ there exists $\delta_0, L_0$ such that the following holds. Let $i:T \to \CC(S)$ be an $L-$tight  $R-$thick tight tree of non-separating curves with $L\geq L_0$ such that the valence of any vertex of $T$ is at most $V_0$. Then
	\begin{enumerate}
	\item	$(\til M_T,\dt)$ is $\delta_0-$hyperbolic.
	\item $\tmtdw$ is strongly hyperbolic relative to the collection 
	$\tmr$.
	\end{enumerate}
\end{theorem}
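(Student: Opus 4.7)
The plan is to prove the two conclusions in sequence: first establish $\delta_0$-hyperbolicity of $(\til M_T,\dt)$ via a reduction to known hyperbolic geometry of doubly degenerate Kleinian surface groups, then derive strong relative hyperbolicity of $\tmtdw$ by recognizing $\dt$ as a partial electrification of $\dw$ along the Margulis risers.

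For part (1), I would proceed in two stages. First, localize to a single bi-infinite geodesic $l \subset T$. By Theorem \ref{model-str}, the complement $M_l^0$ is uniformly $K$-bi-Lipschitz to $N_l^0$, where $N_l$ is the doubly degenerate hyperbolic $3$-manifold of special split geometry associated to $l$. Its universal cover is $\Hyp^3$, so electrifying the Margulis tubes $\T_v \subset N_l$ (which corresponds precisely to collapsing the $S^1$-factor of each riser $\RR_v$) yields a uniformly hyperbolic space; the uniformity of the constant depends only on $R$ and $V_0$ via the Bounded Geodesic Image Theorem applied to annular and non-annular subsurface projections along $l$. Transferring through the bi-Lipschitz correspondence gives $(\til M_l, \dt)$ uniformly $\delta_1$-hyperbolic. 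The second stage is to globalize over $\BU(T)$: the map $\Pi : M_T \to \BU(T)$ is a metric bundle of surfaces over a metric tree, so I would invoke a combination theorem for metric bundles over trees (in the spirit of \cite{mahan-sardar}, or a direct Bestvina--Feighn flaring argument). The required flaring and cone-bounded hypotheses for this combination theorem follow from the already-established $\delta_1$-hyperbolicity of $(\til M_l, \dt)$ for each bi-infinite $l$, combined with hyperbolicity of the surface fibers. Taking $L_0$ large forces the edges of $\BU(T)$ to be long enough that the local flaring estimates propagate uniformly across branch vertices of $T$.

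For part (2), observe that $\dt$ is obtained from $\dw$ by assigning $0$-length to arcs in the $S^1$-direction inside each Margulis riser; equivalently, $\dt$ is the result of electrifying (in the sense of \cite{mahan-pal}) the $S^1$-factor of each lift $\til \RR_v \in \tmr$. Each $\til \RR_v$ is itself quasi-isometric to $\R \times T_v$ inside $\tmtdw$, and these lifts are uniformly quasi-convex in $\dw$ because, under the bi-Lipschitz correspondence with $N_l^0$, they map to Margulis tubes in $\Hyp^3$. The remaining ingredient is the mutual coboundedness / bounded penetration property of the family $\tmr$ in $\tmtdw$, which reduces via the same bi-Lipschitz correspondence to the classical bounded-penetration for distinct Margulis tubes in doubly degenerate hyperbolic $3$-manifolds, with uniform constants coming from $R$-thickness. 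The Mj--Pal partial electrification criterion then upgrades $\delta_0$-hyperbolicity of $\tmtdt$ to strong relative hyperbolicity of $\tmtdw$ with respect to $\tmr$.

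The main obstacle is the globalization in part (1): controlling how the flaring and hyperbolicity constants behave simultaneously across all bi-infinite geodesics in $T$, in particular across vertices of high valence. The hypotheses on $L \geq L_0$ and on the valence bound $V_0$ are essential here to ensure that the mid-surfaces gluing adjacent tree-links $T_v, T_w$ are uniformly well-separated in $\ccs$ and that subsurface projections remain uniformly bounded, preventing the hyperbolicity constants from blowing up as one moves through branch points of $\BU(T)$.
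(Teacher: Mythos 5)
First, a point of reference: this paper does not prove Theorem \ref{mainprel} at all. It is introduced with the words ``The main theorem of \cite{mahan-hyp} states,'' i.e.\ it is imported wholesale from the companion paper, so there is no in-paper proof to compare your sketch against. That said, your overall architecture --- localize to bi-infinite geodesics $l\subset T$ via the special split geometry models $N_l$, globalize over $\BU(T)$ via a flaring/combination theorem for metric bundles, and treat $\dt$ as a Mj--Pal partial electrification of $\dw$ along $\tmr$ --- is consistent with the strategy of \cite{mahan-hyp} as it is described elsewhere in this paper (the flaring equivalence, the qi-sections of \cite[Lemma 4.19]{mahan-hyp}, and the use of \cite{mahan-pal}).

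There is, however, one genuine gap in your first stage, and it sits exactly at the crux of the theorem. You assert that electrifying the Margulis tubes $\T_v\subset N_l$ ``corresponds precisely to collapsing the $S^1$-factor of each riser $\RR_v$'' and deduce uniform hyperbolicity from Farb-style electrification of tubes in $\Hyp^3$. This conflates the \emph{full} electrification (coning off each tube, which collapses both factors of $\R\times T_v$ and does give uniform hyperbolicity for free) with the \emph{partial} tube-electrification $\dt$, which collapses only the circle direction and leaves each riser quasi-isometric to the interval $T_v\cap l$ of length $h_v$. Since $h_v\geq L-\kappa$ is unbounded, $(\til M_l,\dt)$ is not quasi-isometric to the fully electrified space, and its hyperbolicity constant cannot be read off from Farb's theorem; indeed, the whole reason $\dt$ is introduced (Section \ref{sec-tel}) is that the unelectrified flat strips $\R\times T_v$ make the constant of $\tmtdw$ depend on $L$, and the nontrivial claim is precisely that retaining the $T_v$-direction still yields a constant depending only on $R$ and $V_0$. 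To repair this you must run the partial-electrification machinery already in stage one: establish strong relative hyperbolicity of $\til N_l=\Hyp^3$ relative to the lifted tubes with constants controlled by the separation constant $\ep$ of Proposition \ref{prop-splsplit} (hence by $R$), verify that the induced intrinsic geometry on each partially electrified riser is that of the interval $T_v$ and that these sets are uniformly quasiconvex and mutually cobounded, and only then invoke \cite{mahan-pal}. A similar caveat applies to part (2): the risers with their product metric $S^1_e\times T_v$ are not bi-Lipschitz to the Margulis tubes themselves (only the complements $M_l^0$ and $N_l^0$ are identified by Theorem \ref{model-str}), so their quasiconvexity in $\tmtdw$ requires an argument through the relative structure rather than a direct transfer.
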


Note that $\delta_0$ in Theorem \ref{mainprel} depends on $R$ but not on $L$ provided $L$ is large enough.
\subsubsection{Split geometry from a hyperbolic point of view}\label{sec-splitgeo} For the purposes of this sub-subsection, let $l = T$ be a bi-infinite geodesic and $v \in l$ a vertex. Then the tree link $T_{v,l}$ corresponding to the vertex $v$ and tree $l=T$ is an interval of length $$h_v := d_{\CC(S\setminus \{i(v)\})} (i(u),i(w)),$$ where $u, w$ are the vertices on $l$ adjacent to $v$.

We fix a hyperbolic structure $(S,h)$ on $S$ for the rest of the discussion. 
\begin{defn}\label{def-dbddgeo}
	A \defstyle{$ D-$bounded geometry surface} in a hyperbolic manifold $N$ is the image of a $D-$bi-Lipschitz embedding of $(S,h)$ in $N$. \\
	Let $B$, homeomorphic to $S \times [0,1]$ be a hyperbolic manifold \emph{with boundary}, i.e.\ the interior $S \times (0,1)$ of $B$ has a metric of constant curvature $-1$, and $\partial B$ is equipped with the induced Riemannian metric. We say that  $\partial N$ has \defstyle{$ D-$bounded geometry}, if each component of
	 $\partial N$ (equipped with the induced Riemannian metric) is $D-$bi-Lipschitz 
	 homeomorphic to $(S,h)$.
\end{defn}

{For a hyperbolic manifold $N$ without boundary,
 the \defstyle{injectivity radius} at a point $x$ refers to
half the length of a shortest homotopically essential curve passing through $x$. If  $N$ has 
boundary,  the \defstyle{injectivity radius} at a point $x \in \partial N$ refers to
half the length of a shortest homotopically essential curve in $\partial N$ 
passing through $x$, where  $\partial N$ is equipped with the induced Riemannian metric.}

\begin{defn}\label{def-splitblock}{
Let $B$ denote a hyperbolic manifold with boundary, homeomorphic to $S \times [0,1]$. We say that $B$ is a
 \defstyle{split block} with parameters $D \geq 1$ and $\ep > 0$ if 
 \begin{enumerate}
 \item $B$ contains a {\em unique Margulis tube} $\T^0$ with core curve of length at most $\ep$, such that for all
 $x \in B \setminus  ( \T^0)$, the injectivity radius $inj_x (B) \geq \ep$.
 \item 	There exists a 
 solid torus neighborhood $\T$ of $\T^0$ contained in a $D-$neighbor\-hood of $\T^0$, called a {\bf splitting tube}, such that
 $\T \cap S \times \{0\}$ and $\T \cap S \times \{1\}$ are  annuli in $S \times \{0\}, \, S \times \{1\}$ respectively.   Further, the annuli 
 	$\T \cap S \times \{0\}$ and $\T \cap S \times \{1\}$ contain the geodesic representatives of their core-curves in $S \times \{0\}, \, S \times \{1\}$ respectively.
 \item $\partial B$ has $D-$bounded geometry.
 \end{enumerate}}
 \end{defn}

\begin{prop}\cite[Proposition 3.11]{mahan-hyp}\label{prop-splsplit}
	Given $R>0$, there exist {$L_0 \geq 3$}, $D \geq 1$ and $\ep > 0$ such that the following holds.\\ Let $N_l$ be
	a  doubly degenerate  manifold of special split geometry (see Definition \ref{def-ddsplgeoltightrthick}) corresponding to an $L-$tight, $R-$thick tree $l \subset \ccs$ with underlying space $\R$ {and $L \geq L_0$}. Then
	\begin{enumerate}
		\item
		there exists a sequence
		$\{ S_i \}, i \in \Z$  of   disjoint,  embedded, incompressible, $ D-$bounded geometry surfaces called  {\bf split surfaces}
		exiting  the ends $E_\pm$ as $ i \to \pm \infty$ respectively. 
		The surfaces are ordered so that   $i<j$ implies that
		$S_j$ is contained in the component of $N_l\setminus S_i$ representing $E_+$.
		\item {
		Let $B_i$ denote the topological product region  between $S_i$ and $S_{i+1}$. Then $B_i$ is a  split block  with parameters $D, \ep$.}
			\item Let $T_i$ denote the splitting tube of $B_i$. For $i \neq j$, $\T_i, \, \T_j$ are $\ep-$separated from each other.
			\item Let $v_i$ denote the core curve of $T_i$. Then $\cdots. v_{-1}, v_0, v_1, \cdots$ coincides with the vertices of $l$.
	\end{enumerate}
 \end{prop}

{The split surfaces $S_i$ in Proposition \ref{prop-splsplit} are intimately connected to
Theorem \ref{model-str}. Recall that Theorem \ref{model-str} furnishes a metric bundle
$\Pi: M_l \to BU(l)$, where $l$ is as in  Proposition \ref{prop-splsplit}. The split surfaces $S_i$ correspond precisely to $\Pi^{-1} (v_i)$, where $v_i$ is the $i$-th vertex on $BU(l)$.}

\begin{defn}\label{def-splsplit}
	The numbers $D \geq 1$ and $\ep > 0$ shall be called the parameters of special split geometry.
\end{defn}

{Note that in Definition \ref{def-splsplit},  $D$ absorbs two constants into one, and thus serves 2 purposes: 
\begin{enumerate}
\item as a bi-Lipschitz constant (Definition \ref{def-dbddgeo}) for surfaces,
\item as an upper bound on the distance of a split level surface from a Margulis tube
(Definition \ref{def-splitblock}, item (2)).
\end{enumerate}}

Thus the special split geometry manifold $N_l$ can be decomposed as a union $N_l= \bigcup_i B_i$ of split blocks. See Figure \ref{schematic2} below, where a split block with a splitting tube is given. A section of the splitting tube $T_i$ is drawn on the right side. Note the similarity between the block $B_i$ and the region between $S_i$ and $S_{i+1}$ in Figure \ref{schematic}.

\begin{figure}[H]
	
	\centering
	
	\includegraphics[height=8cm]{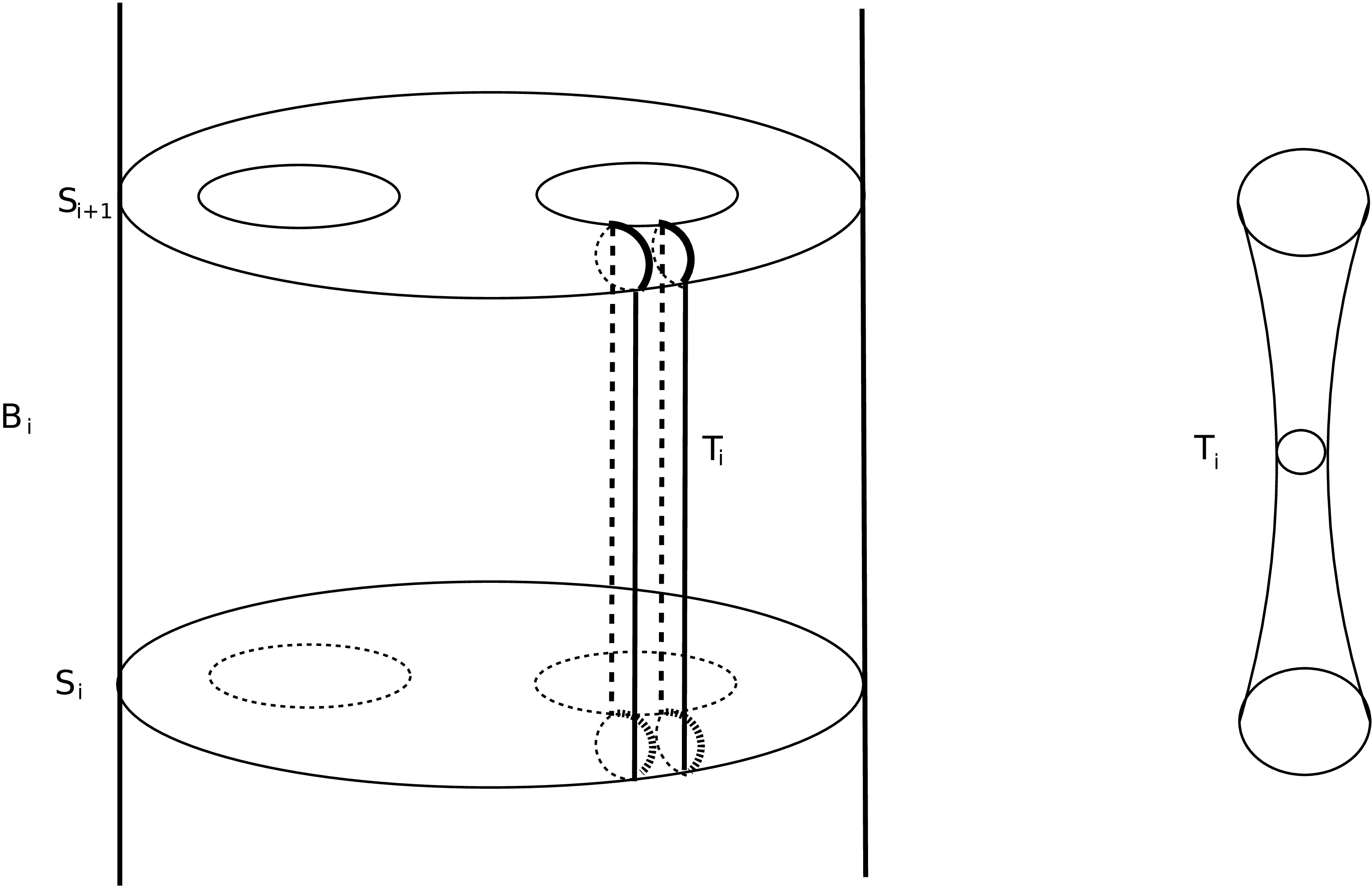}

	\bigskip
	
	\caption{Split block $B_i$ with splitting tube $\T_i$ with core curve of length at most $\ep$.  Split surfaces $S_i, S_{i+1}$ are of $D-$bounded geometry.}
	\label{schematic2}
\end{figure}

\begin{defn}\label{def-ht}
Ordering the vertices of $l$ by $i \in \Z$, if $v$ is the $i-$th vertex on $l$, we denote $h_v$ by $h_i$ and call it the {\bf combinatorial height} of the $i-$th split block. 

{Let $B$ be a split block, with splitting tube $\T$, and boundary components $S \times \{0\}$ and $S \times \{1\}$. 
The distance between $(S \times \{0\} \setminus \T)$ and $(S \times \{1\}\setminus \T)$
in the induced path-metric on $(B\setminus \T)$ will be called the {\bf geometric height} of the  split block $B$. }
\end{defn}

{The following is a consequence of the model manifold in \cite[Theorem 9.1]{minsky-elc1}:
\begin{lemma}\label{lem-heightscompare} Given $D\geq 1, \ep > 0$, there exists $C \geq 1$ such that the following holds. \\
Let $N_l$ denote a doubly degenerate hyperbolic manifold of special split geometry with
parameters $D, \ep$. Then the geometric height of the $i-$th split block lies in
$[\frac{h_v}{C}, {h_v}{C}]$.
\end{lemma}}

{ The lower bound $\ep > 0$ on injectivity radius away from splitting tubes is equivalent to $R-$thickness: it follows 
 from \cite[Theorem 9.1]{minsky-elc1} and $R-$thickness that  no  curves other than those on $l$ are too short. Again, from \cite[Theorem 9.1]{minsky-elc1}, it follows that the geometric height of a splitting tube corresponding to $v$ is comparable to the combinatorial height of $v$. }
 
{ Next, let $B$ be a split block and $\T$ the splitting tube in $B$. Note  that, in the presence of a lower bound on
injectivity radius, the diameter of $B\setminus \T$ is bounded in terms of height. }

\begin{rmk}\label{rmk-ht}{
Since the geometric and combinatorial heights are comparable by Lemma \ref{lem-heightscompare}, we shall, henceforth, simply refer to the height of a split block.}
\end{rmk}

\subsection{A criterion for quasiconvexity} We recall 
from \cite[Section 4.5]{mahan-hyp} a necessary and sufficient condition for promoting  quasiconvexity in vertex spaces $X_v$ to quasiconvexity in the total space  $\tmtdt$.
We refer the reader to \cite{BF,mahan-reeves,gautero} for the relevant background on trees of relatively hyperbolic spaces and the flaring condition. 
Let $P: \tmtdt \to \but$  denote the usual projection map.
For  $w\in \but$, $(X_w,d_w)$ will denote $P^{-1}(w)$ equipped with  the path metric induced by $d_{weld}$.
We recall some of the necessary notions from \cite{mahan-hyp}.  A \emph{$\rho$--qi-section} is a section of a metric bundle which is also a $\rho$--qi-embedding.  By \cite[Lemma 4.19]{mahan-hyp}, there is a $\rho_0$ depending only on the genus $g\ge 2$ and the maximum valence of $T$, so that there are $\rho_0$--qi-sections of the bundles $\tmtdw\to \but$ and $\tmtdt\to\but$ passing through any chosen point.
When we refer to qi-sections of these bundles below, we assume that they are  $\rho_0-$qi-sections.

\begin{defn}\label{def-qibhallway}
	A disk $f : [a,b]{\times}{I} \rightarrow 
	\tmtdt$ is a {\bf qi-section bounded hallway}  if:
	\begin{enumerate}
		\item for all $ v \in \but$, $f^{-1} ({X_v}) = \{t \}{\times}
		I$ for some $t \in [a,b]$. Further,
		$ f$ maps $t{\times}I$ to a geodesic in  $(X_v,d_v)$. The length of the geodesic {$ f (\{t \}{\times}I)$} in $X_v$ will be denoted by $\LL_t$. 
		\item 	For all {$s \in I$}, $P\circ f$ is an isometry of $[a,b] \times \{ s\}$ (with the Euclidean metric) onto a geodesic $\sigma \subset \but$.
		\item $f ([a,b] \times \{0\})$ and $f ([a,b] \times \{1\})$
		are contained in $\rho_0$-qi-sections; in particular, they are $\rho_0$-qi-sections  of $P\circ f: [a,b] \times \{0\} \to \but$ and $P\circ f: [a,b] \times \{1\} \to \but$.
	\end{enumerate}
	The {\bf girth} of such a hallway is $\min_t \LL_t$.
\end{defn}

\begin{defn}\label{flare}
	The  space $\tmtdt$, is said to satisfy the {\bf qi-section bounded hallways flare}
	condition with parameters $\lambda > 1$, $m \geq 1$ and $H \geq 0$ if
	for any qi-section bounded hallway $f : [a,b]{\times}{I} \rightarrow 
	\tmtdt$   of girth at least $H$ and with $b-a \geq m$,
	$$\lambda \, \LL_{\frac{a+b}{2}} \leq \, {\rm max} \ \{ \LL_{a}, \LL_b \}.$$
\end{defn}

\begin{rmk}
  In {\cite[Theorem 4.20, Corollary 4.23, 4.24]{mahan-hyp}} the equivalence of the flare condition above and hyperbolicity of $\tmtdt$ was established.  Thus, Theorem \ref{mainprel} implies the existence of constants $\lambda > 1$, $m \geq 1$, $H \geq 0$ as above such that $\tmtdt$ satisfies  qi-section bounded hallways flare
condition with parameters $\lambda, m, H$.
\end{rmk}

\begin{defn}\label{def-flareinall} Suppose that  $\tmtdt$ satisfies the  qi-section bounded hallways flare
	condition with parameters $\lambda > 1$, $m \geq 1$ and $H \geq 0$.
	A  subset $Y\subset (X_v,d_v)$ will be said to {\bf flare in all directions with parameter $K$} if for any geodesic segment $[c,d] \subset (X_v,d_v)$ with $c, d \in Y$ and any qi-section bounded hallway $f:[0,k] \times I \to \tmtdt$ of girth at least $H$ satisfying 
	\begin{enumerate}
		\item $f(\{0\} \times I) = [c,d]$,
		\item $\LL_0 \geq K$,
		\item $k \geq K$,
	\end{enumerate}
	the length $\LL_k$ of $f(\{k\} \times I)$ satisfies $\LL_k  \geq \,  \lambda \LL_0.$
\end{defn}

\begin{prop}\label{effectiveqc}  \cite[Proposition 4.27]{mahan-hyp}
	Given $K, C, \delta_0$, there exists $C_0$ such that the following holds.\\ Suppose that $\tmtdt$ is $\delta_0-$hyperbolic.
	Let $P: \tmtdt \to \but$ and $X_v$ be as above. If $Y$ is a $C-$quasiconvex subset of $(X_v,d_v)$ and flares in all directions with parameter $K$, then $Y$ is  $C_0-$quasiconvex in $\tmtdt$.
	
	Conversely, given $\delta_0, C_0$, there exist $K, C$ such that the following holds.\\ Suppose that $\tmtdt$ is $\delta_0-$hyperbolic.
	If 
	$\, Y \subset X_v$ is   $C_0-$quasiconvex in $\tmtdt$, then it is a $C-$quasiconvex subset in $(X_v,d_v)$ and flares in all directions with parameter $K$.
\end{prop}

\section{Geometric limits}\label{sec-geolts}
We shall need a few facts on geometric limits of doubly degenerate hyperbolic 3-manifolds $N_l$ of special split geometry (see for instance \cite[Chapters 8, 9]{thurstonnotes}, \cite[Chapter I.3]{CEG} and \cite[Chapters 8, 9]{kapovich-book} for details on geometric limits). {We refer especially to \cite{ohshika,os} for a detailed classification of geometric limits of Kleinian surface groups. In \cite[Section 4.5]{ohshika}, Ohshika discusses geometric limits of hierarchies. This sets up an exact dictionary between
	\begin{enumerate}
		\item Geometric limits of hierarchies in the Masur-Minsky marking complex \cite{masur-minsky2},
		\item Geometric limits of model manifolds constructed by Minsky \cite{minsky-elc1},
		\item Geometric limits of hyperbolic manifolds. The ending lamination theorem of Brock-Canary-Minsky \cite{minsky-elc2} establishes that a hyperbolic manifold with given end-invariants is bi-Lipschitz homeomorphic to the model manifold of  \cite{minsky-elc1} with constants depending only on the genus.
	\end{enumerate} In short, the dictionary between hierarchy paths \cite{masur-minsky2}, model manifolds  \cite{minsky-elc1}, and
	doubly degenerate hyperbolic manifolds established by the Ending Lamination Theorem and the model manifold technology that goes into it \cite{minsky-elc2} is extended to geometric limits in \cite{ohshika,os}. Here, we specialize this dictionary  to special split geometry manifolds.}

In the proof of Lemma \ref{qcMl}, we shall need to consider geometric limits of  a sequence of special split geometry manifolds $N_n$ with fixed parameters. {We refer the reader to Definition~\ref{def-ddsplgeoltightrthick} for the notion of special split geometry, and to Proposition~\ref{prop-splsplit} for the structure of manifolds having this geometry.}
For every $n$, fix a base split surface $S_{0,n} \subset N_n$ containing a base-point $x_n$. Since $N_n$ has special split geometry,  $S_{0,n}$ can be chosen to be of uniformly (independent of $n$) bounded geometry. Further, we may also assume that
$S_{0,n}$ does not intersect any of the Margulis tubes in $N_n$.
{For the purposes of this paper, a sequence $\{(Z_n, z_n)\}$ of geodesic metric spaces with base points \emph{Gromov-Hausdorff  converges}
	to a complete metric space $(Z_\infty,z_\infty)$ if
	\begin{enumerate}
	\item  there
	exist $K_n-$bi-Lipschitz homeomorphic embeddings  $H_n : (B_{R_n} (z_n ),z_n)
	\to (Z_\infty,z_\infty)$ of $R_n-$balls about $z_n$ into $(Z_\infty,z_\infty)$ with
	$K_n \to 1$ and $R_n \to \infty$, \\
	\item For every $z \in Z_\infty$, there exists $N\in \natls$, such that $z \in H_n (B_{R_n} (z_n ))$ for all $n \geq N$.
	\end{enumerate}
	When $\{(Z_n, z_n)\}$ are hyperbolic 3-manifolds, Gromov-Hausdorff  convergence specializes to geometric convergence. {We refer the reader to \cite[Definition 9.1.1]{thurstonnotes} for Thurston's definition of geometric convergence for hyperbolic 3-manifolds,
		and \cite[Chapter 3]{CEG} for the equivalence between pointed 
		Gromov-Hausdorff  convergence and geometric convergence.}
	
	Geometric convergence of hierarchies, on the other hand, represents a convergence of the encoding devices, the hierarchies. It is worth pointing out that this notion is arranged so that hierarchies converge geometrically if and only if the associated model manifolds converge in the Gromov-Hausdorff sense.  }

{
	\begin{defn}\label{def-geocgncetriples}
		A sequence of triples $\{(N_n,S_{0,n},x_n)\}$ of hyperbolic manifolds (resp.\ manifolds with piecewise Riemannian metrics)
		is said to converge geometrically (resp.\ Gromov-Hausdorff) to 
		$(N_\infty,S_{0,\infty},x_\infty)$ if
		\begin{enumerate}
			\item   $(N_n,x_n)$ converges to	$(N_\infty,x_\infty)$ geometrically (resp.\ Gromov-Hausdorff), and
			\item with induced path metrics $(S_{0,n},x_n)$ Gromov-Hausdorff converges to
			$(S_{0,\infty},x_\infty)$.
		\end{enumerate}
\end{defn}}

{
	\begin{rmk}\label{rmk-bddgeo}
		We note at the outset that if $N_n$ is a sequence of doubly degenerate hyperbolic manifolds with injectivity radius bounded below by
		$\ep>0$, then any geometric limit $N_\infty$ is also a doubly degenerate hyperbolic manifold with injectivity radius bounded below by
		$\ep>0$. One way to see this is via \cite{minsky-elc2} or \cite{rafi} where the injectivity radius bound translates to uniform bounds on subsurface projections.
		Further, by \cite[Section 4.5]{ohshika} (see the dictionary mentioned at the beginning of this subsection), the model manifold for the geometric limit is obtained via the model metric construction of \cite{minsky-elc1} applied to the 
		geometric limit of the hierarchies. The geometric limit of the hierarchies furnishes 
		the same uniform bound on subsurface projections.
		Hence any geometric limit $N_\infty$ is also a doubly degenerate hyperbolic manifold with injectivity radius bounded below. That the same $\ep$ suffices is evident from convergence of $\ep-$balls in the geometric limit.
\end{rmk}}

{Below, we shall be particularly interested in the following cases:
	\begin{enumerate}
		\item $\{(N_n,S_{0,n},x_n)\}$ is a sequence of split geometry hyperbolic manifolds equipped with base surfaces $S_{0,n}$.
		Here, geometric convergence will be the relevant notion of convergence.
		\item $\{(M_n,S_{0,n},x_n)\}$ is a sequence of model manifolds equipped with the
		welded metric $\dw$  and base surfaces $S_{0,n}$.  Here, Gromov-Hausdorff convergence will be the relevant notion of convergence. Note that $\dw$ is not necessarily a smooth metric. However, $\dw$ is smooth both when restricted to the risers, as well as away from the risers. Hence $\dw$ gives rise to a piecewise Riemannian metric on $(M_n,\dw)$.
\end{enumerate}}

\subsection{Geometric limits of $(N_n,x_n)$}\label{sec-geoltshyp} Let  $\{(N_n,S_{0,n},x_n)\}$ be a sequence of split geometry hyperbolic manifolds with parameters $D, \ep$.
After passing to a subsequence if necessary, we assume henceforth that the triples $(N_n,S_{0,n},x_n)$ converge geometrically to 
$(N_\infty,S_{0,\infty},x_\infty)$. This is possible, since $S_{0,n}$'s have been chosen to be of uniformly (independent of $n$) bounded geometry. Geometric convergence guarantees the existence of  $L_n-$bi-Lipschitz homeomorphisms $H_n:
(S_{0,\infty},x_\infty)\to (S_{0,n},x_n)$ with $L_n\to 1$ as $n \to \infty$.

{To describe geometric limits of $N_n$, we first describe geometric limits of individual split blocks.}

{
	\begin{lemma}\label{lem-geoltsplitb} Fix $D \geq 1, \ep >0$.
		Let $B_m$ be a sequence of split blocks with splitting tubes $\T_m \subset B_m$
		and parameters $D, \ep$.
		Let $S_m$ and $S_m'$ denote the boundary components of $B_m$.
		Let $h_m$ denote the height of $B_m$. If $h_m \leq h$ for all $m$, then 
		any limit of $\{B_m, S_m, x_m\}$ is of the form $\{B_\infty, S_\infty, x_\infty\}$,
		where $B_\infty$ is a split block of height at most $h$.
\end{lemma}}

\begin{proof} {In the proof below, we assume that we have passed to a subsequence whenever necessary to ensure convergence. Due to the structure of
		a splitting tube $\T$ in a split block $B$ (Definition \ref{def-splitblock}), the boundary $\partial \T$ is bi-Lipschitz homeomorphic to $S^1_e \times S^1_m$, where the first factor $S^1_e$ is  the unit circle. Further, $S^1_e$ is homotopic 
		to the core curve of $\T$ and, for any $p \in S^1_e$, $p \times S^1_m$ (the second factor) bounds a disk in $\T$. By $D-$boundedness, the two annuli comprising $\T \cap \partial B$ have core curves in $\partial B$ of length bounded above in terms of
		$D$.
	For any $C_0$,	since the number of curves on $\partial B_m$ of length bounded above by $C_0$ is uniformly bounded (independent of $m$), we can  pass to a subsequence so that a fixed curve on $S$ corresponds to the core curve of $\T_m \subset B_m$ for all $m$.
		Assume without loss of generality that the base-points $x_m \in \T_m \cap S_m$, i.e.\ the splitting tubes $\T_m$ intersect the boundary $S_m$ in annuli containing $x_m$ (this is possible by Proposition \ref{prop-splsplit}). Since the height of a block $B$ controls
		the diameter of $B \setminus \T$ in the presence of a lower bound on injectivity radius (see the paragraph preceding Remark \ref{rmk-ht}), the sequence $\{ (\T_m,  x_m)\}$ converges to a hyperbolic solid torus $(\T_\infty,x_\infty)$ with boundary.}
	
	{
		We next observe that, as in Remark \ref{rmk-bddgeo}, the sequence
		$\{((B_m\setminus \T_m), S_m, x_m)\}$ converges  to $\{(B_\infty\setminus \T_\infty'), S_\infty, x_\infty\}$ where $(B_\infty\setminus \T_\infty')$ has injectivity radius bounded below by $\ep$.}

	{
		Finally, observing that convergence forces convergence of $(\partial \T_m, x_m)$ to both $(\partial \T_\infty', x_\infty)$ and  $(\partial \T_\infty, x_\infty)$, it follows that the
		last two are isometric. Hence  any limit of $\{B_m, S_m, x_m\}$ is of the form $\{B_\infty, S_\infty, x_\infty\}$,
		where $B_\infty$ is a split block of height at most $h$.}
\end{proof}

{
	\begin{defn}\label{def-ltsplit} Let $B_m$ be a sequence of split blocks with splitting tubes $\T_m \subset B_m$
		and parameters $D, \ep$. Let $(B_m,x_m) \to (B_\infty,x_\infty)$ geometrically, where, as before, we assume that $x_m \in S_m$. Let $h_m$ denote the height of $B_m$.
		If  $h_m$ tends to infinity as $n$ tends to infinity, the geometric limit 
		$B_{\infty}$ shall be called a {\bf limiting split block}.
\end{defn}}

{
	\begin{lemma}\label{lemma-rk1cuspinlimitsplit} With setup as in Definition \ref{def-ltsplit},
		a limiting split block $B_{\infty}$ contains a rank one cusp $\T_\infty$ arising as a limit of the splitting tubes $\T_{m}$. Away from $\T_\infty$, the injectivity radius of
		$B_{\infty}$ is bounded below by $\ep$.
\end{lemma}}

{
	\begin{proof} We only  give a quick sketch as  the argument is 
		similar to Lemma \ref{lem-geoltsplitb}.
		Since $B_m \to B_\infty$, we can assume that for all $m$ large enough, the splitting tube $\T_m$ corresponds to a fixed curve $v$ in $\CC(S)$.
		Since $h_m \to \infty$ as $m \to \infty$, the subsurface projections on $S \setminus i(v)$ tend to infinity and the length of the core curve $\gamma_m$ in $\T_m$ tends to zero as $m \to \infty$.
		Further, since $x_m \in \T_m \cap S_m$, it follows that $d_m(x_m,\gamma_m)\to \infty$  as $m \to \infty$, where $d_m$ is the metric on $B_m$. Hence $(\T_m,x_m)$ converges to a rank one cusp
		$(\T_\infty,x_\infty)$. The last statement follows as before.
\end{proof}}

{
	Returning to special split geometry manifolds $N_n$,
	we shall now describe geometric limits for the positive and negative ends 
	$N_n^+$ and $N_n^-$ of $N_n$ as $n \to \infty$. We discuss the sequence of positive ends $N_n^+$ below. A similar discussion holds for  negative ends.
	Denote the $i-$th split surface (resp.  split block) of $N_n$ as
	$S_{i,n}$ (resp. $B_{i,n}$). Denote the $i-$th splitting tube, i.e. the splitting tube in $B_{i,n}$,  by
	$\T_{i,n}$. Let $h_{i,n}$ denote the height (see Definition \ref{def-ht}) of  $B_{i,n}$.
	Two cases arise for each $i$ (passing to a further subsequence if necessary): Either $h_{i,n}$ remains bounded as $n$ tends to infinity or  $h_{i,n}$ tends to infinity as $n$ tends to infinity.}

{
	If  $h_{i,n}$ remains bounded for all $i \geq 0$, then the positive ends $N_n^+$  converge to a degenerate end $N_\infty^+$ of special split geometry,
	since each sequence of split blocks $\{B_{i,n}\}$ does so by Lemma \ref{lem-geoltsplitb}. For all $j \geq 0$, the sequence of split surfaces $S_{j,n}$  converges to a $D-$bounded geometry surface $S_{j,\infty}$. We shall  refer to each such $S_{j,\infty}$ as a {\bf split surface in $N_\infty$.}}

{Else, let $i_0$ be the least positive integer such that $h_{i_0,n}$ tends to infinity as $n$ tends to infinity. By Lemma \ref{lem-geoltsplitb}, the split blocks $B_{i,n}$ (with base-points
	on $S_{i-1,n}$) converge to split blocks $B_{i,\infty}$  (with base-point
	on $S_{i-1,\infty}$) for $i<i_0$. Thus the union of the first $i_0$ blocks $\bigcup_{j=1}^{i_0} B_{j,n}$  (with base-points
	on $S_{0,n}$) converge to $\bigcup_{j=1}^{i_0} B_{j,\infty}$  (with base-point
	on $S_{0,\infty}$).
	Finally, the splitting tubes $\T_{i_0,n}$ converge to a rank one cusp in a limiting split block $B_{i_0,\infty}$ by Lemma \ref{lemma-rk1cuspinlimitsplit}. }

{
	Thus, when $h_{i,n}$ tends to infinity as $n$ tends to infinity,    the split blocks  $B_{j,n}$  converge in the  geometric limit  $N_\infty$, to split blocks  $B_{j,\infty}$ for $j<i$, while the split blocks  $B_{i_0,n}$ converge to a  limiting split block $B_{i_0,\infty}$ containing  a rank one cusp. For all $j>i_0$, the sequence of split blocks  $B_{j,n}$ satisfy
	$d_n(x_n, B_{j,n}) \to \infty$ for $x_n \in S_{0,n}$ and hence "vanish off to infinity".
	Hence, the sequence of split surfaces $S_{j,n}$, $j \leq i_0$ converges to a $D-$bounded geometry surface $S_{j,\infty}$. We shall also refer to such an $S_{j,\infty}$ for $j\leq i_0$ as a {\bf split surface in $N_\infty$.} Note that no such $S_{j,\infty}$ exists for $j>i_0$ in this case.} 

{\begin{rmk}\label{rmk-classfn-geolt}
We summarize the above discussion. Let  $\{N_n\}$ be  a sequence of special split geometry manifolds, with ends $\{N_n^\pm\}$.
There are two kinds of geometric limits 
$N_\infty^+$ possible:
\begin{enumerate}
\item Heights $h_{i,n}$ of split blocks remain bounded for all $i \geq 0$. In this case,
$N_\infty^+$ itself has special split geometry. This is illustrated in the top picture of Figure \ref{fig-splitlt}.
\item Else let $i_0 \geq 0$ be the least $i$ for which  $h_{i,n} \to \infty$ as $n\to \infty$. In this case, the split blocks $B_{i,n}$  converge to a split block $B_{i,\infty}$   for $i<i_0$. The split blocks  $B_{i_0,n}$ converge to a  limiting split block $B_{i_0,\infty}$ containing  a rank one cusp. Also $N_\infty^+$ is the union
of $B_{i,\infty}$   for $i\leq i_0$. This is illustrated in the bottom picture of Figure \ref{fig-splitlt}, where $i_0=1$. Compare Figure \ref{schematic2} for the picture of a single split block.
\end{enumerate}
A similar description occurs for $N_\infty^-$, giving four possible cases for the limit $N_\infty$. 
\end{rmk}}

\begin{figure}[H]
	
	\centering
	
	\includegraphics[height=10cm]{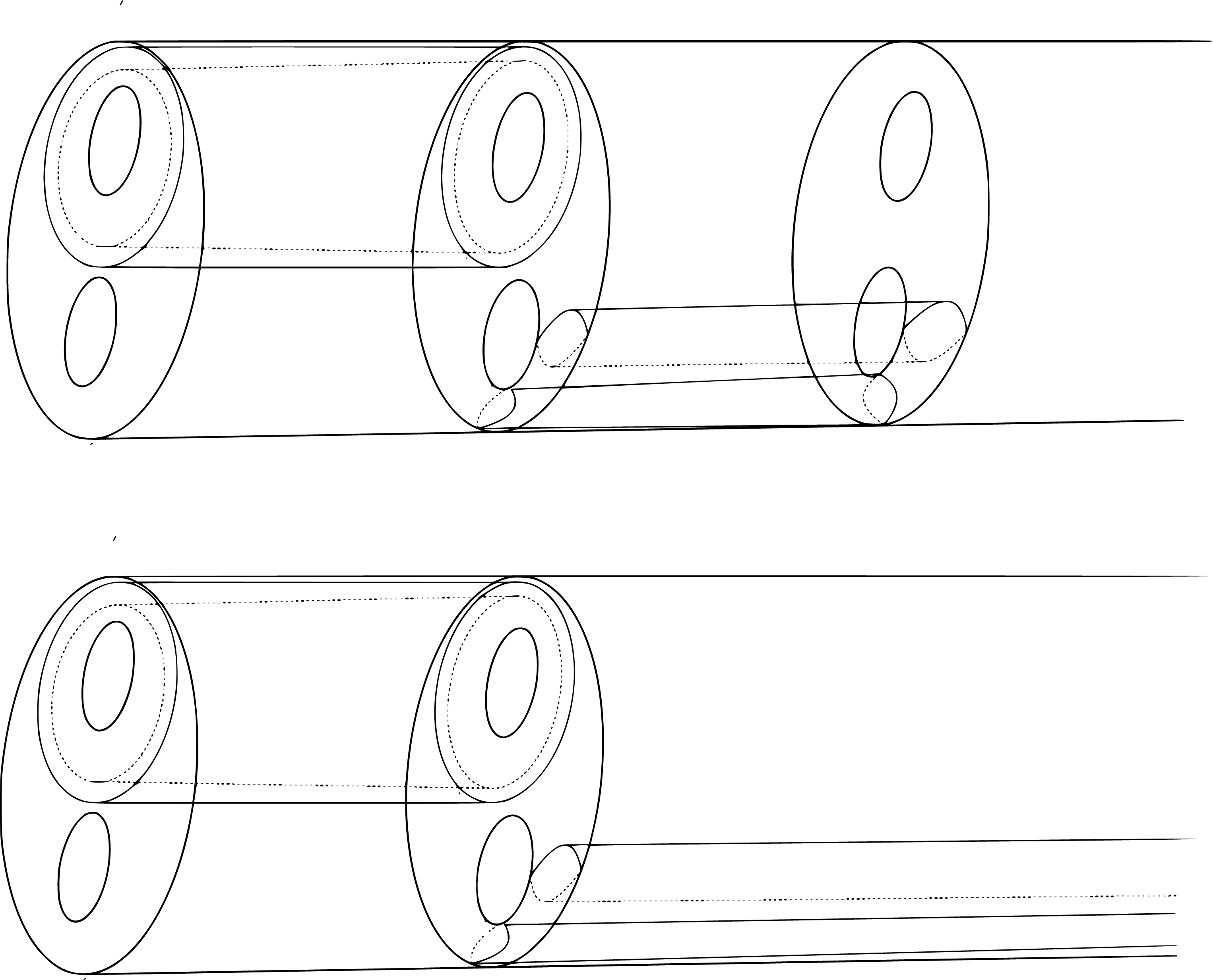}

	\bigskip
	
	\caption{Geometric limits of split geometry ends}
	\label{fig-splitlt}
\end{figure}
{
\subsection{ Gromov-Hausdorff limits of $(M_n, \dw)$}\label{sec-ghlts} Let $(M_n,\dw)$ denote 
	the metric graph bundle in Theorem \ref{model-str} corresponding to the doubly degenerate manifold $N_n$.
	We will now proceed to transfer the above geometric convergence statements to statements about Gromov-Hausdorff convergence of $(M_n, \dw)$.
	Let $(M_n, \dw)$ denote the  metric surface bundle (cf.\ Definition \ref{def-mbdl})  {$K-$}bi-Lipschitz to $N_n$ away from risers and Margulis tubes (Theorem \ref{model-str} Item (2)). We expand on the relation between the weld-metric  $(M_n, \dw)$ and the hyperbolic metric on the corresponding $N_n$ by extracting some details from the proof of \cite[Theorem 3.35]{mahan-hyp} as this is not explicit in the statement of Theorem \ref{model-str}. }

{
	It will suffice to describe $\dw$ for a split block $B$ with a splitting tube $\T$. Let $S, S'$ denote the boundary surfaces of $B$.
	Let $h$ denote the height of $B$. Let $B_0=
	(B\setminus  \T^o)$ (removing the interior of $\T$ from $B$). The boundary torus 
	$\partial \T$ consists of four parts:
	\begin{enumerate}
		\item $A^+= \partial \T \cap S$,  and  $A^-=\partial  \T \cap S'$, referred to as 
		\emph{small} annuli.
		\item $\partial \T \setminus (A^+ \cup A^-)$ consists of two 
		\emph{long} annuli $\AAA^+, \AAA^-$
		both $K-$bi-Lipschitz to the Margulis riser $\RR$ in $B$ (by Theorem \ref{model-str} Item (2)).
\end{enumerate}}

{
	Let $B_w$ denote the topological building block corresponding to $B$ (Definition \ref{def-topbb}). Topologically, $B_w$ may be obtained from $B$ by first removing 
	$\T^o$ and then constructing a surjective quotient map from $\partial \T$ to $\RR$ by collapsing $A^+, A^-$ to the boundary circles of $\RR$ and then diffeomorphically mapping the long annuli to $\RR$.
	We need to do this metrically to describe the weld metric $\dw$ on $B_w$.
	By Theorem \ref{model-str} (1), the Margulis riser $\RR$ in $(B_w,\dw)$ is isometric to $S^1_e \times [0,h]$. 
	Construct a map from $\partial \T$ to $\RR$ by
	\begin{enumerate}
		\item collapsing the small annuli $A^+$ (resp.\ $A^-$) to $S^1_e \times \{0\}$ (resp.\
		$S^1_e \times \{h\}$) via a $K-$Lipschitz map (sending the circle directions of 
		$A^\pm$ diffeomorphically to $S^1_e \times \{0\}$ or
		$S^1_e \times \{h\}$); and 
		\item sending the long annuli  $\AAA^+, \AAA^-$ to $\RR$ by 
		$K-$bi-Lipschitz diffeomorphisms.
	\end{enumerate}
	Note that $B_w \setminus \RR$ is naturally homeomorphic to $B \setminus \T$.
	The path metric $\dw$ is a \emph{singular Riemannian metric} on $ B_w$  obtained by minimizing over paths that consist of finitely many segments, each of whose interiors lie either entirely in $B_w\setminus \RR$ or entirely in $\RR$.  Paths in $B_w\setminus \RR$ are measured using a Riemannian metric $K-$bi-Lipschitz to the hyperbolic metric (see Theorem \ref{model-str}); paths in $\RR$ are measured using the product metric on $S^1_e \times [0,h]$.
	We shall refer to $(B_w,\dw)$ as the \emph{welded split block} associated to $B$ to emphasize the presence of the weld metric in the 
	topological building block of Definition \ref{def-topbb}.
	\begin{rmk}\label{rmk-alt}
		Alternately, {let $\psi_\pm:\bbar{\AAA_\pm} \to \RR$ denote the $K-$bi-Lipschitz diffeomorphisms in Item (2) above. Let 
		$\CC(\psi):= (\bbar{\AAA^+} \cup \bbar{\AAA^-}) \times [0,1] \cup_{\psi_\pm} \RR$ denote the mapping cylinder of $(\psi_+ \sqcup \psi_-) : 
		(\bbar{\AAA_+} \sqcup \bbar{\AAA_-})\to \RR$, where  $\sqcup$ denotes disjoint union. Now,
		\begin{enumerate}
		\item attach 	$\CC(\psi)$ to $\partial \T^o$ along 
		$(\bbar{\AAA^+} \cup \bbar{\AAA^-})$, and
		\item remove the short annuli altogether.
		\end{enumerate}
	The resulting object is an alternate (bi-Lipschitz equivalent) model for the \emph{welded split block} $(B_w,\dw)$.}
\end{rmk}}

{
	Let us now consider a sequence of split blocks $B_n$ 
	with $h_n \to \infty$. Let $(B_n^w,\dw)$ denote the  associated welded split block
	and $\RR_n \subset B_n^w$ denote its Margulis riser.
	Then, fixing a base-point $x_n \in S_n \cap \T_n$,
{	$(\partial \T_n,x_n) \to (\partial \T_\infty,x_\infty)$} and $(B_n,x_n) \to (B_\infty,x_\infty)$ where $B_\infty$ is a limiting split block by Lemma \ref{lemma-rk1cuspinlimitsplit}.
	Clearly, $S^1_e \times [0, h_n] \to S^1_e \times [0, \infty)=\RR_\infty$. 
	Passing to a subsequence if necessary, we can ensure that 
	\begin{enumerate}
		\item the metrics on $B_n^w \setminus \RR_n$  converge. Recall by Theorem \ref{model-str} that these metrics are $K-$bi-Lipschitz to the hyperbolic metrics on $B_n \setminus \T_n$.  
		\item the gluing  $K-$bi-Lipschitz diffeomorphisms used to define $(B_n^w,\dw)$ Gromov-Hausdorff converge to $K-$bi-Lipschitz maps from {$\partial \T_\infty$} to $S^1_e \times [0, \infty)$. Alternately, using Remark \ref{rmk-alt}, we can assume that the metric mapping tori
		Gromov-Hausdorff converge.
	\end{enumerate}
	Let $(B_\infty^w,\dw)$ denote the resulting 
	singular Riemannian manifold. 
	We refer to $(B_\infty^w,\dw)$ as the  \emph{limiting welded split block}. We record the following for later use.}

{
	\begin{lemma}\label{lem-ltbilip}
		$(B_\infty^w\setminus \RR_\infty,\dw)$ and $(B_\infty\setminus T_\infty)$ are
		$K-$bi-Lipschitz homeomorphic.
\end{lemma}}

{
	Note that $(B_w,\dw)$
	(resp.\  each $(B_n^w,\dw)$) is a metric surface bundle over $[0,h]$ (resp.\
	$[0,h_n])$ in the sense of Definition \ref{def-mbdl} by \cite[Theorem 3.35]{mahan-hyp}. The bundle structure passes to the limit, giving a metric bundle structure to $(B_\infty^w,\dw)$, where the base metric graph is $[0, \infty)$.}

{
	We turn now to the manifolds $(M_n, \dw)$ associated to the doubly degenerate manifolds $N_n$. Let $(M_n, \dw)$ denote a sequence of metric surface bundles as in Theorem \ref{model-str}. Note that each $(M_n, \dw)$ is built up of a union of welded split blocks whose metric structure has been described above.
	Let  $(M_\infty, \dw)$  denote the metric corresponding to the metric surface bundle   on the geometric limit  of the sequence $(M_n, \dw)$.  Let
	$N_\infty^0$ denote $N_\infty$ minus the union of Margulis tubes and rank one cusps. Similarly, let $M_\infty^0$ denote $M_\infty$ minus the union of limits of Margulis risers.}

From the way the metrics $\dw$ is constructed on $M_\infty$, we have the following  {Lemma. It    says  that the welding procedure and geometric limits essentially commute.}

{
	\begin{lemma}\label{weldandgeoltcommute}
		$N_\infty^0$ and $M_\infty^0$  are $K-$bi-Lipschitz homeomorphic.
\end{lemma}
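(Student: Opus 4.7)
The plan is to promote the bi-Lipschitz equivalence from Theorem \ref{model-str}(2) to the Gromov-Hausdorff limit by an equicontinuity argument, and then carefully match up the excised regions on both sides.

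First I would invoke Theorem \ref{model-str}(2) (together with the convention fixed just after it) to obtain, for each $n$, a $K$-bi-Lipschitz homeomorphism $\phi_n : (M_n^0, \dw) \to (N_n^0, d_{\mathrm{hyp}})$ extending to the path-metric completions, with $K$ uniform in $n$. Choose base-points $y_n \in M_n^0$ and $x_n = \phi_n(y_n) \in N_n^0$ on the base split surface $S_{0,n}$ (which is identified across $n$ by the setup of Section \ref{sec-geolts}). The family $\{\phi_n\}$ is equi-Lipschitz, and the sequences of pointed metric spaces $(M_n^0, y_n)$ and $(N_n^0, x_n)$ converge in the pointed Gromov-Hausdorff sense to $(M_\infty^0, y_\infty)$ and $(N_\infty^0, x_\infty)$ respectively (these sub-limits exist as full-space limits do, since removing risers/tubes/cusps is compatible with the pointed GH-limit).

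Next I would apply the standard Arzel\`a--Ascoli lemma for equi-Lipschitz maps between pointed GH-convergent sequences: after passing to a subsequence, the $\phi_n$ converge uniformly on compact sets to a $K$-Lipschitz map $\phi_\infty : M_\infty^0 \to N_\infty^0$. Applying the same argument to $\phi_n^{-1}$ produces a $K$-Lipschitz inverse $\psi_\infty : N_\infty^0 \to M_\infty^0$, and a diagonal argument (using uniform convergence on compacta and the identity $\phi_n^{-1}\circ\phi_n = \mathrm{id}$) shows $\psi_\infty \circ \phi_\infty = \mathrm{id}$ and vice versa. Thus $\phi_\infty$ is a $K$-bi-Lipschitz homeomorphism between its domain and image.

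The main step is to verify that the domain of $\phi_\infty$ is exactly $M_\infty^0$ and the image is exactly $N_\infty^0$, which amounts to matching up the excised regions block-by-block using the analysis in Section \ref{sec-geolts}. For a split block $B_{i,n}$ with $h_{i,n}$ bounded, the block and its associated Margulis riser in $M_n$ both converge to bounded objects which by the $K$-bi-Lipschitz identification converge to matching bounded riser and Margulis tube in $N_\infty$; removing them on either side corresponds. For a block with $h_{i,n} \to \infty$, the splitting tube $\T_{i,n}$ limits to a rank one cusp in the limiting split block $B_{i,\infty}$ (Remark \ref{rmk-rk1cuspinlimitsplit}), while on the $M$-side the corresponding Margulis riser $\RR_v \cong S^1 \times T_v$ has $T_v$-factor of length $h_{i,n}$ and limits to $S^1 \times \R$ (or a half-infinite analogue); since $\phi_n$ is defined only on the complement of risers and tubes, the limit $\phi_\infty$ is defined on the complement of their limits, giving exactly $M_\infty^0$ on the domain and $N_\infty^0$ on the codomain.

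The main obstacle I anticipate is ruling out degeneracies at transitions where $h_{i,n} \to \infty$: in principle one might worry that distinct risers collapse together or that the bi-Lipschitz identification could identify points from different risers in the limit. This is handled by the uniform separation built into Proposition \ref{prop-splsplit} (the splitting tubes are $\epsilon$-separated) and the corresponding product-region structure on the $M$-side from Theorem \ref{model-str}(1): these uniform bounds pass to the limit, keeping the excised regions disjoint and correctly paired.
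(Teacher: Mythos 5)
Your argument is correct and is in the same spirit as the paper's, which offers no detailed proof at all: the authors simply assert that the lemma follows ``from the way the metrics $\dw$ and $\dt$ are constructed'' together with Theorem \ref{model-str}(2), relying on the preceding control of the local geometry of $\frac{\ep_0}{2}$--balls to guarantee that geometric limits exist and are compatible with the welding. Your Arzel\`a--Ascoli argument for the equi-bi-Lipschitz maps $\phi_n$, together with the block-by-block matching of excised risers with splitting tubes (and, when $h_{i,n}\to\infty$, with rank one cusps) using the $\ep$--separation from Proposition \ref{prop-splsplit}, is exactly the routine verification the authors leave implicit.
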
}

\begin{proof}
	{There are 2 cases as given by Lemmas \ref{lem-geoltsplitb} and \ref{lemma-rk1cuspinlimitsplit}. We deal with only the positive end as before.
		Suppose first that each split block has uniformly bounded height.
		Then  $N_\infty$ is a doubly degenerate manifold with split geometry. Further, after passing to a subsequence if necessary, we may assume that the metric mapping tori of
		Remark \ref{rmk-alt} converge. For a split block $B$ with splitting tube $\T$ in $N_n$ (for any $n\in \natls$), 
		let $(B_w,\dw)$  denote the corresponding welded split block, and let $\RR$ denote the Margulis riser in it. The metric on $(B_w\setminus \RR,\dw)$ is $K-$bi-Lipschitz to the
		hyperbolic metric 
		on $B \setminus \T$ (this is contained in  the proof of \cite[Theorem 3.35]{mahan-hyp} mentioned above). Any split block in $N_\infty$ is a limit of such split blocks, and the above property passes to limits. Concatenating all the split blocks of  $N_\infty$ together, we conclude that 	$N_\infty^0$ and $M_\infty^0$  are $K-$bi-Lipschitz homeomorphic.}
	
	{Next, suppose that there exists $i_0$ such that the heights $h_{i_0,n} \to \infty$ as in the discussion following Lemma \ref{lemma-rk1cuspinlimitsplit}. Assume that $i_0$ is the least such positive integer. For $j<i_0$, let $B_j$ denote the $j$th split block, $\T_j \subset B_j$ its splitting tube, and $(B_j^w,\dw)$ denote the associated welded split block, with Margulis riser $\RR_j$.
		By the argument above for the case where
		all heights are bounded, it follows that the hyperbolic metric on  $\bigcup_{j<i_0} (B_j \setminus \T_j)$ and the metric on $\bigcup_{j<i_0} (B_j^w \setminus \RR_j)$
		induced by $\dw$ are $K-$bi-Lipschitz. It therefore remains only to prove the 
		$K-$bi-Lipschitz property for limiting split blocks and associated 
		limiting welded split blocks. But this is the content of Lemma \ref{lem-ltbilip}.}
\end{proof}

	{
\subsection{Geometric limits of hierarchies}\label{sec-hierlts} To complete the dictionary mentioned at the beginning of this Section \ref{sec-geolts}, we describe briefly what  geometric limits of hierarchies associated to special split geometry manifolds look like. We have, in Section \ref{sec-geoltshyp} considered doubly degenerate manifolds of special split geometry corresponding to $L-$tight $R-$thick trees. By Proposition \ref{prop-splsplit} and Definition \ref{def-splsplit}, the parameters $D, \ep$ of special split geometry  depend only on $R$ for $L\geq 3$.
We start with the following, where we use the fact that $\partial \CC(S)=\EL(S)$
\cite{klarreich-el}.}

	{
\begin{lemma}\label{lem-injradforLtRt} There exist positive functions $R_0(\ep), L_0(\ep)$ satisfying $R_0(\ep) \to \infty$, $L_0(\ep)\to \infty$ as $\ep \to 0+$
such that the following holds.\\ 
Let $T=\{\cdots,v_{-1},v_0,v_1, \cdots\}$ be an $L-$tight $R-$thick tree of non-separating curves in $\CC(S)$ with underlying space $\R$ such that $L\geq 3$
and $R \leq R_0$.
Let $\LL^\pm$ denote the ending laminations given by the ideal end-points of $T$ in $\partial \CC(S)=\EL(S)$.
	 Let $N_T$ denote the doubly degenerate hyperbolic manifold
with end-invariants  $\LL^\pm$. Then the short curves of $N_T$ correspond to a subset
of the vertices $\{\cdots,v_{-1},v_0,v_1, \cdots\}$. More precisely, the $\ep-$thin part of $N$ consists of neighborhoods of closed geodesics corresponding to 
a subset
of the vertices $\{\cdots,v_{-1},v_0,v_1, \cdots\}$. Further, there exists $L_0$ such that if $L \geq L_0$, then the short curves of $N_T$ correspond to the
entire set of  vertices $\{\cdots,v_{-1},v_0,v_1, \cdots\}$ of $T$.
\end{lemma}}

\begin{proof}	{
The uniqueness of $N_T$ is an output of the ending lamination theorem \cite{minsky-elc2}.}

	{
The  collection of short curves in $N_T$ is determined precisely by the meridinal 
coefficients of tori in Minsky's model manifold
\cite[Theorem 9.11]{minsky-elc1}. $R-$thickness of $T$ now guarantees the first conclusion of the lemma: $\ep-$thickness away from neighborhoods of geodesics corresponding to 
$\{\cdots,v_{-1},v_0,v_1, \cdots\}$. }

	{
Finally, $L-$tightness of $T$ guarantees that the  meridinal 
coefficients of solid tori in Minsky's model manifold
\cite[Theorem 9.11]{minsky-elc1} are at least $L$. Taking $L_0$ sufficiently large
gives the second conclusion.}
\end{proof}

	{
 To describe geometric limits of hierarchies corresponding to 
 doubly degenerate manifolds of special split geometry, it suffices therefore to consider geometric limits of hierarchies corresponding to $L-$tight $R$-thick trees. As in the discussion following Lemma \ref{lemma-rk1cuspinlimitsplit} in Section \ref{sec-geoltshyp}, we shall describe only sequences of hierarchies for 
 positive ends. Let $\{v_{0,n}, v_{1,n}, v_{2,n}, \cdots\}$ be a sequence of 
 $L-$tight $R-$thick geodesics $\gamma_n$ with $d_{\CC(S \setminus v_{i,n})} (v_{i-1,n}, v_{i+1,n})=L_{i,n}$. Note that by \cite[Theorem 9.11]{minsky-elc1}, the heights 
 $h_{i,n}$ of split blocks occurring in Section \ref{sec-geoltshyp} and $L_{i,n}$ are comparable, i.e.\ there exists $c \geq 1$ (depending only on $S$) such that
 for all $i, n$
$ \frac{1}{c} L_{i,n} \leq h_{i,n} \leq cL_{i,n}.$ There are thus two cases
as in Section \ref{sec-geoltshyp}:\\
1) $L_{i,n}$ is bounded independent of all $i, n$. Then, after passing to a subsequence if necessary, $\gamma_n=\{v_{0,n}, v_{1,n}, v_{2,n}, \cdots\}$ converges to an 
$L-$tight $R-$thick geodesic $\gamma_\infty = \{v_{0,\infty}, v_{1,\infty}, v_{2,\infty}, \cdots\}$ corresponding to the geometric limit $N_\infty$ of $\{N_n\}$.\\
2) There exists a least $i_0$ such that $L_{i_0,n} \to \infty$ as $n \to \infty$.
This corresponds to a limiting split block (Definition \ref{def-ltsplit}).\\}

	{
We want to describe the hierarchy for a limiting split  block now. 
In this case the geometric limit of hierarchies is a finite 
$L-$tight $R-$thick geodesic $\gamma_\infty = \{v_{0,\infty}, v_{1,\infty}, v_{2,\infty}, \cdots, v_{i_0,\infty}\}$ along with a fully $R-$thick semi-infinite geodesic ray $r_{i_0} = \{v_{,i_0-1\infty}=w_0,w_1, w_2, \cdots\}$ in $\CC(S\setminus v_{i_0,\infty})$. Then the geometric limit of hierarchies consists of
\begin{enumerate}
\item the base geodesic  $\gamma_\infty$
\item a fully $R-$thick semi-infinite geodesic ray $r_{i_0}$  in $\CC(S\setminus v_{i_0,\infty})$ subordinate to $v_{i_0,\infty}
\in \gamma_\infty$
\item the remaining hierarchy paths in $\CC(S\setminus v_{j,\infty})$, for $j<i_0$ are all of bounded length and fully $R-$thick.
\end{enumerate}}

\subsection{Geometrically finite subsurfaces in geometric limits}\label{sec-gftread}
	{We shall now fix a geometric limit $N$ of special split geometry doubly degenerate manifolds with parameters $D, \ep$ as in Section \ref{sec-geoltshyp}. Further, let $S_0$ denote a distinguished split surface in $N$. Let $B_1, \cdots B_k, \cdots$ denote
the split blocks in the positive end $N^+$ in case $N^+$ itself has special split geometry. If $N^+$ has limiting special split geometry, then there exists $i_0 \geq 1$ such that $N^+$ has $(i_0-1)$   split blocks $B_1, \cdots B_{i_0-1}$, and one limiting split block $B_{i_0}$. It is possible that $i_0=1$, so that $B_1$ is itself a limiting split block, and $N^+$ has no split blocks. Similarly, let $B_{-1}, \cdots B_{-k}, \cdots$ denote
the split blocks in the negative end $N^-$ in case $N^-$ itself has special split geometry. If $N^-$ has limiting special split geometry, then there exists $j_0 \geq 1$ such that $N^-$ has $(j_0-1)$  split blocks $B_{-1}, \cdots B_{-(j_0-1)}$, and one limiting split block $B_{-j_0}$. }

	{Now, let $T=\{\cdots,v_{-1}, v_0,v_1, \cdots\}$ be an $L-$tight 
$R-$thick tree corresponding to $N$, so that $T^+ = \{v_0,v_1, \cdots\}$ 
and $T^-=\{
\cdots,v_{-1}, 
v_0\}$ 
correspond to $N^+, N^-$ respectively. Further (Section \ref{sec-hierlts})
\begin{enumerate}
\item $T^+$ (resp. $T^-$) is infinite if $N^+$ (resp. $N^-$) has special split geometry,
\item $T^+ = \{v_0,v_1, \cdots, v_{i_0}\}$ 
(resp.\  $T^-=\{v_{-j_0},\cdots,v_{-1},v_0\}$) is finite
if $N^+$ (resp. $N^-$) has limiting special split geometry.
\end{enumerate}}

	{Let $S_0$ denote the base split surface in $N$ so that $B_i$ corresponds to the vertex $v_i$ for  $i>0$,  and $B_{-j}$ corresponds to the vertex $v_{-j+1}$ for  $j>0$. Let $S_{01}$ denote a connected component of $S_0 \setminus (i(v_0) \cup i(v_1))$.}

	{\begin{lemma}\label{lem-cutsurfacegf}
$S_{01} \subset N$ is geometrically finite with  no accidental parabolics, i.e.\ it has parabolics possibly  only along $i(v_0), i(v_1))$.
Further,  $i(v_1))$ (resp.\ $i(v_0)$) is parabolic if and only if $i_0=1$
(resp.\ $j_0=0$).
\end{lemma}}

\begin{proof}	{
The end-invariant of the end $N^+$ (at infinity) is given by 
\begin{enumerate}
\item a lamination $\LL^+ \in \partial \CC(S)$ if $N^+$ is  of special split geometry.  In this case, $\LL^+$ is arational, minimal, and fills $S$.
\item a lamination of the form $\LL^+_0 \cup i(v_{i_0})$, if $N^+$ is  of limiting special split geometry, where 
$\LL^+_0 \in \partial \CC(S\setminus i(v_{i_0}))$. 
In this case, $\LL^+_0$ is arational, minimal, and fills $S\setminus i(v_{i_0})$.
\end{enumerate}}

	{In either case, no leaf of either $\LL^+$ or $\LL^+_0$ can be contained in 
$S_{01}$. Finally, $v_{i_0}$ has distance $i_0$ from $v_0$ in $\CC(S)$. Hence
$i(v_{i_0})$ is homotopic into $S_{01}$ if and only if $i_0=1$.}

	{Similarly, the end-invariant of the end $N^-$ (at infinity) is given by 
\begin{enumerate}
	\item a lamination $\LL^- \in \partial \CC(S)$ if $N^-$ is  of special split geometry. In this case, $\LL^-$ is arational, minimal, and fills $S$.
	\item a lamination of the form $\LL^-_0 \cup i(v_{-j_0+1})$, if $N^-$ is  of limiting special split geometry, where 
	$\LL^-_0 \in \partial \CC(S\setminus i(v_{-j_0}))$. 
	In this case, $\LL^-$ is arational, minimal, and fills $S\setminus i(v_{-j_0})$.
\end{enumerate}
Again,  no leaf of either $\LL^-$ or $\LL^-_0$ can be contained in 
$S_{01}$. Finally, as before, $i(v_{-j_0})$ is homotopic into $S_{01}$ if and only if $j_0=0$.}

	{The proof is completed by an application of the covering theorem  (see \cite[Theorem 9.2.2]{thurstonnotes} or \cite{canary-cover}) which asserts in this case that the cover of $N$ corresponding to $\pi_1(S_{01})$ is geometrically finite since 
 $S_{01}$ does not admit a finite quotient bounding a degenerate end of $N$.
Finally,   $i(v_1)$ is a parabolic if and only if $i(v_{i_0})$ is homotopic into $S_{01}$ if and only if $i_0=1$. Similarly, $i(v_0)$ is a parabolic if and only if 
$i(v_{-j_0})$ is homotopic into $S_{01}$ if and only if $j_0=0$.}
\end{proof}

\section{The stairstep construction}\label{sec-stairstep} The rest of the paper is devoted to constructing $\pi_1-$injective tracks $\TT$ in surface bundles $M$ over graphs $\GG$ and proving that any elevation $\til{\TT}$ is quasiconvex   in the universal cover $\til M$ (see Appendix~\ref{sec-app}, particularly Definition~\ref{def-eiq} and Theorem~\ref{eiqimpliescube}).  

\begin{rmk}\label{rmk-horvert}
{A caveat about the usage of `horizontal' and `vertical'. For convenience of exposition below, we think of the fibers 
$S$ as horizontal below, and the tree $T$ in $S \times T$ as vertical.}
\end{rmk}

\subsection{The stairstep construction in 3-manifolds}\label{sec-stairstep3}
In this section, we motivate the general stairstep construction by describing it in the simpler setting of a 3-manifold $N$ fibering over the circle with fiber $S$.  

\begin{defn}\label{def-stairstep}{ A stairstep in $S \times [0,n]$ is constructed from the following:
	\begin{enumerate}
        \item A tight geodesic {$v_0, v_1, \dots, v_n$} in the curve graph  $\CC(S)$ so that the closed closed curves $\sigma_i$ on $S$ corresponding to $v_i$ are homologous to one another up to orientation.
		\item For each $i\in \{1,\ldots,n\}$ an essential subsurface $\tr_i\subset S\times \{i-\frac{1}{2}\}$ (called a \defstyle{tread}) with boundary equal to $(\sigma_{i-1} \cup \sigma_{i}) \times \{i-\frac{1}{2}\}$.
		\item For each $i\in \{0,\ldots,n\}$, an  annulus $\rs_i$ (called a \defstyle{riser})  given by $\sigma_{i} \times I_i$, where
                  \begin{equation*}
                    I_i =
                    \begin{cases}
                      [0,\frac{1}{2}] & i = 0\\
                      [i-\frac{1}{2},i+\frac{1}{2}] & 0<i<n\\
                      [n-\frac{1}{2},n] & i=n.
                    \end{cases}
                  \end{equation*}  
            \end{enumerate}
            }

	The union of the treads and risers $\cup_i \rs_i \bigcup \cup \tr_i$ will be referred to as a {\bf stairstep} in $S \times [0,n]$ and denoted as $\TT$.  See Figure \ref{fig:stairstep} for a schematic, where treads are horizontal and risers are vertical.

 Gluing $S \times \{0\}$ to $S \times \{n\}$ via a homeomorphism $\phi: S \to S$ taking $\sigma_0$ to $\sigma_n$ we obtain a surface (we will also call this a stairstep) $\TT_N$ in the mapping torus $N=S \times [0,n]/\sim_\phi$.  
\end{defn}

{
\begin{rmk}\label{rmk-tread}
The hypothesis that the $\sigma_i$'s are homologous to each other  (up to orientation) is what allows us to construct the treads in Item (2) in Definition
\ref{def-stairstep} above.
\end{rmk}}

\begin{figure}[H]
	\includegraphics[height=5cm]{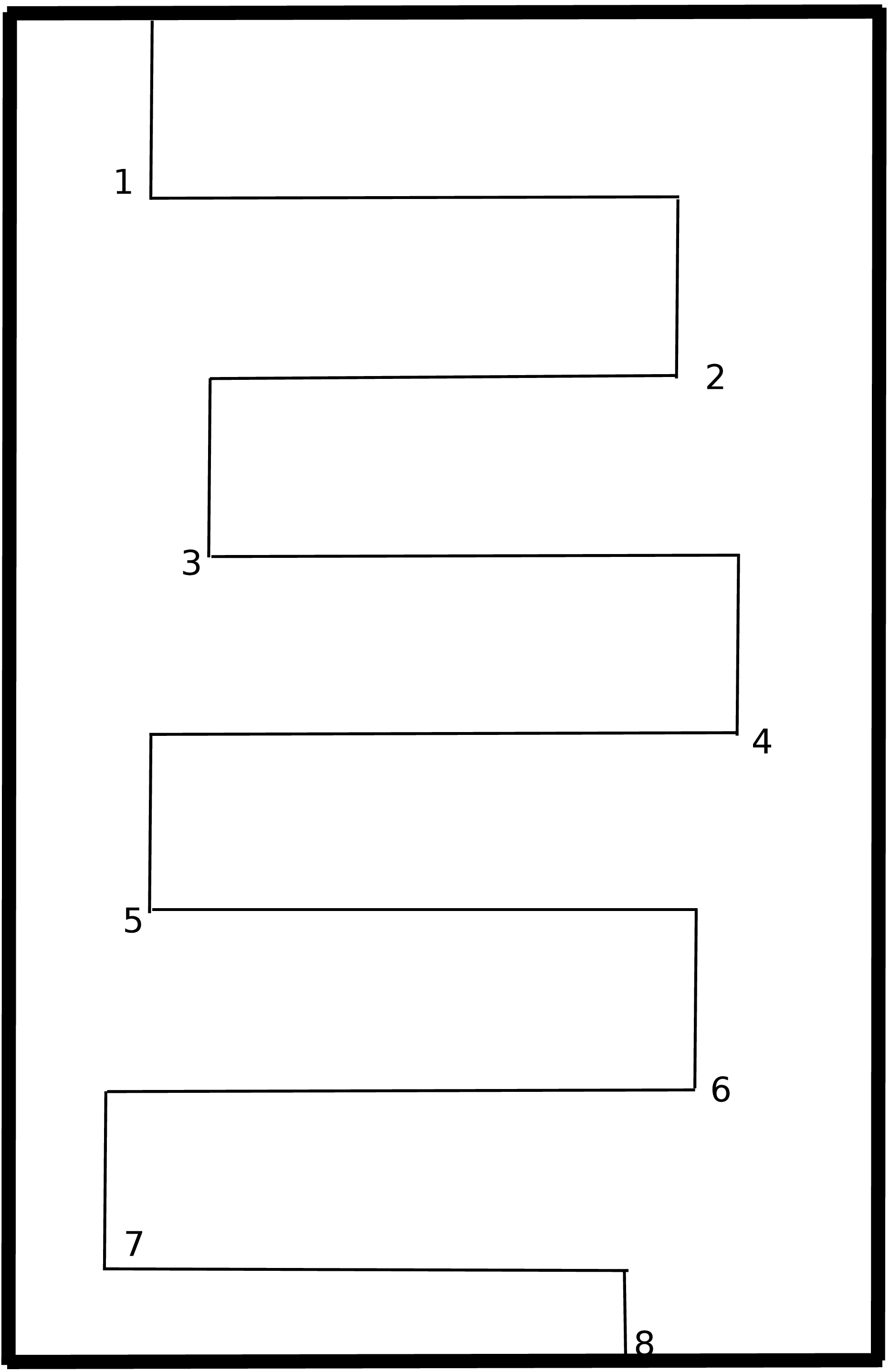}

	\medskip
	
	\caption{Stairstep in $S \times [0,n]$.}\label{fig:stairstep}
\end{figure}

\begin{eg} \label{eg-clr} An important motivating example is a geometrically finite surface constructed by
  Cooper-Long-Reid \cite[pp. 278-279]{clr}.  	In our language, what they build is a stairstep consisting of a single tread and riser.  	{
  Let $\phi: S \to S$ be a pseudo-Anosov map such  that there exists a nonseparating
  \emph{oriented} simple closed curve $\sigma$ satisfying the following:
  \begin{enumerate}
  \item $\sigma, \phi(\sigma)$ are  disjoint,
  \item $\sigma,  \phi(\sigma)$ denote the same nonzero class in $H_1(S)$.
  \end{enumerate}
  Set $\sigma_1=\sigma$ and 
$\sigma_0=\phi(\sigma)$. The tread in this context is the embedded essential subsurface of $S$
bounded by $\sigma_0$ and 
$\sigma_1$.}
	{
	As in Definition \ref{def-stairstep}, the union of the tread and riser in $S \times [0,1]$ is denoted as $\TT$ and referred to as a  stairstep  in the $I-$bundle. Here, $(\sigma_0, 0) = \TT \cap S \times \{0\}$ and $(\sigma_1, 1) = \TT \cap S \times \{1\}$. The gluing homeomorphism on the boundary of $S \times I$ is given   by (the identification map) $(\phi(x),0) \sim (x,1)$. Since  $\phi (\sigma_1) = \sigma_0$, the image of $\TT$ in $N$ is a stairstep surface $\TT_N$ given by identifying the boundary components of $\TT$ under the gluing homeomorphism.}
	
	{	Since $N$ is
	an oriented manifold, the gluing map from $S \times \{0\}$ to $S \times \{1\}$ must be orientation-reversing. Further, by requiring that  stairstep $\TT_N$ in the mapping torus $N$ is an oriented submanifold, the gluing map  from $\TT \cap (S \times \{1\})$ to $\TT \cap (S \times \{0\})$
	must also be orientation-reversing. This forces the co-orientation of $\TT$ to be preserved under the gluing map. Hence $\TT_N$ can be isotoped to be transverse to the suspension flow. 
Such a surface must be incompressible,
        as can be seen by lifting it to the universal cover and observing that any flowline can intersect the lift at most once.  (In fact, Cooper--Long--Reid prove not just incompressibility, but the stronger assertion that $\TT_N$ is Thurston norm minimizing).}

	{	To ensure  that the surface $\TT_N$ is geometrically finite (i.e.\ $\til{\TT_N}$ is quasiconvex in the universal cover $\til N$), \cite{clr} requires further that any elevation of $\TT_N$ to $\til N$ misses a flow-line.  That this suffices to show geometric finiteness uses some  machinery (either from the Thurston norm \cite[3.14]{clr} or from Cannon-Thurston maps); we refer to \cite{clr} for details.}
\end{eg}

\begin{rmk}\label{rmk-euler}
  Even if both the mapping torus $N$ and stairstep $\TT_N$ are orientable, it may not be possible in general to isotope $\TT_N$ to be transverse to the suspension flow, when there are multiple treads.  

  Indeed, let $e(T\FF) \in H^2(N)$ denote the Euler class of the tangent bundle to the foliation of $N$ by the fibers $S \times \{t\}$.  Fix an orientation on $\TT_N$. Let $\TT_N^+$ (resp. $\TT_N^-$) denote the collection of treads that are positively (resp. negatively) co-oriented with respect to the suspension flow in $N$.  Then  $\langle e(T\FF), \TT_N\rangle = \sum_{\tr_i \in \TT_N^+} \chi(\tr_i) - \sum_{\tr_i \in \TT_N^-} \chi(\tr_i)$ \cite{thurston-mams}.  If all the treads are not co-oriented in the same direction, it follows that $|\langle e(T\FF), \TT_N\rangle | < |\chi (\TT_N)| $ and hence $\TT_N$ cannot  be isotoped to be transverse to the suspension flow, cf.\ \cite[Expos\'e 14]{FLP}, especially the proof of  \cite[Theorem 14.6]{FLP} { due to Fried}. 

  We are sometimes able to use a different argument to prove incompressibility and quasi-convexity; see Sections \ref{sec-stairstepeg1} and \ref{sec-qctrack}. 
\end{rmk}

\subsection{The tree-stairstep}\label{sec-treestairstep}

\begin{defn}\label{def-tstairstep}  A tree-stairstep in the  topological model $M_T =S \times \BU(T)$ corresponding to a tight tree $T$ of homologous non-separating curves is built from the following data:
	\begin{enumerate}
		\item A tight tree $i:T\to \CC(S)$ of non-separating curves such that the simple closed curves $\{i(v): v \in V(T)\}$ on $S$ corresponding to $v$ are homologous to each other.
		\item For every pair of adjacent vertices $v, w$ of $T$, let $\tr_{vw}$ be an essential  subsurface  of the mid-surface $S_{vw}$ (cf.\ Definition \ref{def-topmodeltree}) with boundary equal to  $(i(v)\cup i(w))$. These  subsurfaces $\tr_{vw}$ shall be referred to as {\bf treads}. 
		\item For each $v\in T$, a Margulis riser $\rs_v$ in the topological building block $M_v$.
	\end{enumerate}
	The union  $\ttt$ of the treads and risers $\cup_v \rs_v \bigcup \cup \tr_{vw}$ will be referred to as a {\bf tree-stairstep} in $M_T =S \times \BU(T)$ associated to $i:T\to \CC(S)$.
\end{defn}

\subsection{Main Theorem}\label{sec-maintech}
The following is the main theorem of this paper and will be proven in Section \ref{sec-qctrack}.
\begin{restatable}{theorem}{maintech}\label{maintech} Given $R > 0$ and $V_0 \in \natls$, there exists $\delta, L_0, C \geq 0$ such that the following holds. Let $i:T \to \CC(S)$ be an $L-$tight  $R-$thick  tree of non-separating homologous curves  with $L\geq L_0$ such that the valence of any vertex of $T$ is at most $V_0$. Let $\ttt$ be a  tree-stairstep associated to $i:T\to \CC(S)$ and $\tttt$ be an elevation to $\til M_T$. Then
	\begin{enumerate}
		\item $(\til M_T,\dt)$ is $\delta-$hyperbolic.
		\item $\tttt$, {equipped with the induced path-pseudometric
		} is $C-$qi-embedded in $(\til{M_T},\dt)$.
		\item $\ttt$ is incompressible in $M_T$, i.e.\ $\pi_1(\ttt)$ injects into $\pi_1(M_T)$.
		\item If in addition there exists $L_1$ such that for every vertex $v$ of $T$ and for every pair of distinct vertices $u\neq w$ adjacent to $v$ in $T$, 
		\[d_{\CC(S \setminus i(v))} (i(u), i(w)) \leq L_1, \] then $\tmtdw$ is hyperbolic and $\tttt$ is quasiconvex in $(\til{M_T},\dw)$.
	\end{enumerate}
	
\end{restatable}

{
\begin{rmk}
In Item (2) of Theorem \ref{maintech} above, the ambient space  $(\til{M_T},\dt)$ is only a pseudometric space. Hence, in the presence of hyperbolicity, qi-embeddedness is a stronger condition than quasiconvexity. On the other hand, in 
Item (4) of Theorem \ref{maintech} above,
 $(\til{M_T},\dw)$ is a proper path metric space, and $\tttt$ is properly embedded. Hence, in the presence of hyperbolicity, qi-embeddedness coincides with quasiconvexity. This is the reason for the difference
in formulation of Items (2) and (4).
\end{rmk}}

\section{Examples of EIQ tracks}\label{sec-construction} In this section, we shall give some families of examples to which Theorem \ref{maintech} applies
(see also Definition~\ref{def-eiq} and Theorem~\ref{eiqimpliescube}).

We start with a  construction of embedded geometrically finite surfaces in hyperbolic 3-manifolds. We shall then construct EIQ tracks in complexes fibering over finite graphs and hence by Theorem \ref{eiqimpliescube} construct cubulable complexes whose fundamental groups $G$ are hyperbolic and fit into exact sequences of the form $$1\to \pi_1(S) \to G \to F_n\to 1.$$

\subsection{Stairsteps in 3-manifolds}\label{sec-stairstepeg1} Our aim here is to construct a hyperbolic 3-manifold fibering over the circle and a stairstep  in it. The first example of a stairstep we shall furnish has 2 treads and 2 risers. In a sense these are the simplest example of quasiconvex stairsteps. 
 \\  

{
	\noindent {\bf Marking graph $\MM(S)$:}  In the Lemma below, we shall use the 
the marking graph $\MM(S)$ from \cite{masur-minsky2}, where it is shown that the mapping class group acts properly, cocompactly by isometries on $\MM(S)$, and is
therefore quasi-isometric to $\MM(S)$ \cite[Section 7.1]{masur-minsky2}, \cite[p.1059-1060]{behrstock-minsky}. 

Recall first \cite[Section 2.5]{masur-minsky2} that a \emph{clean transverse curve}
$\beta$ to a simple closed curve $\gamma$ on a closed surface $S$ of genus greater than one is a simple closed curve 
in $S$ such that  
\begin{enumerate}
\item the subsurface $W$ of $S$ filled by $\gamma , \beta$ is either a one-holed torus or a 4-holed sphere,
\item  $d_{\CC(W)} (\gamma, \beta) =1$.
\end{enumerate}
We refer to $\gamma$ as the base curve of the clean transverse curve $\beta$
and denote it as $\rm{base} (\beta)$.

\begin{defn}\label{def-cm}\cite[p. 1059-1060]{behrstock-minsky}
 A \emph{clean marking} $\mu$ is a
pair $(\rm{base}(\mu), \rm{transversals})$ such that
\begin{enumerate}
	\item the \emph{base} $\rm{base}(\mu)$ of $\mu$ is a maximal clique in the curve graph
	$\CC(S)$,
	\item the transversals of $\mu$ consist of one clean transverse curve for each component of $\rm{base}(\mu)$. Further, each transversal $\beta$ is disjoint 
	from all curves of $\rm{base}(\mu)$ apart from $\rm{base} (\beta)$.
\end{enumerate}
\end{defn}

The vertices $\mu$ of the marking graph $\MM(S)$  are given by \emph{clean markings}. Edges of $\MM(S)$ are given by two kinds of elementary moves between markings, given by the \emph{twist} move and the  \emph{swap} move (see \cite[p. 1060]{behrstock-minsky} for details).

Let $A \subset S$ denote an essential annulus. The \emph{marking graph  $\MM(A)$ of the annulus $A$} is identified with the curve complex $\CC(S_A)$, where $S_A$ is the cover of $S$ corresponding to $\pi_1(A)$ \cite[Sections 2.4, 2.5]{masur-minsky2}. 
Definition~\ref{def-cm} differs slightly from, but is equivalent to, the original definition in \cite[Section 2.5]{masur-minsky2}.  Masur-Minsky define the marking,
not in terms of the clean transverse curve itself, but in terms of its projection to the annulus complex corresponding to its base.  Thus, clean markings  are given equivalently by
pairs $ (\gamma_i, \pi_{\gamma_i} (\beta_i))$, where
\begin{enumerate}
	\item $\{\gamma_i\}$ is a maximal clique in the curve graph
	$\CC(S)$,
	\item $ \pi_{\gamma_i}$ denotes the projection to the annulus complex corresponding to $\gamma_i$,
	\item $\beta_i$ is a clean transverse curve for $\gamma_i$,
	\item $\beta_i$ misses the other base curves of $\mu$
	\item the transversals to $\gamma_i$ are given by $\pi_{\gamma_i} (\beta_i)$.
\end{enumerate} }

{  We record the following Lemma for completeness and to set up the notation for Lemma~\ref{lem:exist_twists} below. {See \cite[Lemma 2.1]{behrstock-minsky} for a proof.}
 	\begin{lemma}\label{lem-qi2pdkt}
 	Let $MCG(S,\alpha)$ be the subgroup of $MCG(S)$ fixing a simple closed curve $\alpha$, i.e.\ $MCG(S,\alpha) = \, Stab(\alpha) \subset MCG(S)$.
 	Then $MCG(S,\alpha)$ is quasi-isometric to the product $MCG(S \setminus\alpha)\times \Z$, where $\Z$ is the mapping class group of an annular neighborhood $A$ of $\alpha$ (rel.\ boundary). 
 	\end{lemma}
 	}

For an essential annulus $A \subset S$,  let $d_A(. \, , \, .)$ denote distance between subsurface projections on $A$.

\begin{defn}\label{def-notwist} Let $R>0$.
For an element $\psi \in MCG(S,\alpha)$, the sequence $\psi^n$ is said to  
said to be {\bf renormalized by} $\tw_\alpha^{k_n}$ to  have {\bf  $R-$bounded Dehn twist} along $\alpha$ if, for all {$x \in \MM(S, \alpha)$}, 
$$d_{A} (x, \tw_\alpha^{k_n}\circ\, \psi^n(x)) \leq R.$$ 
\end{defn}

{
\begin{lemma}\label{lem:exist_twists}
 There exists $R_0 >0$ such that for all $R \geq R_0$ and $\psi \in MCG(S,\alpha)$,
 the following holds.\\ There exists a sequence
of renormalizing Dehn twists $\tw_\alpha^{k_n}$ as in Definition \ref{def-notwist} such that $\tw_\alpha^{k_n}\circ \psi^n(x)$
has  $R-$bounded Dehn twist along $\alpha$.
The same is true for $\tw_\alpha^{-k_n}\circ \psi^{-n}(x)$
\end{lemma}
\begin{proof}
Choose a complete clean marking $\mu \in \MM(S, \alpha)$. In particular, for every base curve $\gamma $ of $\mu$ other than $\alpha$, the transverse curve is disjoint from
$\alpha$, i.e.\ $\mu$ restricted to $S \setminus \alpha$ gives a clean marking on 
$S \setminus \alpha$. By definition of clean markings, the transversal 
$\beta$ to $\alpha$ misses every other base curve of $\mu$.

The marking $\mu_n=\psi^n (\mu)$ is again a clean marking satisfying the above conditions. We need to consider $\pi_{\alpha} (\mu_n)$. Since all other base curves
and their clean transverse curves 
are disjoint from $\alpha$, it follows that 
$$\pi_{\alpha} (\mu_n) = \pi_{\alpha} ( \psi^n(\beta)).$$
Next, note that $\langle\tw_\alpha\rangle$ acts {coboundedly} on $\MM(A)$,  where $A$ is the annular cover of $S$ corresponding to 
$\alpha$. Let $R_1$ denote the 
 diameter of the fundamental domain of the $\langle\tw_\alpha\rangle$-action on $\MM(A)$. {Note that
\cite[Lemma 2.3]{bkmm} $R_1$ may be chosen to be 2.}
Then for all $n \in \Z$, there exists $k_n \in \Z$   such that  
$$d_{A} ( \pi_{\alpha} (\beta), \tw_\alpha^{k_n}\circ\, \pi_{\alpha} (\psi^n(\beta)) \leq R_1.$$
Again, 
$$d_{A}(\tw_\alpha^{k_n}\circ\, \pi_{\alpha} (\psi^n(\beta)), 
\pi_{\alpha}  (\tw_\alpha^{k_n}\psi^n(\beta)) \leq R_1$$
for all $k_n \in \Z$. Choosing $R_0=2R_1$, we are through.

To prove the last assertion, observe that the action of 
$\partial \tw_{\alpha}^{-k_n}$ and $\partial \psi_A^{-1}$ pull back a point
in $\partial \tw_{\alpha}^{k_n} (\kappa_\psi^+) \cap \partial \psi_A (\kappa_\psi^+)$ to an intersection of base fundamental domains. The same argument
as above now furnishes the same conclusion for the inverses.
\end{proof}}

\begin{defn}\label{def-renpa} Let $R>0$.
	Let $\sigma \subset S$ be a multicurve.  Let $\psi:S \to S$ be a homeomorphism such that
	\begin{enumerate}
		\item $\psi$ fixes $\sigma$ point-wise.
		\item $\psi: W \to W$ is a pseudo-Anosov homeomorphism for every component $W$ of { $(S \setminus \sigma)$.}
		\item For every component $\sigma_i$ of $\sigma$, we choose $k_{in}$ such that $\tw_{\sigma_i}^{k_{in}}\, \circ\,\psi^n$ 
		and  $\tw_{\sigma_i}^{-k_{in}}\, \circ\,\psi^{-n}$ have $R-$bounded Dehn twist
		about $\sigma_i$.
	\end{enumerate}
Then {$\Pi_i \tw_{\sigma_i}^{k_{in}}\, \circ\,\psi^n$} is said to be a sequence of renormalized pseudo-Anosov homeomorphism of $S$ {\bf  in the complement of $\sigma$}, {where the product is taken over  all the components $\sigma_i$ of $\sigma$.}

Since an $R>0$ and a sequence $k_{in}$ as above always exists, we abbreviate this by saying that	 $\psi $ is  a  {\bf pseudo-Anosov homeomorphism of $S$  in the complement of $\sigma$}. Also, we shall denote the renormalized pseudo-Anosov homeomorphisms $\tw_{\sigma_i}^{k_{in}}\, \circ\,\psi^n$ by
$\overline{\psi^n}$.
\end{defn}

\begin{lemma}\label{lem-rthick} 
	Let $\sigma \subset S$ be a multicurve. 	
Let $\psi:S \to S$ be a   pseudo-Anosov homeomorphism of $S$   in the complement of $\sigma$, and let $A$ be an annular neighborhood of $\sigma$. Then there exists $R_0>0$ {and  renormalizations  $\overline{\psi^n}$} such that if {
\begin{enumerate}
	\item $U$ is a component  of $S \setminus A$,	
		\item  $\alpha$ is an essential simple closed curve on a geodesic in 
	$\CC(U)$ stabilized by $\psi|_U$,
	\item $W$ is
	\begin{itemize}
		\item either a  connected component of $A$,
	\item or a proper {non-annular} essential connected subsurface of $U$
	\item or an annular  essential subsurface of  $U$, not parallel to the boundary 
\end{itemize}
\item $n \in \Z$,
\end{enumerate}}
{then   $$d_W(\alpha, \overline{\psi^n}(\alpha)) \leq R_0,$$} {i.e.\ any geodesic in 
$\CC(U)$ stabilized by $\psi|_U$  is fully $R_0-$thick.
(Here, we assume, after passing to a finite power of $\psi$ ahead of time if necessary, that $\psi$ preserves every connected component of  $S \setminus A$.)}
\end{lemma}

\begin{proof}
For $W$ a  connected component of $A$, the existence 
of renormalizations $\bbar{\psi^n}$ is given by  Lemma~\ref{lem:exist_twists}.
This case now follows from the condition on $\bbar{\psi^n}$ that for every component $\sigma_i$ of $\sigma$, {the renormalizations $\overline{\psi^n}$} have uniformly bounded Dehn twist
about $\sigma_i$.

Next, let $W$ be
\begin{enumerate}
\item either a proper {non-annular} essential  subsurface of a component $U$ of $S \setminus A$, 
\item or an annular  essential subsurface of a component $U$ of $S \setminus A$, not parallel to the boundary of the component in which it lies.
\end{enumerate} {Here, $\psi|_U$ is a pseudo-Anosov homeomorphism.
This case now follows from Lemma \ref{lem-pA}.}
\end{proof}

\noindent {\bf Stairsteps in 3-manifolds: $2$ treads}\\
Let $\sigma_1, \sigma_2$ be homologous {distinct} non-separating simple closed curves on $S$. Let $\psi_i:S \to S$ be pseudo-Anosov homeomorphisms in the complement of $\sigma_i$, $i=1,2$. As in Definition \ref{def-renpa}, the renormalized pseudo-Anosov homeomorphisms will be denoted by $\overline{\psi_i^p}$.
{
For $i=1,2$, recall (Definition~\ref{def-renpa} and Lemma~\ref{lem:exist_twists}) that renormalizations satisfy the 
additional symmetric conditions
$$ \overline{\psi_i^{-p_i}}= (\overline{\psi_i^{p_i}})^{-1}.$$ 
For $p_1, p_2 \in \Z$,
define $$\Phi(p_1, p_2) =   \overline{\psi_2^{p_2}}.\overline{\psi_1^{p_1}}$$ and let $M(p_1, p_2)$ be the 3-manifold fibering over the circle with fiber $S$ and monodromy $\Phi(p_1, p_2)$. }

Since $\psi_i$ acts by 
a pseudo-Anosov homeomorphism on $(S \setminus \sigma_i)$, the translation length of $\overline{\psi_i^p}$ on the curve graph $\CC (S \setminus \sigma_i)$ is $O(p)$,
{i.e.\ there exists $C=C(\psi_1,\psi_2)$ such that for all $p \in \Z$ and $i=1,2$,
the translation length of $\overline{\psi_i^p}$ in $\CC (S \setminus \sigma_i)$  lies in $[|p|/C, C|p|]$. In particular, for any $u \in \CC (S \setminus \sigma_i)$, $d_{\CC (S \setminus \sigma_i)} (u, \overline{\psi_i^p} (u))$ is   $O(p)$. }

Further, by Lemma \ref{lem-rthick},
there exists $R$ such that any  geodesic in $\CC (S \setminus \sigma_i)$ preserved by $\psi_i$ (and hence by any of its powers or renormalized powers) is {fully $R-$thick.}

For  $i=1,2$, let $v_i$  be the vertex of $\ccs$ corresponding to $\sigma_i$. 
{\begin{lemma}\label{lemma-tg}
 Given $\psi_1, \psi_2$ as in the preceding discussion and any $L > 1$ there exists $p_0 > 1 $, such that
for all $p_1, p_2$ with $|p_1|, |p_2| \geq p_0$, the sequence $$T=\cdots, \Phi (p_1,p_2)^{-1}(v_1), \Phi (p_1,p_2)^{-1}(v_2), v_1, v_2, \Phi(p_1,p_2)(v_1), \Phi(p_1,p_2)(v_2),\cdots$$ is an $L-$tight  geodesic, where $\Phi(p_1,p_2)$ is defined as above.
\end{lemma}}
\begin{proof}
{	Since $v_1, v_2$ are adjacent vertices in $\CC(S)$, $[v_1, v_2]$  is a geodesic.
Since $\psi_2$ is a 
pseudo-Anosov in the complement of $\sigma_2$, for any $L\geq 3$,
there exists $p_0 > 1 $, such that
for all $ p_2$ with $|p_2| \geq p_0$, $\psi^{p_2}_2$ is an $L-$big rotation about $v_2$, i.e.\
$$d_{\CC(S \setminus v_2)} (v_1, \psi^{p_2}_2(v_1)) \geq L.$$
The sequence
 $v_1$, $v_2$, {$\psi^{p_2}_2(v_1)$} is  therefore $L-$tight. Consequently,  by Proposition \ref{isometrictighttree}, it is
	 an $L-$tight geodesic. }
	 
	{ Next,  there exists $p_0 > 1 $ (without loss of generality we can pick the same $p_0$), such that
	 for all $ p_1$ with $|p_1| \geq p_0$, $\psi^{p_1}_1$ is an $L-$big rotation about $v_1$, i.e.\
	 $$d_{\CC(S \setminus v_1)} (v_2, \psi^{-p_1}_1(v_2)) \geq L.$$}
	
{	Hence,  $ {\psi_1^{-p_1}}(v_2), v_1, v_2,$ {$\psi^{p_2}_2(v_1)$} is an $L-$tight sequence. By Proposition \ref{isometrictighttree} again, it is
	an $L-$tight geodesic. }
	
{Define $\Phi_0(p_1, p_2) =   {\psi_2^{p_2}}.{\psi_1^{p_1}}$ (without renormalization for convenience of notation), and
observe that $\Phi_0(p_1, p_2) ({\psi_1^{-p_1}}(v_2))={\psi_2^{p_2}}(v_2)=v_2$
and
	{$\Phi_0(p_1, p_2) (v_1)=\psi^{p_2}_2(v_1)$}, so that the sequence $ {\psi_1^{-p_1}}(v_2), v_1, v_2, \psi_2^{p_2}(v_1)$ equals (term-wise) the sequence
	$\Phi_0(p_1, p_2)^{-1} (v_2), v_1, v_2, \Phi_0(p_1, p_2) (v_1)$, where $\Phi_0(p_1, p_2)$ translates the first two vertices to the last two vertices.}

	{
	Finally, we observe that the above argument goes through verbatim with 
$	{\psi_i^{p_i}}$ replaced by $	\overline{\psi_i^{p_i}}$ and 
$\Phi_0(p_1, p_2)$ replaced by $\Phi(p_1, p_2)$.
Thus,  the sequence
$$T_0 = \Phi(p_1, p_2)^{-1} (v_2), v_1, v_2, \Phi(p_1, p_2) (v_1)$$ is an $L-$tight geodesic, where $\Phi(p_1, p_2)$ translates the first two vertices to the last two vertices. The union of translates $\Phi(p_1, p_2)^{j} (T_0): j \in \Z$ thus gives
the bi-infinite $L-$tight sequence $T$ in the statement of the lemma, where any subsequence of length 4 is an 
	$L-$tight geodesic. By Proposition \ref{isometrictighttree}, it is
	an $L-$tight geodesic. }
\end{proof}

{
	\subsection{$R-$thickness of $T$}
	
It remains to prove $R-$thickness of $T$. We shall draw heavily from the model manifold technology of \cite{minsky-elc1,minsky-elc2} and prove this in two steps:
\begin{enumerate}
\item In Lemma~\ref{lem-dgthick} and Corollary~\ref{cor-mppthick} we show the following. Recall the construction of  $M(p_1,p_2)$  in the discussion before Lemma~\ref{lemma-tg}.  Then the curves $\sigma_1, \sigma_2$ might well be realized by short geodesics as $p_1,p_2$ become large. We show that 
apart from Margulis tubes corresponding to these two curves,   injectivity radius for $M(p_1,p_2)$  is uniformly bounded away from zero independent of 
$p_1,p_2$, provided they are sufficiently large in absolute value. This involves using the geometry of the model manifold from  \cite{minsky-elc1,minsky-bddgeo}.
\item We then use the correspondence between short curves and large subsurface projections from \cite{minsky-elc1} along with the conclusion of step 1 to prove 
$R-$thickness of $T$.
\end{enumerate}.\\

\subsubsection{Model manifolds and injectivity radius}\label{subsub-model}
	We first observe
 that Lemma \ref{lem-pA} can be paraphrased as follows
	(and this is the formulation due to Minsky \cite{minsky-bddgeo}).
	Let $S$ be a surface possibly with punctures, and $\phi$ a pseudo-Anosov homeomorphism of $S$.
	Let  $\LL_+, \LL_-$ denote the stable and unstable laminations of $\phi$. Note that  $\LL_+, \LL_-$ are the ending laminations of a doubly degenerate hyperbolic
	3-manifold $N$ homeomorphic to $S \times \R$ obtained as a cover of a finite volume hyperbolic 3-manifold fibering over the circle with fiber $S$.
	Then there exists $R$ such that for any proper essential subsurface $W$ of $S$,
	$$d_W(\LL_+, \LL_-) \leq R.$$ This was generalized by Rafi \cite[Theorem 1.5]{rafi} to any 
	doubly degenerate hyperbolic 3-manifold $N$ homeomorphic to $S \times \R$ with ending laminations $\LL_+, \LL_-$. However, \cite[Theorem 1.1]{rafi} deals with a more general setup, where he establishes that `short curves in $N$ are exactly the boundary
	components of essential subsurfaces $Y \subset S$ such that projections of
	ending laminations $\LL_+, \LL_-$ to $Y$ are far apart
	in $\CC(Y)$, that is, $d_Y (\LL_+, \LL_-)$ is large.'

However, in the setup of Lemma~\ref{lemma-tg} above, involving $S, \sigma_1, \sigma_2, \psi_1, \psi_2$, we need \emph{uniform bounds} independent of $p_1, p_2$ provided they are large enough. 
This does not immediately follow from \cite{minsky-bddgeo} and \cite[Theorem 1.1]{rafi}. To obtain uniform bounds, we need to extract a geometric limit with $|p_1|, |p_2| \to \infty$.
This leads us to consider  hyperbolic 3-manifolds $N$ homeomorphic to $S \times \R$ with  end-invariants given as follows. Recall that $\psi_i$ is a 
	pseudo-Anosov on $S \setminus \sigma_i$. For $i=1,2$, let $\LL_{i+}, \LL_{i-}$ denote the stable and unstable laminations for $\psi_i$ thought of as supported in 
	$S \setminus \sigma_i \subset S$. Then the end-invariants of $N$ are defined  by
	\begin{enumerate}
		\item $\LL_-= \LL_{1-} \cup  \sigma_1$
		\item $\LL_+= \LL_{2+} \cup  \sigma_2$.
	\end{enumerate}
	Informally, we can think of $\LL_{1-}$ as $\psi_1^{-\infty} (\sigma_2)$ and 
	$\LL_{2+}$ as $\psi_2^{\infty} (\sigma_1)$. Thus, $N$ corresponds to $p_1 = -\infty,
	p_2 = \infty$ in the preceding notation. Then $N$ has two accidental parabolics corresponding to the curves $\sigma_1, \sigma_2$. Removing the (open) rank one cusps 
	corresponding to $\sigma_1, \sigma_2$ from $N$, we obtain $N_0$ with two ends
	$N_0^\pm$, where $N_0^+$ (resp.\ $N_0^-$) has the ending lamination $\LL_+$
	(resp.\ $\LL_-$). The existence of the complete hyperbolic structure on $N$ is really a consequence of the double limit theorem \cite{thurston-hypstr2} applied 
	to a pared manifold $(M, \PP)$, where $M=S \times [-1,1]$ and $\PP $ consists of
	annular neighborhoods $\AAA_1, \AAA_2$ of $\sigma_1 \times \{-1\}$ and $\sigma_2 \times \{1\}$
	in $S \times \{-1\}$ and $S \times \{1\}$ respectively. Choosing conformal structures
	$\tau_1, \tau_2$ on $S \times \{-1\} \setminus \AAA_1$ and $S \times \{1\} \setminus \AAA_2$ respectively,
	$N$ is the algebraic limit of the sequence of simultaneous uniformizations of 
	$(\psi_1^{-M} (\tau_1),\psi_2^{M} (\tau_2))$ furnished by the double limit theorem as $M \to \infty$. 

A more sophisticated description  yielding further details of the structure of $N_0$ can be obtained from the ending lamination theorem \cite{minsky-elc1,minsky-elc2} and 
	the combinatorial model therein. We refer the reader to the end of Section \ref{sec-hierlts}, where the hierarchy for a limiting split block was described. In the language of Section \ref{sec-hierlts}, both ends $N^+$ and $N^-$ are limiting split blocks with end-invariants $\LL_+$ and $\LL_-$ respectively.
	As in Section \ref{sec-hierlts},
	let $r_1=\{v_2, w_1, w_2, \cdots\}$ be a geodesic ray in $\CC(S \setminus v_1)$ joining $v_2$ to $\LL_{1-}$ and 
	$r_2=\{v_1, w_1', w_2', \cdots\}$ be a geodesic ray in $\CC(S \setminus v_2)$ joining $v_1$ to $\LL_{2+}$. Then the sequence of 1-simplices
	$$\{\cdots, (v_1,w_2),(v_1,w_1), (v_1,v_2),(w_1',v_2),(w_2',v_2),\cdots\}$$ 
	in the curve graph of $S$ gives rise to a hierarchy of geodesics in the sense of \cite{masur-minsky2}, where the base geodesic is given by $g=[v_1,v_2]$, and $r_1, r_2$ are subordinate to $g$ supported on $S \setminus v_1$,  $S \setminus v_2$ respectively. Completing $g, r_1, r_2$ to a hierarchy, a combinatorial model
	$M_0$ for $N_0$ can be built as in \cite{minsky-elc1}, and such a model is uniformly
	(depending only on the genus of $S$) bi-Lipschitz to $N_0$ (see Lemma \ref{lem-injradforLtRt} for instance, where it is  illustrated that the only short curves correspond to $v_1, v_2$). We note below that 
	$N_0$ has a lower bound on its injectivity radius.

\begin{lemma}\label{lem-dgthick}
Let $ N_0$ be as above. Then $N_0$ has injectivity radius uniformly bounded below.
\end{lemma}

\begin{proof}
 A  model closely related to the above model for $N_0$ can be built as follows.  Let $M^+$ be a singly degenerate manifold diffeomorphic to $(S \setminus
	\sigma_2) \times [1, \infty)$ so that its ending lamination is $\LL_{2+} $. Let 
	$M^+_0$ denote $M^+$ minus a neighborhood of the rank one cusps corresponding to
	$\sigma_2$. Note that there are two such cusps, since $(S \setminus
	\sigma_2)$ has two boundary curves. Similarly, let $M^-$ be a singly degenerate manifold diffeomorphic to $(S \setminus
	\sigma_1) \times  (-\infty, -1]$ so that its ending lamination is $\LL_{1-} $.
	Let $M^-_0$ denote $M^-$ minus a neighborhood of the rank one cusps corresponding to
	$\sigma_1$. Finally, let $B= S \times [-1,1]$. Note that we are yet to determine 
	a metric on $B$. We assume that $M^+$ and $M^-$ are smooth manifolds with boundary with metrics on the boundary induced by the restriction of hyperbolic metrics on \emph{complete}  singly degenerate manifolds.
	Glue $M^-_0$ (resp. $M^+_0$) to $S \times \{-1\}$ (resp.\ $S \times \{1\}$) via the identity on $(S \setminus
	N_\ep(\sigma_1)) \times \{-1\}$ (resp. $(S \setminus
	N_\ep(\sigma_2)) \times \{1\}$), where $N_\ep$ denotes  a small $\ep-$neighborhood. This induces a smooth metric  on $(S \setminus
	N_\ep(\sigma_1)) \times \{-1\}$ and $(S \setminus
	N_\ep(\sigma_2)) \times \{1\}$. Extend this smoothly to some metric on $B$ to obtain a smooth Riemannian metric on $M_0^\pm= M_0^- \cup B \cup M_0^+$. Fixing a base surface in $N_0$ and identifying it with $S \times \{0\} \subset B \subset 
	M_0^\pm$, the concluding argument in \cite[Section 6]{minsky-jams} using Sullivan's Theorem \cite{sullivan}, \cite[Theorem 6.1]{minsky-jams} shows that $M_0^\pm$ is bi-Lipschitz to $N_0$. Next $M_0^\pm$ clearly has injectivity radius uniformly bounded below, since its ends $M_0^-,  M_0^+$ do. Hence so does 
	$N_0$.
\end{proof}

\begin{cor}\label{cor-mppthick}
	Let $M(p_1,p_2)$ denote the  hyperbolic 3-manifold fibering over the circle with fiber $S$, and monodromy $\Phi(p_1,p_2)$ as in the discussion preceding Lemma~\ref{lemma-tg}. Let $M(p_1,p_2)_S$ denote the cover of $M(p_1,p_2)$ corresponding to $\pi_1(S)$. Then there exists $p_0>1$, and $\ep>0$,  such that
	for all $p_1,p_2$ with  $|p_1|,|p_2| > p_0$ the following holds. \\
	Let $T$ be the tight geodesic from Lemma~\ref{lemma-tg} and $w \in \CC(S) \setminus T$. Also, let $\sigma_w$ denote a simple closed curve on $S$ corresponding to $w$. Then the geodesic realization of  $\sigma_w$ in 
 $M(p_1,p_2)_S$ has length at least $\ep$. (We summarize this by saying that away from tubes around closed geodesics corresponding to the vertices of $T$, $M(p_1,p_2)_S$  has injectivity radius bounded below by $\ep$.)
\end{cor}

\begin{proof}
We can choose a base surface of bounded geometry in $M(p_1,p_2)$ and lift it to
a base surface in $M(p_1,p_2)_S$. We argue by contradiction, using a geometric limit argument. Suppose that the conclusion fails with 
$p_1, p_2 \to \infty$.

As
$p_1, p_2 \to \infty$, the model manifolds $M(p_1,p_2)_S$ converge geometrically to a model manifold  $M_0$  for $N_0$ constructed as in Lemma~\ref{lem-dgthick}. By Lemma~\ref{lem-dgthick}, $N_0$ has injectivity radius uniformly bounded below by some $\ep>0$.

Hence, there exists $p_0 > 0$ such that for all $p_1, p_2$ satisfying 
$|p_1|,|p_2| > p_0$ the following holds. 
 Except for the curves corresponding to translates of $v_1, v_2$ (under powers of $\Phi(p_1,p_2)$), all other closed geodesics in $M(p_1,p_2)_S$ have length at least $\ep$. This proves the Corollary. 
\end{proof}

\subsubsection{Injectivity radius and subsurface projections}
We now summarize the output from \cite{minsky-elc1,minsky-elc2} that we shall need. Below, we shall set $N_1$ equal to
\begin{enumerate}
\item either $N_0$ as in Lemma~\ref{lem-dgthick}, or
\item $M(p_1,p_2)_S$ as in Corollary~\ref{cor-mppthick}.
\end{enumerate}

In \cite[Lemma 10.1]{minsky-elc1}, Minsky shows that large subsurface projections give short curves:
\begin{lemma}\label{lem-largesubsurfimpliesshort}
Given $\ep > 0$, there exists $k_0 \geq 0$ such that if at least one  of the following hold:
\begin{enumerate}
\item there exists an essential non-annular subsurface $W$ of $S$ such that $d_W(\LL_+,\LL_-) \geq k_0$, where $\LL_+,\LL_-$ are as in Section~\ref{subsub-model}, and further, $v$ is a boundary component of $W$,
\item Let $\AAA$ denote the annulus corresponding to $v$, and 
$d_\AAA(\LL_+,\LL_-) \geq k_0$
\end{enumerate} then $v$ corresponds to a geodesic of length at most
$\ep$ in $N$.
\end{lemma}

In \cite[Theorem 9.11]{minsky-elc1}, Minsky shows the converse. We state a weaker version of this theorem below.
\begin{theorem}\label{thm-shortimpliessubsurf}
Given $k_0 \geq 0$  there exists $\ep > 0$, such that if both  of the following hold:
\begin{enumerate}
	\item for any non-annular subsurface $W$ of $S$ such that  $v$ is a boundary component of $W$, we have $d_W(\LL_+,\LL_-) \leq k_0$, 
	\item Let $\AAA$ denote the annulus corresponding to $v$, and 
	$d_\AAA(\LL_+,\LL_-) \leq k_0$
\end{enumerate} then $v$ corresponds to a geodesic of length at least
$\ep$ in $N$.
\end{theorem}

 We now have the following modified versions of Lemma \ref{lem-pA}.

{  \begin{cor}\label{cor-thick} Let $N_0, \sigma_1,\sigma_2$ be as in the discussion preceding Lemma~\ref{lem-dgthick}. 
		There exists $R$ such that
		for any proper essential subsurface $W$ of either 
		$(S \setminus
		\sigma_1)$ or $(S \setminus
		\sigma_2)$, $d_W(\LL_+, \LL_-) \leq R$.
\end{cor}}

\begin{proof}{  Let $W$ be as in
		the statement of the Corollary. If $W$ has a boundary component $\alpha$ other than $ \sigma_1, \sigma_2$, then  $\alpha$ cannot be a short curve by Lemma~\ref{lem-dgthick}. Hence, by Theorem~\ref{thm-shortimpliessubsurf} , i.e.\ \cite[[Theorem 9.11]{minsky-elc1}, the meridinal coefficients associated to a solid torus neighborhood of  $\alpha$ must be bounded, forcing subsurface projections to $W$ to be bounded. These bounds depend only on $S$.}
	
	{ It remains to consider $W$ a connected component of $ S \setminus
		(\sigma_1 \cup \sigma_2)$. There are finitely many such components. Since neither $\LL_+, \LL_-$ has a leaf in $W$, 
		$d_W(\LL_+, \LL_-)$ is finite in this particular case. Taking a maximum of such $d_W(\LL_+, \LL_-)$ over the finitely many components of $ S \setminus
		(\sigma_1 \cup \sigma_2)$, we are done.}
\end{proof}

{ We finally return to proving $R-$thickness of $T$.}
{\begin{lemma}\label{lem-2stepthick}
Let $T(=T(p_1,p_2))$ be as in Lemma \ref{lemma-tg}.  There exists $R$ and $p_0'$ such that for all $p_1, p_2$ satisfying $|p_1|, |p_2| \geq p_0'$, 
$T$ is $R-$thick.
\end{lemma}}

\begin{proof}{
Let $N_1= M( p_1,p_2)_S$ be as in Corollary~\ref{cor-mppthick}. Then  $T$ is the 
 base geodesic of the hierarchy corresponding to the model manifold  for $N_1$. The first set of
subordinate geodesics are translates (under powers of $\Phi(p_1,p_2)$) of geodesics in $\CC(S\setminus v_1)$ (resp.\ $\CC(S\setminus v_2)$) joining $v_2, \overline{\psi_1^{-p_1}} (v_2)$ (resp.\ $v_1, \overline{\psi_2^{p_2}} (v_1)$). We note that
since $\psi_1$ and $\psi_2$ are pseudo-Anosovs on $S \setminus i(v_1)$ and
$S \setminus i(v_2)$ respectively, 
\begin{enumerate}
\item the sequence $\{v_2, \psi_1^{-1}(v_2), \cdots,
\psi_1^{-p_1} (v_2)\}$ is a uniform (independent of $p_1$) quasigeodesic in  $\CC(S \setminus v_1)$
\item  the sequence $\{v_1, \psi_2(v_1), \cdots,
\psi_2^{p_2} (v_1)\}$ is a uniform (independent of $p_2$) quasigeodesic in  $\CC(S \setminus v_2)$.
\end{enumerate}
In particular,  these sequences lie in a uniformly bounded neighborhood of the geodesic rays $r_1, r_2$ in 
$\CC(S \setminus v_1)$ and $\CC(S \setminus v_2)$ described earlier. Here `uniform' means independent of $p_1, p_2$.}

By Corollary~\ref{cor-mppthick}, closed geodesics corresponding to vertices of $\CC(S)$ \emph{not} in $T$ have length at least $\ep$. 
Hence, by Theorem~\ref{thm-shortimpliessubsurf}, there exists  $R$ and $p_0'$ such that for all $p_1, p_2$ satisfying $|p_1|, |p_2| \geq p_0'$, $T$ is $R-$thick. 
\end{proof}

\begin{rmk} It was pointed out to us by the anonymous referee that 
Lemma~\ref{lem-2stepthick} admits an alternate proof using the hierarchical hyperbolicity/coarse median structures from \cite{bhs}.
This proof circumvents the use of the model manifold in Section~\ref{subsub-model}. Since we are interested also in identifying the short geodesics, we  have retained the proof here.
\end{rmk}
}

{
Lemma \ref{lemma-tg} and Lemma \ref{lem-2stepthick} combine to show that $T$ is 
$L-$tight and $R-$thick for all large enough $|p_1|,|p_2|$ so that Theorem \ref{maintech} applies.
To simplify notation, let us denote the cover of   $M(p_1, p_2)$ corresponding to $\pi_1(S)$ by $M_T$. }
Let $\ttt$ denote the associated stair-step in $M_T$ constructed equivariantly with respect to the $\Z-$action generated by  $\Phi(p_1,p_2)$. Let $\TT$ denote the quotient stairstep in  $M(p_1, p_2)$. Then, by Theorem \ref{maintech}   $\TT$ is geometrically finite in  $M(p_1, p_2)$.\\

We summarize the conclusion of the above  construction as follows:

\begin{prop}\label{qchierin3m}
	Let $v_1,v_2 \in \ccs$ be a pair of adjacent vertices in $\ccs$ such that $v_1, v_2$ correspond to homologous simple closed non-separating curves $\sigma_1, \sigma_2$. For $i = 1, 2$, let $\psi_i:S \to S$ be  a pseudo-Anosov  homeomorphism in the complement of $\sigma_i$.
{	There exists $p_0 > 1 $, such that
	for all $p_1, p_2$ with $|p_1|, |p_2|\geq p_0$, the following holds.}
	
	Let
	$\Phi(p_1, p_2)=\overline{\psi_2^{p_2}}.   \overline{\psi_1^{p_1}}$ and let $M(p_1,p_2)$ be the 3-manifold fibering over the circle with fiber $S$ and monodromy $\Phi(p_1, p_2)$. Then the associated stairstep surface $\TT$  in $M(p_1,p_2)$  is geometrically finite.
\end{prop}

\begin{rmk}\label{rmk-qchierin3m} The existence of a quasiconvex hierarchy for such $M(p_1,p_2)$
	now follows. Indeed,  Thurston's theorem 
{(see	\cite[Theorem 2.3]{thurston-bams} or \cite[p. 83]{otal}) along with a covering theorem  (see \cite[Theorem 9.2.2]{thurstonnotes} or \cite{canary-cover})} shows that the Haken hierarchy is quasiconvex provided that the first surface is so.  Cubulability of such manifolds 
	now follows from Wise's Theorem \ref{wise-hierarchy}). Theorem \ref{maintech2}  will
	generalize Theorem \ref{maintech} to the case of a tree of
	homologous curves (possibly separating). Thus Proposition \ref{qchierin3m} can be extended to prove the existence of a geometrically finite surface in a class of fibered hyperbolic   3-manifolds with first Betti number one.
 In \cite{sisto-jta}, 	Sisto found sufficient conditions on Heegaard splittings of rational homology 3-spheres $M$ to guarantee that they were Haken. The  construction of Proposition \ref{qchierin3m} may be regarded as an analog 
	of the main theorem of  \cite{sisto-jta} in the context of fibered manifolds $M$. 
\end{rmk}

\subsection{Tree  stairsteps}\label{sec-treestairstepeg} We will now generalize the above construction to surface bundles over graphs. Given $R>0$, choose
$L\gg 1$ as in Theorem \ref{maintech}. For  $j=1, \cdots, n$,
let $$\gamma_j:= \cdots, v_{-1,j}, v_0, v_{1,j}, \cdots$$ be  $R-$thick $L-$tight geodesics of homologous non-separating curves in $\ccs$  invariant under  pseudo-Anosov homeomorphisms $\Phi_j$ respectively (constructed as in Section \ref{sec-stairstepeg1}). Note that $v_0 $ belongs to $\gamma_j$ for all $j=1, \cdots, n$.
Since there is a unique non-separating curve on $S$ up to the action of the mapping class group $MCG(S)$ any tight geodesic of homologous non-separating curves may be translated by an element of $MCG(S)$ so that the image contains $v_0$ (and thus this is a rather mild condition).

For $j=2, \cdots, n$, and  $\Psi_j \in MCG(S,v_0)$,  
$\Psi_j \Phi_j \Psi_j^{-1}$ stabilize $\Psi_j (\gamma_j)$.
We think of $\Psi_j$'s as {\it rotations} about $v_0$ (in the spirit  of \cite{dgo}).

{
\begin{rmk}\label{rmk-dgo}
Though we never formally use any output of \cite{dgo}, Definitions 5.1 and 5.2 
and the ideas of Chapter 5 of \cite{dgo} have influenced this paper. The lengths of subsurface projections below replace the large rotation angles of   \cite[Chapter 5]{dgo}. The terminology of large rotations has been adopted in Definition \ref{def-largerot} below.
\end{rmk}}

 For convenience of notation, let $\Psi_1 \in MCG(S,v_0)$ be the identity map.
 { We shall choose the $\Psi_j$'s so that  the subsurface projections of geodesics $\Psi_j (\gamma_j)$ to $S \setminus i(v_0)$ are large. Further, let $\beta_1, \beta_2$ be any pair of distinct geodesic rays starting at $v_0$ and proceeding along any two distinct tight geodesics $\Psi_j (\gamma_j)$ with any orientation.
 Then we shall demand that $\beta_1\cup \beta_2$ is a tight geodesic whose
 subsurface projection to $S \setminus i(v_0)$ is also large.}
 
 See diagram below for $n=2$, where the thick vertical and horizontal lines indicate tight geodesics passing through the origin $v_0$. The thin broken lines indicate long tight geodesics in $\CC(S\setminus v_0)$ joining $v_{\pm 1, 1} \, , \, v_{\pm 1, 2}$.
{Their lengths  give a measure of  the subsurface projections.}
The  dotted lines indicate that all the vertices on the thin broken lines are at distance one from $v_0$ in $\CC(S)$ (we have drawn these only in the positive quadrant to prevent cluttering up the figure).

\begin{figure}[ht]
	
	\centering
	
	\includegraphics[height=6cm]{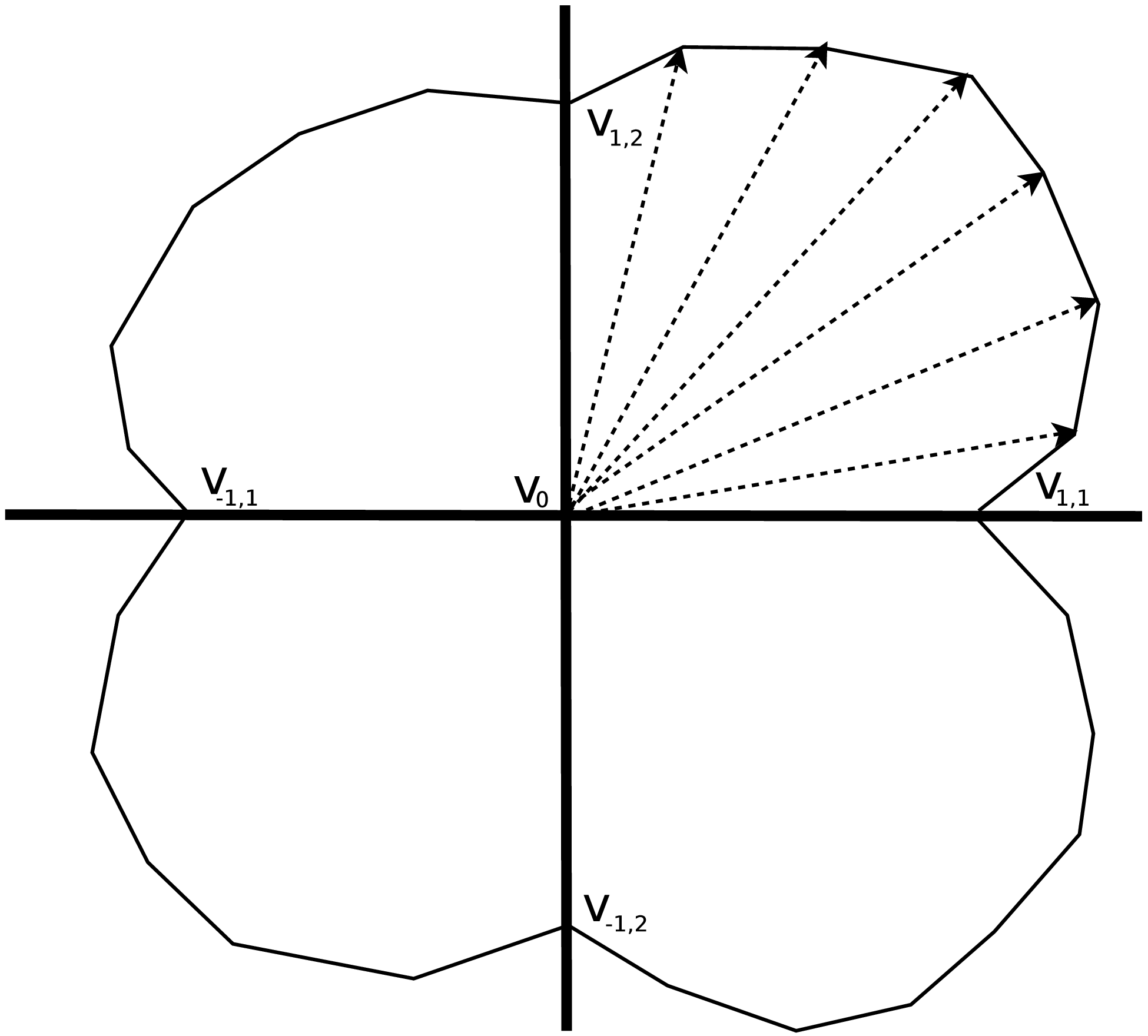}

	\bigskip
	
	\caption{Two tight geodesics meeting with large 
		{subsurface projections}}
	
\end{figure}

We describe the construction more precisely.
\begin{defn}\label{def-largerot} Suppose that   $\gamma_j = \{v_{ij}\}$, 
	$i \in \Z$, $j=1, \cdots , n$ is a finite collection of $L-$tight $R-$thick geodesics, all passing through $v_0 = v_{0j}$. 
	A collection $\Psi_j \in MCG(S,v_0)$, $j= 1, \cdots , m$ is said to be 
	a family of $(L,R)-${\bf large rotations} about $v_0$ for $\gamma_j, \,j=1, \cdots , n$ if
	\begin{enumerate}
	\item 	{ for $r=\pm 1$,  and  any  distinct $v, w \in \bigcup_j \{\Psi_j v_{r,j}\}$, we have
  $d_{\CC(S\setminus v_0)} (v,w ) \geq L$.
\item   for $r=\pm 1, \pm 2$,   any proper essential subsurface $W \subset
  S\setminus v_0$ and  any  distinct $v, w \in \bigcup_j \{\Psi_j v_{r,j}\}$, we have that
  $ d_W(v,w) \leq R$.
  \item Let $y,z$ be  vertices at distance one from $v_0$ on 
  $\gamma, \gamma' \in  \{\Psi_j (\gamma_j)\}$ respectively with $\gamma\neq \gamma'$. Further, let
  $x \in \gamma$ be such that $y$ lies between $v_0$ and $x$, and $d(x, v_0) $ equals 2 or 3. Then, for  any proper essential subsurface $W \subset
  S\setminus y$, $d_W(x, z) \leq R$.}
	\end{enumerate}
\end{defn}

\begin{prop}\label{ttforfn} There exists a universal constant $M$ such that for  $R>0$, we can choose $L_0\geq 3$ satisfying the following:\\
	For $L\geq L_0$, and for  $j=1, \cdots, n$, let $\gamma_j$  be  an $R-$thick $L-$tight-geodesic of homologous non-separating curves in $\ccs$  invariant under  pseudo-Anosov homeomorphisms $\Phi_j$. Further, assume that $\gamma_j/\langle \Phi_j\rangle$ has at least { 3 vertices (e.g.\ by choosing
	$\Phi_j$ of the form $\Phi(p_1,p_2)^2$, where $\Phi(p_1,p_2)$ 
	is as in in Proposition \ref{qchierin3m})}.
	Let $\Psi_j$'s be  $(L,R)-$large rotations about $v_0$ for $\gamma_j, \,j=1, \cdots , n$.

	Let $Q$ be the group generated by $\Psi_j \Phi_j \Psi_j^{-1}$, $j=1, \cdots, n$ (recall that $\Psi_1$ is the identity). Then the union of the  $Q-$translates of $\Psi_j (\gamma_j)$ is an $L-$tight $(R+M)-$thick tree $T$. Associated to $T$ is a $Q-$invariant track $\ttt$ in $M_T$ such that $\TT:= \ttt/Q$ is an EIQ track (cf.\ Definition~\ref{def-eiq}).
\end{prop}

\begin{proof} By choosing $L$ sufficiently large (depending only  the genus of $S$),
	the finite tree with vertices $\Psi_j (v_{-1j}), \Psi_j (v_{0j}), \Psi_j (v_{1j})$, $j=1, \cdots, n$ is a star $\star_1$ consisting of a single vertex $v_0=
	\Psi_j (v_{0j})$ (for all $j$) adjacent to $2n$ leaves $\Psi_j (v_{-1j}), \Psi_j (v_{1j})$, $j=1, \cdots, n$. Since 
	\begin{enumerate}
	\item each $\gamma_j$ is $L-$tight, each sequence $\Psi_j (v_{-1j}), \Psi_j (v_{0j}), \Psi_j (v_{1j})$ is $L-$tight for $j=1, \cdots, n$.
	\item each $\Psi_j$ is an $(L,R)-$large rotation any triple $w_{-1}, v_0, w_1$
	with $w_{-1} \in \{\Psi_k (v_{-1k}),  \Psi_k (v_{1k})\}, 
	w_{1} \in \{\Psi_l (v_{-1l}),  \Psi_l (v_{1l})\}$ with $k \neq l$ is  $L-$tight.
	\end{enumerate}
Let $\beta_j =[v_{0j}, v_{f_jj}]\subset \gamma_j$ denote a fundamental domain for the action of the cyclic group $\langle \Phi_j \rangle$ on $\gamma_j$. Then 
$$\star_e:= \bigcup_j \Psi_j (\Phi_j^{-1} (\beta_j) \cup \beta_j)$$ again has a distinguished
central vertex $v_0$ and $L-$tight geodesic segments corresponding to 
$\Psi_j (\Phi_j^{-1} (\beta_j))$ and $\Psi_j (\beta_j)$ emanating from $v_0$. 
We refer to $\star_e$ as the \emph{extended star about $v_0$}. Since $\star_1$ is
$L-$tight as also each $\gamma_j$ (and hence each $\Psi_j(\gamma_j)$), the 
extended star $\star_e$  is also $L-$tight. Translating $\star_e$ by the group $Q$, 
we observe that $L-$tightness is preserved. (It suffices to check this at each terminal vertex of $\star_e$, where checking under the generators of $Q$ suffices). By Proposition \ref{isometrictighttree}, this  furnishes an isometrically embedded
tree $T$ in $\CC(S)$. Hence,
	   $Q$ is a convex cocompact free subgroup of $MCG(S)$ of rank $n$ preserving 
	   the isometrically embedded $L-$tight tree $T$. 
	   
	   We now invoke the hypothesis that $\gamma_j/\Phi_j$ has at least 3 vertices
	   and prove $R'-$thickness for some $R'$ depending on $R$ and $S$.
	   Two cases arise:
	   \begin{enumerate}
	   \item[Case 1:] vertices of valence $2n$,
	   \item[Case 2:] vertices of valence $2$.\\
	   \end{enumerate}
	   
	  \noindent {\it Case 1:} By $Q-$invariance, it suffices to check $R-$thickness for $\star_e$.  {By Remark \ref{rmk-local}, the Bounded Geodesic Image theorem yields a universal constant $M$, such that } $(R+M)-$thickness of $\star_e$ follows from $R-$thickness of the $\Psi_j (\gamma_j)$'s and the hypothesis that $\Psi_j$'s are $(L,R)-$large rotations about $v_0$ for $\gamma_j, \,j=1, \cdots , n$.\\
	  
	   { \noindent {\it Case 2:} Let $v$ be a vertex of valence $2$.  By $Q-$invariance, we may assume that $v$ lies on $\gamma_j$
	    strictly between $v_{0j}
	    $ and $\Phi_j(v_{0j})$ for some $j$. By Remark \ref{rmk-local}, it suffices to check
	   $R-$thickness for a pair of distinct vertices $v_-, v_+$ on $T \subset \CC(S)$ at 
	   distance at most 2 from $v$. At least one of these vertices must lie on 
	   $\gamma_j$ since $\gamma_j/\Phi_j$ has at least 3 vertices.
	   If both vertices are on $\gamma_j$, then $R-$thickness follows from
	   thickness of $\gamma_j$. Else, without loss of generality, assume that
	   \begin{enumerate}
	   \item  $v_- \in \gamma_j$, $v_+ \in \gamma_i$ with $i \neq j$, and
	   \item $d_{\CC(S)} (v, v_0) =1$.
	   \end{enumerate}
We can assume further that $v_-=w_{-3}, w_{-2} , w_{-1}=v, v_0, w_1=v_+$ are  5 vertices in order with $w_{-3}, w_{-2} , w_{-1}=v, v_0$ on $\gamma_j$, and 
$v_0, w_1$ on $\gamma_i$.
	   We know from the \emph{last} hypothesis on $(L,R)-$large rotations about $v_0$, that
	   for any proper essential subsurface $W\subset S \setminus v$,
	   $d_W(z, w_1) \leq R$ for $z =w_{-3}, w_{-2}$. $R-$thickness at $v$ follows.}

	The last statement now follows from Theorem \ref{maintech}.
\end{proof}

{
\begin{rmk}\label{rmk-nonreg}
Note that $T$ is not regular. This is because $v_0$ has valence $2n$, while every
other vertex in $\star_e$ that is not a terminal vertex has valence 2.
\end{rmk}}

We now have the following application of Theorem \ref{maintech}. It gives one of the main new examples of cubulable groups provided by this paper:

\begin{theorem}\label{maineg1} Let $Q$ be a convex cocompact free subgroup of $MCG(S)$ of rank $m$ as in Proposition \ref{ttforfn} above.
	Let $$1 \to \pi_1(S) \to G \to Q_m \to 1$$ be the induced exact sequence of hyperbolic groups. Then $G$ admits a quasiconvex hierarchy and  is cubulable and virtually special. 
\end{theorem}

\begin{proof}Proposition \ref{ttforfn} above furnishes
	the existence of an EIQ track $\TT$. Theorem \ref{eiqimpliescube} now shows that $G$ admits a quasiconvex hierarchy. Hence, by
	Wise's	
	Theorem \ref{wise-hierarchy}, $G$   is cubulable and virtually special.
\end{proof}

{\subsection{An example}\label{sec-eg} To conclude this section, we shall give an example satisfying the hypotheses of 
Proposition~\ref{ttforfn}. The idea is best explicated in the simplest case with $n=1$,
so that there exist
\begin{enumerate}
\item  one $L-$tight $R-$thick geodesic $\gamma$ stabilized by
a pseudo-anosov $\Phi$,
\item  and one rotation $\Psi$ about $v_0 \in \gamma$. 
\end{enumerate}
The crucial hypothesis to check is that of the $(L,R)-$large rotation $\Psi$. While
the construction is quite direct, the proof is indirect and invokes model manifolds \cite{minsky-elc1} again.

Let $\gamma$ be an $L-$tight $R-$thick geodesic stabilized by a pseudo-anosov $\Phi=\Phi(p_1,p_2)$ constructed as in Lemma~\ref{lemma-tg}. Recall that 
$\Phi(p_1, p_2) =   \overline{\psi_2^{p_2}}.\overline{\psi_1^{p_1}}$, and we assume without loss of generality that $\psi_1$ is a pseudo-anosov on $S\setminus v_0$. Let $\ell_{v_0}$ denote the  tree-link of $v_0$ in $\gamma$, i.e.\ when $v_0$ is thought of as a vertex of the tight geodesic
$\gamma$. 

Recall now from Lemma~\ref{lem-tlink} the 
construction of the tree-link $T_{v_0}$ of $v_0$. We describe the construction of 
$T_{v_0}$ in the present special case.
 Let the restriction $\psi_1|(S\setminus v_0)$ be denoted by $\theta_1$.
Note that $\ell_{v_0}$ is a subsegment of a bi-infinite tight geodesic $\ell \subset \CC (S\setminus v_0)$ stabilized under the action of  $\theta_1$ on $\CC (S\setminus v_0)$.
 Then,  $T_{v_0}$ is a tree approximating  the convex hull of $\ell_{v_0} \cup \Psi (\ell_{v_0})$ by Lemma~\ref{lem-tlink}. There are four terminal vertices
 of $T_{v_0}$:
 \begin{enumerate}
 \item the image under $\P_v$ (see Lemma~\ref{lem-tlink}) of the terminal vertices $\ell_{v_0}^\pm$ of $\ell_{v_0}$ that we denote
 by  $\ell_{0}^\pm$ for convenience. Let $\ell_0$ denote the geodesic in 
 $T_{v_0}$ joining these two points.
 \item the image under $\P_v$ (see Lemma~\ref{lem-tlink}) of the terminal vertices $\Psi (\ell_{v_0})^\pm$ of $\Psi (\ell_{v_0})$ that we denote
 by  $\ell_{1}^\pm$ for convenience. Let $\ell_1$ denote the geodesic in 
 $T_{v_0}$ joining these two points.
 \end{enumerate}
 
 As usual, there are two possibilities that could arise:
 \begin{enumerate}
 \item $\ell_0 \cap \ell_1 \neq \emptyset$, where $\ell_0, \ell_1$ \emph{cross}.
 In this case, let $\ell_0 \cap \ell_1 = \ell_X$.
 \item $\ell_0 \cap \ell_1 = \emptyset$, where $\ell_0, \ell_1$ \emph{do not cross}. In this case, let $\ell_X$ denote the shortest geodesic in $T_{v_0}$ joining $\ell_0, \ell_1 $.
 \end{enumerate}

\noindent {\bf Basic condition:} Let $\theta_2$ denote the conjugate of $\theta_1$ by
the restriction $\Psi|(S\setminus v_0)$, so that $\theta_2$ stabilizes the 
tight geodesic $\Psi(\ell) \subset \CC (S\setminus v_0)$.

\begin{condn}\label{cond-coco}
We choose 
$\Psi$ and $p_1$ such that the subgroup  $\Theta = \langle \theta_1^{p_1}, \theta_2^{p_1} \rangle$ generated by 
$\theta_1^{p_1}, \theta_2^{p_1}$ is a convex cocompact subgroup isomorphic to the free group
$\mathbb{F}_2$ freely generated by $\theta_1^{p_1}, \theta_2^{p_1}$.
\end{condn}

Note that any $\Psi$  not lying in the normalizer of $\theta_1$ conjugates it to 
a pseudo-anosov $\theta_2$ with different attracting and repelling laminations.
Hence for all $|p_1|$ sufficiently large, Condition~\ref{cond-coco} is satisfied
as the group generated by a pair of pseudo-anosov homeomorphisms with distinct 
attracting and repelling laminations satisfy the Tits' alternative, and large enough powers $\theta_1^{p_1}, \theta_2^{p_1}$ of $\theta_1, \theta_2$ give the
required convex cocompact subgroup (see \cite{kl-coco,farb-coco,hamen} for 
background on convex cocompact subgroups).

Such a choice of $p_1$ is consistent with Proposition~\ref{qchierin3m}. In the argument below, we might 
need to choose $|p_1|$ still larger. We remind the reader of the notion of split geometry from
Definition \ref{def-splsplit} and Figure~\ref{schematic2},  the notion of a topological building block from Section~\ref{sec-topbb} and the model metric
$\dw$ from Theorem~\ref{model-str}. \\

\noindent {\bf Metric $\dw$ on $ M_{v_0}$:}
 Recall that the topological
building block corresponding to the $T_{v_0}$ is given topologically by
$T_{v_0}\times S = M_{v_0}$. Since $\ell, \Psi(\ell) \subset \CC (S\setminus v_0)$
are stabilized by pseudo-anosov homeomorphisms, there exist thick Teichm\"uller
geodesics  $\ell_T, \Psi(\ell_T) \subset \Teich (S\setminus v_0)$ having the same ending laminations as $\ell, \Psi(\ell) \subset \CC (S\setminus v_0)$ respectively.

Hence, there exist $R_1$ such that for any essential proper subsurface 
$W \subset  (S\setminus v_0)$, the subsurface projections $d_W(\tau_1, \tau_2)
\leq R_1$, where $\tau_1, \tau_2$ are the markings corresponding to any two 
of the points $\ell_0^\pm, \ell_1^\pm$. Note that $\ell_0^\pm, \ell_1^\pm$ are a priori defined as points in $\CC (S\setminus v_0)$, but since 
$\ell_T, \Psi(\ell_T) \subset \Teich (S\setminus v_0)$  are thick Teichm\"uller
geodesics, they give coarsely well-defined points of $ \Teich (S\setminus v_0)$.
Note also that $\ell_0^\pm, \ell_1^\pm$ tend to the  ending laminations giving the ideal end-points of $\ell, \Psi(\ell) $ as $|p_1| \to \infty$. Thus, the 
topological
building block $M_{v_0}$ corresponding to the tree-link $T_{v_0}$ inherits a natural metric given as follows. Let $\RR_{v_0}$ denote the Margulis riser.
Then  $M_{v_0}\setminus \RR_{v_0}$ is 
 bi-Lipschitz homeomorphic to the thick part of the universal curve over $T_{v_0}$ (thought of as embedded in $\Teich (S\setminus v_0)$. The bi-Lipschitz constants 
 are dependent only $R_1$ and the surface $S$.

Let  $\beta \subset T_{v_0}$ be any geodesic segment joining any two of 
$\ell_0^\pm, \ell_1^\pm$. Let $(M_\beta,\dw)$ be the metric on  
the special split block $S \times \beta$ given by the restriction of the bundle 
$M_{v_0}$ to $\beta$. Then $M_\beta \cap \RR_{v_0}$ is the standard annulus
$A_\beta$
in the split block $(M_\beta,\dw)$. Further, 
the injectivity radius of $(M_\beta \setminus A_\beta)$ is bounded below in terms of $R_1$ and the surface $S$.
\\

\noindent {\bf Metric  associated to rays $\gamma \setminus v_0$, and 
	$\Psi(\gamma \setminus v_0)$:} \\ Let $\cdots,
v_{-1}, v_0, v_1, \cdots$ be the tight sequence of vertices defining the tight geodesic $\gamma$.  By hypothesis, there exist $L, R$ such that $\gamma$ is $L-$tight, $R-$thick. Let
$(M_\gamma,\dw)$ denote the special split geometry model given by Theorem~\ref{model-str}. Then there exists a special split block 
$(M_{\gamma,v_0},\dw)$ corresponding to
$v_0 \in \gamma$ such that the markings on (the two components of) its boundary $\partial M_{\gamma,v_0}$
are at a uniformly bounded distance from $\ell_0^\pm$ independent of $p_1$. This follows from the fact that $\bbar{\psi_1^{p_1}}$ have been chosen to be renormalized in Lemma~\ref{lemma-tg}.
Then $(M_\gamma \setminus \, {\rm Int}(M_{\gamma,v_0}) )$ consists of two model
manifolds $M_\gamma^\pm$, where $\partial M_\gamma^+$ (resp.\ $\partial M_\gamma^-$) has a marking at a uniformly bounded distance from $\ell_0^+$
(resp.\ $\ell_0^-$) independent of $p_1$.

A similar argument shows that 
\begin{enumerate}
\item there exists a special split block 
$(M_{\Psi(\gamma),v_0},\dw)$ corresponding to
$v_0 \in \gamma$ such that the markings on (the two components of) its boundary $\partial M_{\Psi(\gamma),v_0}$
are at a uniformly bounded distance from $\ell_1^\pm$ independent of $p_1$. 
\item $(M_{\Psi(\gamma)} \setminus \, {\rm Int}(M_{\Psi(\gamma),v_0}) )$ consists of two model
manifolds $M_{\Psi(\gamma)}^\pm$, where $\partial M_{\Psi(\gamma)}^+$ (resp.\ $\partial M_{\Psi(\gamma)}^-$) has a marking at a uniformly bounded distance from $\ell_1^+$
(resp.\ $\ell_1^-$)  independent of $p_1$.
\end{enumerate}

We are finally in a position to prove:

\begin{prop}\label{prop-lrrotexist}
Let $N \in \natls$ be such that $\Phi, \Psi$ satisfy Condition~\ref{cond-coco} for all 
$|p_1| \geq N$. Then there exists $M \geq N$. $L \geq 3, R \geq 1$ such that for all $|p_1| \geq M$,  $\Psi$ is an $(L,R)-$large rotation about $v_0$.
\end{prop}

\begin{proof}
Let $M_\gamma^\pm$, $M_{\Psi(\gamma)}^\pm$ be as above. For any pair $M^+,
M^- \in \{M_\gamma^\pm, M_{\Psi(\gamma)}^\pm\}$, there exists $\beta\subset T_v$
such that the markings on the boundary components $M_\beta^+$, $M_\beta^-$ 
comprising $\partial M_\beta$ lie at a bounded distance from the markings on
$\partial M^+,
\partial M^-$ respectively, where the bounds depend on $R_1$ defined above
and the surface $S$.

Thus, the union $M^+ \cup  M_\beta \cup M^-$ admits a combinatorial split geometry model, such that away from the Margulis risers, the injectivity radius is bounded below. 
Hence, all hierarchy paths supported on proper essential subsurfaces of $S\setminus
v$ for $v$ a vertex of $\gamma \cup \Psi(\gamma)$ (including $v_0$) are uniformly
bounded by some $R$ as required by Definition~\ref{def-rthick}. 

In the proof of Proposition~\ref{ttforfn}, we have already 
established   the existence of $M$ such that $\gamma \cup \Psi(\gamma)$
is  $L-$tight   for all $|p_1| \geq M$. The Proposition follows.
\end{proof}

\begin{rmk}\label{rmk-resolution}
The model geometry of a doubly degenerate 3-manifold contains the information of a hierarchy path \emph{along with a resolution of the hierarchy} in the sense of \cite{minsky-elc1}. In the proof above, we have implicitly used this 
information about the resolution by extracting it from the model geometries of
$M_\gamma$, $M_{\Psi(\gamma)}$. What the above proof does is `splice' two such resolutions together along a hierarchy path for $\beta$. 
\end{rmk}
 }
 \section{Quasiconvexity of treads}\label{sec-treads} For the purposes of this section, we fix a closed surface $S$ of genus at least 2. The main theorem of this section is the following:

 \begin{restatable}{theorem}{treadsunifqc}\label{treadsunifqc}
	Given $R>0$, $V_0 \in \natls$, there exists $C \geq 0$ such that the following holds.\\
	Let $T$ be an $L-$tight $R-$thick tree of homologous non-separating curves in $\ccs$ with valence at most $V_0$ and $L\geq 3$.
	 Let  ${\ttt}$  be a tree-stairstep (cf.\ Definition \ref{def-tstairstep}) in $M_T$. Let $v,w$ be a pair of adjacent vertices of $T$,  $S_{vw}$ be the corresponding mid-surface, and $\tr_{vw}$ be a tread with boundary  $i(v)\cup i(w)$. Then
	any elevation $\ttrvw$ is $C-$quasiconvex in $\tmtdt$.
\end{restatable}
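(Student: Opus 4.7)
The plan is to verify the criterion of Proposition \ref{effectiveqc}. Granting that $\tmtdt$ is $\delta_0$-hyperbolic with $\delta_0$ depending only on $R$ and $V_0$ (Theorem \ref{mainprel}, for $L$ sufficiently large), it suffices to establish for any elevation $\ttrvw$ two properties: (a) uniform quasiconvexity within the fiber $X_{vw} = P^{-1}(vw)$, and (b) uniform flaring in all directions in $\tmtdt$.

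Part (a) is immediate from surface geometry. Fix a hyperbolic metric $h$ on $S$ and take $i(v), i(w)$ as geodesic representatives. Then $\trvw$ is a compact essential subsurface of $(S,h)$ with totally geodesic boundary, and its preimage in $\til S = \Hyp^2$ is a disjoint union of convex regions bounded by bi-infinite hyperbolic geodesics, hence quasiconvex with constant depending only on the genus of $S$.

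Part (b) is the core of the argument. Any geodesic ray in $\but$ emanating from the mid-point vertex $vw$ extends to a bi-infinite geodesic lying over some bi-infinite geodesic $l$ in $T$ through the edge $[v,w]$, so it suffices to show uniform flaring along each such $l$; equivalently, uniform quasiconvexity of $\ttrvw$ in the sub-bundle $(\til M_l, \dt)$. By Theorem \ref{model-str}, there is a uniform bi-Lipschitz homeomorphism between $N_l^0$ and $M_l^0$, where $N_l$ is the associated doubly degenerate hyperbolic $3$-manifold of special split geometry (Proposition \ref{prop-splsplit}). I will show that every lift of the tread is uniformly quasiconvex in $\til N_l$ with constants depending only on $R$ and $V_0$. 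The key facts are: the boundary curves $i(v), i(w)$ are the cores of splitting tubes of $N_l$; and by $R$-thickness together with the Bounded Geodesic Image Theorem, the subsurface projections of the ending laminations $l_\pm$ onto every essential subsurface of $S \setminus (i(v) \cup i(w))$ are uniformly bounded. Combined with the structure of the Cannon-Thurston map for $\pi_1 S \hookrightarrow \pi_1 N_l$ (as in \cite{mahan-split, mahan-rafi}), these imply that the Cannon-Thurston map identifies no pair of endpoints of $\partial \ttrvw$, so $\ttrvw$ is quasiconvex in $\til N_l$. Transferring across the bi-Lipschitz equivalence yields uniform quasiconvexity in $(\til M_l, \dw)$, and partial electrification of the $S^1$-direction of each riser (passing from $\dw$ to $\dt$) preserves it, since the electrified direction is transverse to the tread. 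Applying the converse direction of Proposition \ref{effectiveqc} within the sub-bundle then gives flaring in all directions along $l$, and since $l$ was arbitrary, flaring in all directions in $\tmtdt$.

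The main obstacle is the uniformity of the quasiconvexity constant for $\ttrvw$ in $\til N_l$, independent of the choice of $l$. To extract uniform constants I would argue by contradiction using pointed Gromov-Hausdorff limits, as in Section \ref{sec-geolts}: if constants blew up along a sequence $l_n$, pass to a geometric limit $N_\infty$ (with the compatible model limit via Lemma \ref{weldandgeoltcommute}). The possible limits are either doubly degenerate manifolds of special split geometry or manifolds with rank-one cusps arising from splitting tubes whose heights tend to infinity (Remark \ref{rmk-rk1cuspinlimitsplit}). In either case the bounded subsurface projection of the ending laminations and the correct placement of $\partial \ttrvw$ in the limit boundary persist, contradicting the purported loss of quasiconvexity. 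This compactness/limit argument is the technical heart of the proof.
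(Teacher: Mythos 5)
Your proposal follows essentially the same route as the paper: reduce to bi-infinite geodesics $l$ through the edge $[v,w]$, establish uniform quasiconvexity of the tread in the associated split-geometry manifold $\til N_l$ by a contradiction/geometric-limit argument (the paper's Lemma \ref{qcMl}, where the key point is that no ending lamination or rank-one cusp of the limit can be supported in the interior of the cut-open subsurface), transfer to $(\til M_l,\dt)$ via the bi-Lipschitz model and partial electrification (Lemma \ref{tubelpresqc}, Proposition \ref{qcMlel}), and then apply the converse and forward directions of Proposition \ref{effectiveqc} to pass from quasiconvexity along each line to flaring in all directions and hence quasiconvexity in $\tmtdt$. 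You correctly identify the geometric-limit compactness argument as the technical heart, which is exactly where the paper spends its effort.
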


The proof of Theorem \ref{treadsunifqc} will occupy this section. We will need to recall  some technology first.
\subsection{Laminations, Cannon-Thurston Maps and quasiconvexity}\label{sec-qcct}
\begin{defn} Let $H$ be a hyperbolic subgroup of a hyperbolic group $G$. 
	Let $\Gamma_H, \Gamma_G$ denote  Cayley graphs of $H, G$ with respect to finite generating sets. Assuming that  the
	generating set of $G$ contains the generating set of $H$, let
	$i : \Gamma_H \rightarrow \Gamma_G$  denote the inclusion
	map. Let $\widehat{\Gamma_H}$ and
	$\widehat{\Gamma_G}$ denote the Gromov compactifications of $\Gamma_H, \Gamma_G$.

	A {\bf Cannon-Thurston map} for the pair
	$(H,G)$ is a map 
	$\hat{i} : \widehat{\Gamma_H} \rightarrow \widehat{\Gamma_G}$ 
	which is a continuous
	extension of $i$. \end{defn}

We shall denote the Gromov boundaries of $H, G$ by  $\partial{H}$,
$\partial{G}$  respectively. Note that these are independent of the choice of finite generating sets.

\begin{theorem}\cite[p. 527]{mitra-ct}\label{ctexist}
	Let $G$ be a hyperbolic group and let $H$ be a hyperbolic normal subgroup
	of $G$. Let 
	$i : \Gamma_H\rightarrow\Gamma_G$ be the inclusion map.
	Then a Cannon-Thurston map exists for the pair $(H,G)$, i.e.\ $i$ extends to a continuous
	map $\hat{i}$ from
	$\widehat{\Gamma_H}$ to $\widehat{\Gamma_G}$.
\end{theorem}

\begin{defn}
	An {\bf algebraic lamination} \cite{bfh-lam, chl07,  kl10, kl15, mitra-endlam} for a hyperbolic group $H$ is an
	$H$-invariant, flip invariant, closed subset $\LL \subseteq \partial^2
	H =(\partial H \times \partial H \setminus \Delta)$, where $(x,y)\to (y,x)$ is called the flip, and $\Delta$ is the diagonal in $\partial H \times \partial H$.
\end{defn}

\begin{defn} Suppose that a Cannon-Thurston map exists for the pair $(H,G)$.
	Let
	$\Lambda_{CT} = 
	\{ (p,q) \in \partial^2{H} \mid
	\hat{i} (p) = \hat{i} (q) \}$.
	It is easy to see that $\Lambda_{CT}$ is an algebraic lamination. We call it the
	\defstyle{Cannon-Thurston lamination}.
\end{defn}

\begin{lemma}\cite[Lemma 2.1]{mitra-pams}\label{qccrit} Let $G$ be a hyperbolic group and let $H$ be a hyperbolic  subgroup. Then
	$H$ is quasiconvex in $G$ if and only if a  Cannon-Thurston map exists for the pair $(H,G)$ and $\Lambda_{CT}  = \emptyset$.
\end{lemma}

\begin{theorem}\label{kldmr1}\cite[Theorem 1.3]{kld-coco}	Let $$ 1 \to H \to G \to Q \to 1$$ be an exact sequence of hyperbolic groups, where $H$ is  a  surface group.
	Let $L$ be  a finitely generated  infinite index subgroup  of $H$. Then $L$ is quasiconvex in $G$. 
\end{theorem}

For a convex cocompact subgroup $Q$ of $MCG(S)$, it was shown in \cite{farb-coco,kl-coco} that $\partial Q$ embeds canonically in the projectivized measured lamination space $\PML(S) = \partial \Teich(S)$ \cite[Theorem 1.1]{farb-coco} and also in the ending lamination space $\EL(S)=\partial \CC(S)$
\cite[Theorem 1.3]{kl-coco}. The associated  map from $\partial Q$, thought of as a subset of $ \PML(S)$, to $\partial Q$, thought of as a subset of $  \EL(S)$, simply forgets the measure. Thus, $\partial Q \subset \EL(S)$ parametrizes a family of ending laminations. The ending lamination corresponding to $z \in \partial Q$ will be denoted as $\Lambda_z$.
\begin{theorem}\label{ctstr}\cite[Theorem 3.5]{mahan-rafi}
	Let $1\to H \to G \to F_n \to 1$ be
	an exact sequence
	of hyperbolic groups, with $H$ a closed surface group.
	Then a Cannon-Thurston map exists for the pair $(H,G)$ and the Cannon-Thurston lamination is given as a  union of  laminations $\Lambda_z$ parametrized by $\partial F_n$: 
	$$\Lambda_{CT} = \bigcup_{z \in \partial F_n} \Lambda_z,$$
	{where $\Lambda_z$ as  a relation on $\partial H$ is given by the transitive closure of the relation induced by an ending lamination. Thus, $\Lambda_z$ consists of an ending lamination along with all diagonal leaves in any complementary ideal polygon.}
\end{theorem}

\subsection{Uniform quasiconvexity of treads in  split geometry manifolds}  Let  $l$ be  an $L-$tight $R-$thick tree whose underlying topological space is homeomorphic to $\R$.
Recall from Section { \ref{sec-splitgeo} }  that   the associated doubly degenerate 3-manifold  $N_l$ is of special split geometry. As in Section \ref{sec-geolts}, let $\{ B_i \}$ denote split blocks, $S_i = B_{i-1} \cap B_i$ denote split surfaces, and $\tau_i(+), \tau_{i}(-)$ be the geodesics on $S_i$ corresponding to the core curves of the splitting tubes $\T_i, \T_{i-1}$. For convenience of notation, we shall also refer to the splitting tubes $\T_i, \T_{i-1}$ as 
$\T_i(+), \T_{i}(-) $ respectively. Also  recall that there exists $D \geq 1$ such that any split surface is of $D-$bounded geometry.

Let $S_{c,i} \subset S_i \setminus
(\tau_i(+) \cup \tau_{i}(-))$ denote a component of the surface $S_i$ cut open along the curves  $\tau_i(+), \tau_{i}(-)$. Let $\til{S_{c,i}} $ be an elevation of ${S_{c,i}} $ to the universal cover $\til N_l$. Let $\SSSS_i \subset \til N_l$ denote the set obtained by adjoining to
$\til{S_{c,i}} $ all the elevations of $ \T_i (+)$ and  $ \T_{i}(-)$ that abut $\til{S_{c,i}} $. Recall that $l_i$ is the `height' of the standard annulus  in the welded split block $B_i$. 
By the construction of tree-links, $l_i$ is approximately equal to $d_{\CC(S\setminus v_i)} (v_{i-1},v_{i+1})$ (up to a constant depending on $R$ alone). 

{It will be helpful to look at the cover $N_l(i)$ of $N_l$ corresponding to $\pi_1(S_{c,i})$. Then there is a distinguished elevation 
	$\widehat{S_{c,i}} $ of 
${S_{c,i}} $ in  $N_l(i)$, inducing an isomorphism of fundamental groups. Further, there exist two elevations each of $ \T_i (+)$ and  $ \T_{i}(-)$ that abut 	$\widehat{S_{c,i}} $ (since the curves $\tau_i(\pm)$ are non-separating). Let $\SSS_i$ denote the set in  $N_l(i)$ obtained by adjoining these elevations of $ \T_i (+)$ and  $ \T_{i}(-)$ to $\widehat{S_{c,i}} $. Then $\SSS_i$ is the image in  $N_l(i)$ of the unique $\pi_1(S_{c,i})-$invariant translate of $\SSSS_i $ in $\til N_l$. Henceforth, we shall refer to this particular translate of $\SSSS_i $ in $\til N_l$ as the base elevation 
$\SSSS_i $. Further, let $CH(\SSSS_i)$ denote the convex hull of the limit set of  $\SSSS_i $ in $\til N_l$,
and $CC(\SSS_i)$ denote the convex core of  $N_l(i)$. Note that  $CC(\SSS_i)$  is the quotient of 
$CH(\SSSS_i)$ under $\pi_1(S_{c,i})$.}

\begin{rmk}[Dependence of constants]\label{rmk-dep}
  Before we state the next Lemma, we briefly recount for the convenience of the reader, the implicit dependence of constants involved. Uniformity of the quasiconvexity constant $C$ in Lemma \ref{qcMl} below is crucial in our argument. The statement of Lemma \ref{qcMl} shows that it depends on $R$  (the parameter determining $R-$thickness). It also depends on the genus of the surface $S$; but this has been fixed at the outset. In the final proof of Theorem \ref{treadsunifqc},
    the quasiconvexity constant $C$ will depend also on the valence of the tree $T$.
	The constant $C$ certainly depends on the parameters $D, \ep$ of special split geometry in Definition \ref{def-splsplit}; but as shown in Proposition \ref{prop-splsplit}, the  parameters $D, \ep$ depend in turn only on $R$ (and implicitly on the genus of $S$). 
\end{rmk}
We shall prove that

\begin{lemma}\label{qcMl} Given $R>0$, there exists $C \geq 0$ such that the following holds.\\
	Let $l$ be an $L-$tight $R-$thick tree of homologous non-separating curves in $\ccs$ whose underlying topological space is homeomorphic to $\R$ and let $N_l$ be the corresponding doubly degenerate manifold of special split geometry. Let $\SSSS_i$ be as above.
	Then, for all $i$, $ \SSSS_i$ is $C-$quasiconvex in $\til N_l$.
\end{lemma}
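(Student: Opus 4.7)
The plan is to argue by contradiction using the geometric limit machinery of Section~\ref{sec-geolts}, in direct analogy with the proof of Lemma~\ref{thicktreadqcinml} but in the special split geometry setting rather than in bounded geometry. Suppose the lemma fails: for the fixed $R$ and fixed genus one obtains a sequence of counterexamples, namely $L_n$-tight $R$-thick lines $l_n\cong \R$, doubly degenerate special split geometry manifolds $N_{l_n}$ with uniform parameters $D,\ep$ (Proposition~\ref{prop-splsplit}), indices $i_n$, components $S_{c,i_n,n}\subset S_{i_n,n}$ and lifts $\til{S_{c,i_n,n}}$ such that the associated $\SSSS_{i_n,n}$ fails $n$-quasiconvexity in $\til N_{l_n}$. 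Reindex so that $i_n=0$, fix base split surfaces $S_{0,n}$ with base-points $x_n\in S_{c,0,n}$, and pass to a pointed geometric limit $(N_\infty,S_{0,\infty},x_\infty)$ as described in Section~\ref{sec-geolts}.

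After passing to further subsequences, each height $h_{i,n}$ either stays bounded or tends to infinity, so each $B_{i,n}$ converges either to an honest split block or to a limiting split block containing a rank one cusp (Definition~\ref{def-ltsplit}, Remark~\ref{rmk-rk1cuspinlimitsplit}). Consequently $N_\infty$ is a tame hyperbolic $3$-manifold homeomorphic to $S\times\R$, each of whose ends is either doubly degenerate of special split geometry or terminates in a cusped limiting block; the $D$-bounded geometry surface $S_{0,n}$ converges to a $D$-bounded geometry $S_{0,\infty}\subset N_\infty$; the multicurves $(\tau_0(+)\cup\tau_0(-))_n$ converge to curves whose geodesic representatives either lie on splitting tubes of $N_\infty$ or on rank one cusps of $N_\infty$; the component $S_{c,0,n}$ converges to a component $S_{c,0,\infty}$; and $\SSSS_{0,n}$ converges along the chosen lifts to a set $\SSSS_{0,\infty}\subset\til{N_\infty}$.

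The key step is then to invoke Canary's covering theorem (the extension of Theorem~\ref{kldmr} allowing tame hyperbolic $3$-manifolds with cusps): since $S_{c,0,\infty}$ is an essential proper subsurface of $S_{0,\infty}$, its fundamental group embeds in $\pi_1(N_\infty)\cong\pi_1(S)$ as a finitely generated subgroup of infinite index, hence the corresponding cover of $N_\infty$ is geometrically finite; equivalently, the $\pi_1(S_{c,0,\infty})$-orbit in $\til{N_\infty}$ is relatively quasiconvex with respect to the cusp subgroups. Adjoining the lifts of all splitting tubes and rank one cusps of $N_\infty$ abutting $\til{S_{c,0,\infty}}$, which is exactly what defines $\SSSS_{0,\infty}$, absorbs precisely these parabolic (or virtually cyclic) subgroups, so $\SSSS_{0,\infty}$ is absolutely quasiconvex in $\til{N_\infty}$ with some constant $C_\infty$. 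The bi-Lipschitz identifications of arbitrarily large balls in $\til N_{l_n}$ with balls in $\til{N_\infty}$, whose bi-Lipschitz constants tend to $1$ (a consequence of pointed Gromov--Hausdorff convergence together with Lemma~\ref{weldandgeoltcommute}), then force $\SSSS_{0,n}$ to be quasiconvex with a constant close to $C_\infty$ for all large $n$, contradicting the counterexample hypothesis and yielding the uniform constant $C=C(R)$.

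The hard part will be the limiting-cusp case: one must verify that relative quasiconvexity in the limit $\til{N_\infty}$, where splitting tubes have been replaced by rank one cusps, transfers to absolute quasiconvexity in each $\til N_{l_n}$ with a uniform constant. This amounts to showing that a splitting tube must be wide enough before it behaves, for the purposes of absorbing quasigeodesic excursions, like a cusp; it should follow from the fact that the welding procedure and geometric limits commute (Lemma~\ref{weldandgeoltcommute}) together with the uniform lower bound $\ep$ on the injectivity radius outside splitting tubes, which prevents any non-cuspidal degeneration in the limit.
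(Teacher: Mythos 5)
Your overall strategy -- contradiction via a sequence of worse and worse counterexamples and the geometric limits of Section \ref{sec-geolts} -- is the same as the paper's. But the key step is black-boxed in a way that hides the actual content of the proof. You assert that $\pi_1(S_{c,0,\infty})$ is geometrically finite in $N_\infty$ by citing ``the extension of Theorem \ref{kldmr} allowing tame hyperbolic $3$-manifolds with cusps.'' Theorem \ref{kldmr} is stated only for doubly degenerate manifolds \emph{without parabolics}; there is no off-the-shelf statement that a finitely generated infinite index subgroup of the fiber group of a cusped tame $S\times\R$ manifold is geometrically finite \emph{and} free of accidental parabolics. Deriving that from Canary's covering theorem requires knowing the end invariants of $N_\infty$: one must check that every degenerate end of $N_\infty$ is based on a subsurface $W$ (namely $S$ or $S\setminus v$ for a vertex $v$ of $l$) in which $S_c$ sits as a proper essential subsurface, so that its ending lamination -- every leaf of which is dense in a lamination filling $W$ -- cannot be carried by $S_c$. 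That verification is precisely the paper's three-way case analysis on whether the heights $l(m\pm)$ stay bounded, and it uses the $L$-tightness of $l$ (curves $v_j$ with $|j-i|\ge 2$ are at curve-graph distance $\ge 2$ from $\tau(\pm)$, hence essentially intersect $\partial S_c$). None of this appears in your proposal.

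A second, related omission: geometric finiteness of $\pi_1(S_c)$ is not by itself enough. You claim that adjoining the cusps/tubes abutting $\til{S_c}$ ``absorbs precisely these parabolic subgroups,'' but $\SSSS_{0,\infty}$ only adjoins the cusps along the \emph{peripheral} curves $\tau(\pm)$. If a non-peripheral curve of $S_c$ were parabolic in $N_\infty$ (an accidental parabolic), $\til{S_c}$ would penetrate arbitrarily deeply into a cusp that is not adjoined, and $\SSSS_{0,\infty}$ would fail to be quasiconvex. The paper's Claim \ref{claim-el} treats exactly this alternative (accidental parabolic versus degenerate end), and the case analysis rules out both by showing every rank one cusp of the limit corresponds to a curve not supported by $S_c$. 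Finally, you identify ``the hard part'' as transferring quasiconvexity from the limit back to the $N_{l_n}$; the paper sidesteps this direction entirely by running the contradiction the other way -- the counterexample points of the convex core at distance $d_n\to\infty$ from $\s^h_n$ survive into the limit and directly produce the degenerate end or accidental parabolic of Claim \ref{claim-el} -- which is the cleaner and standard way to extract uniformity from a geometric limit.
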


\begin{proof} Note that quasiconvexity of $\SSSS_i$ was already known by Theorem \ref{kldmr1}. The effective dependence of $C$ on $R$ is what we establish now.
		We argue by contradiction.
	Suppose that the statement is not true. 
	We carry out a geometric limit argument { and use Lemma \ref{lem-cutsurfacegf} to arrive at a contradiction. }
	
	Let $\{(N_m, x_m)\}$ be  a sequence of worse and worse counterexamples, where we assume that $x_m$ lies on a  split surface $S(m) \subset N_m$ (we use the notation $S(m)$ to distinguish from a sequence $S_i$ of split surfaces exiting the end of a fixed $N_m$). {More precisely, after passing to a subsequence if necessary, we can assume that for every $m$, there exists $w_m \in CH(\SSSS_m)$, such that the distance between $w_m$ and $\SSSS_m$ in $N_m$ is at least $m$. Equivalently, there exists $z_m \in CC(\SSS_m)$ such that the distance between $z_m$ and $\SSS_m$ in the cover $N_m(m)$ corresponding to $\pi_1(\SSS_m)$ is at least $m$.  }

	Then, (after passing to a subsequence if necessary), a geometric limit  $N_\infty$ exists, $S(m)$ converges to a split surface $S(\infty) \subset N_\infty$ (see paragraph following Definition \ref{def-ltsplit} for terminology). Let $\tau(m+), \tau(m-)$ denote
	the distinguished curves on $S(m)$ homotopic to core curves of splitting tubes $\T(m+), \T(m-)$ abutting $S(m)$ (recall that splitting tubes are neighborhoods of Margulis tubes). Let $B(m+), B(m-)$ denote the split blocks of $N_m$ containing $\T(m+), \T(m-)$ respectively. Also, let $l(m+), l(m-)$ denote the heights of $B(m+), B(m-)$ respectively.

	 A connected component  $S_{c} (m)$ of $ S(m) \setminus \tau(m+) \cup \tau(m-)$ converges to a connected subsurface $S_c$ of  $S(\infty)$. 
	Let $\til{S_c}$ denote an elevation  of $S_c$ to $\til N(\infty)$. Let $\T(\infty,+), \, \T(\infty,-)$ denote the geometric limits of $\T(m+), \T(m-)$ respectively in $N_\infty$. Then $\T({\infty,-}), \, \T({\infty,+})$ are either splitting tubes in split blocks or rank one cusps in limiting split blocks (Definition \ref{def-ltsplit}). Construct $\SSSS$ from
	$\til{S_c}$ by adjoining abutting elevations of $\T({\infty,-}), \, \T({\infty,+})$ as in the discussion preceding the Lemma. Note also that the pair of simple  closed curves $\tau(m+), \tau(m-)$  converge to a pair of simple  closed curves $\tau(+), \tau(-)$ on $S(\infty)$ corresponding to the core curves of $\T({\infty,-}), \, \T({\infty,+})$. Also, $\tau (+) \cup \tau(-)$ are the boundary curves of $S_c$. As elements of the curve graph $\ccs$,  $\tau(m+)=\tau (+), \tau(m-)=\tau(-)$ for $m$ large enough. 
	
	{Let $N_\infty(S_c)$ denote the cover of $N_\infty$ corresponding to $\pi_1(S_c)$ and $CC(S_c)$ denote its convex core. There exists  a unique $\pi_1(S_c)-$invariant translate of $\SSSS_\infty$, that we denote
by $\SSSS_\infty$ for convenience. Let 		 
		$\SSS_\infty$
		denote the image of  $\SSSS_\infty$ in $N_\infty(S_c)$ under the covering projection from $\til N_\infty$ to $N_\infty(S_c)$.
		Then $CC(\SSS_m)$
	converges geometrically to $CC(S_c)$. From the second paragraph of this proof, there exist $y_m \in CC(S_c)$ such that the distance from $y_m $ to 	$\SSS_\infty$ is at least $m$ in $N_\infty(S_c)$. Hence, $N_\infty(S_c)$ is either geometrically infinite or contains a contains a cusp corresponding to a simple closed curve $\beta$ other than $\tau (+)$ and $ \tau(-)$. This contradicts Lemma \ref{lem-cutsurfacegf}.}

\end{proof}

Lemma \ref{qcMl} establishes uniform quasiconvexity (see Remark \ref{rmk-dep} for dependence of constants) of the $\SSSS_i$'s in
$\til{N_l}$. We would like now to transfer this quasiconvexity to the universal covers
$(\til{M_l}, \dt)$. Recall that we have fixed a bi-Lipschitz homeomorphism between $N_l^0$ and $M_l^0$ after Theorem \ref{model-str}.
Recall also that in $(M_l,\dw)$, a splitting tube $T$ of  $N_l$ gets replaced by a standard annulus $A$. Further, (see Definition \ref{tubeel}) let $(M_l,\dt)$
 denote the tube-electrified metric obtained by electrifying the $\R-$direction of the universal cover $\til A = \R \times I$ of $A$.
Lemma \ref{weldandgeoltcommute} allows us to pass  between $N_l$ and the corresponding metric surface bundle $(M_l,\dw)$. 
Let $S_{c,i}$ be as in Lemma \ref{qcMl} and the discussion preceding it. {The bi-Lipschitz homeomorphism between $N_l^0$ and $M_l^0$ and Lemma \ref{weldandgeoltcommute} now allow us to transfer $S_{c,i}$  to $S_{c,i,M}$ contained in $M_l$.}
 Let $\til{S_{c,i,M}}$ be an elevation of $S_{c,i,M}$ to $\til{M_l}$. Let $\{ A_{ij}\}$ denote the collection of elevations of  standard annuli abutting  $\til{S_{c,i}}$.
Abusing notation slightly, let $$(\SSSS_i,\dw) = (\til{S_{c,i,M}}\bigcup \cup_j \til A_{ij},\dw), $$
and let 
$$(\SSSS_i,\dt) = (\til{S_{c,i,M}}\bigcup \cup_j \til A_{ij},\dt), $$
denote the corresponding tube-electrified metric.

We shall need a slightly modified version of \cite[Lemma 8.3]{mahan-split} that says that
partial electrification preserves quasiconvexity. Since this argument will appear again in the proof of Theorem \ref{maintech} in Section \ref{sec-qctrack}, we briefly discuss the transfer of information  from  relatively hyperbolicity  to tube-electrified spaces in our context by specializing the general discussion in \cite{mahan-pal}.  The spaces we shall be dealing with are as follows:
\begin{enumerate}
\item $\tmtdw$, which is $\delta-$hyperbolic relative to the collection
  $\tmr$ of elevations of risers to $\til{M_T}$ by Theorem \ref{mainprel}.
\item $\EE(\tmtdw, \tmr)$, which is the electric space obtained from $\tmtdw$ by electrifying the elements of $\tmr$. 
\item $\EE(\tmtdt, \tmr)$, which is the electric space obtained from $\tmtdt$ by electrifying the elements of $\tmr$. 
Note that $\EE(\tmtdw, \tmr)$ is quasi-isometric to $\EE(\tmtdt, \tmr)$.
\end{enumerate}

The following Lemma is now a special case of \cite[Lemmas 1.20, 1.21]{mahan-pal}:
\begin{lemma}\label{lem-alltrack} Given $\delta_1, D_1 > 0$, there exists 
	$\alpha_1 > 0$ such that the following holds. Suppose that $\EE(\tmtdw, \tmr)$ is $\delta_1-$hyperbolic,
	and that any two distinct elements of $\tmr$ are at distance at least
	$D_1$ from each other in $\tmtdw$.
Let $\gamma$ be a geodesic in {$\tmtdt$} joining $a, b$. Let $\gamma_e^1$ be an electric geodesic in $\EE(\tmtdw, \tmr)$ joining $a,b$.  Let $\gamma_e^2$ be an electric geodesic in $\EE(\tmtdt, \tmr)$ joining $a,b$.
Let $\gamma_h$ be a geodesic in  $\tmtdw$ joining $a,b$. Then $\gamma$, $\gamma_e^1$, $\gamma_e^2$ and $\gamma_h$ track each other away from $\tmr$ with  tracking constant $\alpha_1$.
\end{lemma}

Let $\gamma$ and $\gamma_e^1$ be as in Lemma \ref{lem-alltrack}.
We shall now modify $\gamma_e^1$ to a more canonical representative.
 Let $x_v, y_v$ be the entry and exit point of the electric geodesic $\gamma_e^1$ for an elevation $\til{v \times T_v}$ of a Margulis riser. Interpolating a geodesic segment $\eta_v$ in $\til{v \times T_v}$ between $x_v, y_v$, for every  $\til{v \times T_v}$ that $\gamma_e^1$ meets, we obtain a path 
$$\bbar{\gamma} = (\gamma_e^1 \setminus \bigcup_{\tmr} \til{v \times T_v}) \bigcup_v 
\eta_v.$$ where the first union ranges over all elevations of risers in $\tmr$ and the second union ranges over all $v$ such that $\gamma_e^1$ meets the Margulis riser $\til{v \times T_v}$. 

Since $\gamma_e^1$ and $\gamma_h$ track each other away from $\tmr$, we have the following consequence of Lemma \ref{lem-alltrack}:

\begin{cor}\label{cor-alltrack} Given $\delta_1, D_1 > 0$, there exists 
	$\alpha_1 > 0$ such that the following holds. Suppose that $\EE(\tmtdw, \tmr)$ is $\delta_1-$hyperbolic,
	and that any two distinct elements of $\tmr$ are at distance at least
	$D_1$ from each other in $\tmtdw$.
	Let  $\gamma_h$ be as in Lemma \ref{lem-alltrack} and $\bbar{\gamma}$ be as in the discussion preceding the Corollary. Then  $\bbar{\gamma}$ and $\gamma_h$ track each other away from $\tmr$  with  tracking constant $\alpha_1$.
\end{cor}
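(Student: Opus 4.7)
The plan is to recognize that Corollary \ref{cor-alltrack} is essentially a cosmetic repackaging of Lemma \ref{lem-alltrack} under a substitution of representatives. The key observation is that $\bbar{\gamma}$ and $\gamma_e^1$ differ only inside elements of $\tmr$: by construction, on every Margulis riser $\til{v\times T_v}$ that $\gamma_e^1$ enters, we delete the electric crossing of $\gamma_e^1$ and splice in the geodesic segment $\eta_v\subset \til{v\times T_v}$ between the entry and exit points. Outside $\bigcup_v \til{v\times T_v}$, the paths $\bbar{\gamma}$ and $\gamma_e^1$ coincide as subsets of $\til{M_T}$.

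With that observation in hand, the proof should proceed as follows. First I would fix the tracking constant $\alpha_1$ from Lemma \ref{lem-alltrack} and, if necessary, enlarge it to absorb any constants that arise from the construction of $\bbar{\gamma}$ near the boundaries of the risers (for example to ensure that the entry/exit points $x_v,y_v$ themselves lie within $\alpha_1$ of a common riser). Second, I would take any point $p\in\bbar{\gamma}$ that lies outside the $\alpha_1$-neighborhood of every element of $\tmr$. Because each inserted segment $\eta_v$ is contained in $\til{v\times T_v}$, such a point $p$ must belong to $\bbar{\gamma}\setminus\bigcup_v \til{v\times T_v} = \gamma_e^1\setminus\bigcup_v \til{v\times T_v}$, hence to $\gamma_e^1$. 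Lemma \ref{lem-alltrack} then supplies a point of $\gamma_h$ within $\alpha_1$ of $p$. Third, I would argue in the reverse direction: any point $q\in\gamma_h$ that lies outside the $\alpha_1$-neighborhood of $\tmr$ is, by Lemma \ref{lem-alltrack}, within $\alpha_1$ of a point of $\gamma_e^1$, and any point of $\gamma_e^1$ that far from $\tmr$ agrees with a point of $\bbar{\gamma}$. This gives two-sided tracking away from $\tmr$.

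I do not anticipate any real obstacle here. The only delicate point is bookkeeping the constant: the segment $\eta_v$ is entirely contained in $\til{v\times T_v}$, so the ``away from $\tmr$'' conclusion for $\bbar{\gamma}$ automatically restricts us to the portion of $\gamma_e^1$ where its tracking with $\gamma_h$ has already been verified. Thus the final constant $\alpha_1$ of the corollary can be taken to be the same as (or at worst a harmless enlargement of) the constant from Lemma \ref{lem-alltrack}, and the proof is essentially a one-line deduction once the set-theoretic identification $\bbar{\gamma}\setminus\tmr = \gamma_e^1\setminus\tmr$ is noted.
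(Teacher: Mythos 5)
Your proposal is correct and matches the paper's treatment: the paper offers no separate argument for Corollary \ref{cor-alltrack} beyond the observation that $\bbar{\gamma}$ and $\gamma_e^1$ coincide outside the risers (the inserted segments $\eta_v$ lie entirely in $\til{v\times T_v}\in\tmr$), so the tracking of $\gamma_e^1$ with $\gamma_h$ from Lemma \ref{lem-alltrack} transfers verbatim away from $\tmr$. Your careful bookkeeping of the constant is a harmless elaboration of the same one-line deduction.
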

 Lemma \ref{lem-alltrack} and Corollary \ref{cor-alltrack} above recast the ``Bounded Coset Penetration'' property of relative hyperbolicity \cite{farb-relhyp} in our context.

\begin{lemma}\label{tubelpresqc} Given $R >0$ there exists $\delta \geq 0$, $C>0$ such that the following holds.
	Let $N$ be a doubly degenerate manifold of special split geometry corresponding to an $L-$tight $R-$thick tree with underlying space $\reals$. 
	Let $(M,\dw)$ (resp.\ $(M,\dt)$) denote the corresponding metric surface bundle with the welded (resp.\ tube-electrified) metric. Let $\tmr$ denote the collection of elevations of Margulis risers. Then 
	\begin{enumerate}
	\item $(\til{M},\dt)$ is $\delta-$hyperbolic and $(\til{M},\dw)$ is strongly $\delta-$hyperbolic relative to the collection $\tmr$.
	\item $(\SSSS_i,\dt)$ is $C-$qi-embedded in  $(\til{M},\dt)$.
	\end{enumerate}
\end{lemma}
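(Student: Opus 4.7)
Part (1) is immediate from Theorem \ref{mainprel}. Indeed, $l$ is an $L$-tight $R$-thick tree whose underlying space is $\R$, so vertex valence is trivially at most $2$, and the theorem yields a uniform constant $\delta_0 = \delta_0(R)$ such that $(\til M, \dt)$ is $\delta_0$-hyperbolic and $(\til M, \dw)$ is strongly $\delta_0$-hyperbolic relative to $\tmr$; set $\delta := \delta_0$.

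For Part (2), the plan is to transport the uniform quasiconvexity of $\SSSS_i$ in $\til N_l$ supplied by Lemma \ref{qcMl} across the bi-Lipschitz identification of Theorem \ref{model-str}(2), and then to upgrade it to a $\dt$-qi-embedding by invoking the bounded-coset-penetration machinery encoded in Lemma \ref{lem-alltrack} and Corollary \ref{cor-alltrack}. Lemma \ref{qcMl} gives $C_0 = C_0(R)$ so that $\SSSS_i$ is $C_0$-quasiconvex in $\til N_l$. Since the bi-Lipschitz homeomorphism $\til N_l^0 \cong (\til M^0, \dw)$ of Theorem \ref{model-str}(2) extends to path-metric completions, this transfers to the following relative-quasiconvexity statement in $(\til M, \dw)$: any $\dw$-geodesic between two points of $\SSSS_i$ stays within a uniform $\dw$-neighborhood of $\SSSS_i$ outside a uniform neighborhood of $\tmr$. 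The decisive structural feature, built into the construction of $\SSSS_i$, is that $\SSSS_i$ \emph{absorbs} the risers it abuts: every elevation of a riser abutting $\til{S_{c,i}}$ is contained entirely in $\SSSS_i$.

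Now fix $a, b \in \SSSS_i$ and let $\gamma$ be a $\dt$-geodesic in $\til M$ between them, $\gamma_h$ a $\dw$-geodesic, and $\gamma_e^1, \gamma_e^2$ the electric geodesics in $\EE(\til M, \dw; \tmr)$ and $\EE(\til M, \dt; \tmr)$ respectively. Corollary \ref{cor-alltrack} provides a universal $\alpha_1$ such that $\bbar\gamma$, $\gamma_h$, and hence $\gamma$, all track each other outside $\tmr$. Combined with the absorbing quasiconvexity above, this forces $\gamma_h$ to penetrate only absorbed risers up to uniformly bounded excursions at either end. Outside $\tmr$, $\gamma$ then remains $\dw$-close, hence $\dt$-close (since $\dt \leq \dw$), to $\SSSS_i$; inside an absorbed riser, $\gamma$ is already in $\SSSS_i$. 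Replacing each arc of $\gamma$ outside $\tmr$ by its nearest-point projection into $\SSSS_i$ and retaining the arcs inside absorbed risers produces a path in $(\SSSS_i, \dt)$ from $a$ to $b$ whose $\dt$-length is bounded by $C \cdot \dt(a, b) + C$ for some $C = C(R)$, which is the desired qi-embedding.

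The main technical obstacle is justifying the claim that $\gamma_h$ penetrates only absorbed risers up to bounded excursions; a priori, a $\dw$-geodesic between two points of $\SSSS_i$ could dip into a non-absorbed riser. This is the bounded-coset-penetration content of the formalism of \cite{mahan-pal} applied to our setting: relative hyperbolicity of $(\til M, \dw)$ (from Part (1)) together with the absorbing relative-quasiconvexity of $\SSSS_i$ in the electrified space $\EE(\til M, \dw; \tmr)$ forces any $\dw$-geodesic between points of $\SSSS_i$ to agree, up to universally bounded Hausdorff distance outside $\tmr$, with a path that only enters absorbed peripherals. Lemma \ref{lem-alltrack} and Corollary \ref{cor-alltrack} then extend this control to the electric geodesic $\gamma_e^2$ in $\EE(\til M, \dt; \tmr)$ and, via the canonical reparametrization $\bbar\gamma$, to the $\dt$-geodesic $\gamma$ itself, with all constants depending only on $R$.
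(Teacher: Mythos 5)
Your proposal is correct and follows essentially the same route as the paper: part (1) is quoted from Theorem \ref{mainprel}, and part (2) combines Lemma \ref{qcMl}, the bi-Lipschitz identification of Theorem \ref{model-str}(2), and the tracking statements of Lemma \ref{lem-alltrack} and Corollary \ref{cor-alltrack}, using the fact that $\SSSS_i$ contains the abutting risers. The only cosmetic difference is that the paper first records strong relative hyperbolicity of $\til N$ with respect to the splitting tubes (via their uniform $\ep$-separation from Proposition \ref{prop-splsplit}) before transferring to $(\til M,\dt)$, and defers the remaining details to the proof of \cite[Lemma 8.3]{mahan-split}, whereas you spell out the bounded-coset-penetration step directly.
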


\begin{proof}
	The first conclusion follows from Theorem \ref{mainprel}.
	The proof of the second conclusion
	follows that of \cite[Lemma 8.3]{mahan-split} and we  indicate the slight modification to the setup we need.
	Let $\TT$ denote the collection of splitting tubes in $N$. 
	Uniform separation of splitting tubes (see Proposition \ref{prop-splsplit}, where the separation constant $d_0$ is shown to depends only on $R$) shows that $\til N$ is strongly hyperbolic relative to the collection $\til \TT$ of elevations of $\T \in \TT$, with constant of relative hyperbolicity depending only on $d_0$ and the constant $D$	
 appearing in Proposition \ref{prop-splsplit} defining the relationship between Margulis tubes and splitting tubes (note that $D$ depends only on $R$).
 
	Let $N_0$ denote $N\setminus \bigcup_{\T \in \TT} \T^0$, where $\T^0$ denotes the interior of $\T$. Let $\til N_0$ denote the elevation of $N_0$ to $\til N$ and let $\partial \til \TT$ denote the collection of elevations of $\partial \T$ for $\T \in \TT$. Equipping $N_0$ with the induced path metric $d_p$, it follows  that $\til N_0$
	is strongly hyperbolic relative to the collection $\partial \til \TT$ (with the same constant as the strong relative hyperbolicity constant of $\til N$ relative to $\til \TT$). Since $(\til M, \dt)$ can be obtained by tube electrification of $(\til N_0, d_p)$,   Lemma \ref{lem-alltrack} and Corollary \ref{cor-alltrack} show that
	geodesics in $\tmtdw$ and $\tmtdt$ track each other away from the elements of
	$\tmr$.
	
	Next observe that $(\SSSS_i,\dt)$ in  $(\til{M},\dt)$ is obtained by the tube-electrification procedure applied to  $\SSSS_i$ in $\til N$. By Lemma \ref{qcMl}, there exists $C_0$ depending on $R$ alone such that $\SSSS_i$  is $C_0-$qi-embedded in $\til N$. Identifying 
	$\til N_0$ and $\til M_0$ via the bi-Lipschitz homeomorphism of Theorem \ref{model-str}, the previous
	paragraph now shows that geodesics in $\til N$ and those in 
	$(\til M, \dt)$ track each other away from the elements of $\til \TT$ and
	$\tmr$.
	 Hence, under tube-electrification,
	  quasigeodesics in $\SSSS_i \subset \til N$ go to
	 quasigeodesics in $(\SSSS_i,\dt)$ proving the second assertion. 
	 {(This last piece of the argument is as in the proof of  \cite[Lemma 8.3]{mahan-split}, where the quasiconvex set $A$ takes the place of $\SSSS_i$, and a partially electrified metric takes the place of $\dt$.) }
\end{proof}

\begin{prop}\label{qcMlel}  Given $R>0$, there exists $C \geq 0$ such that the following holds.\\
	Let $l$ be an $L-$tight $R-$thick tree in $\ccs$ whose underlying topological space is homeomorphic to $\R$ and let $(M_l,\dw)$ be the corresponding metric surface bundle.
	Let $\tmldt$ denote the corresponding tube-electrified metric. Let  $S_{c,i}$ be as above. Then 
	any elevation $\til{S_{c,i}}$ is $C-$quasiconvex in $\tmldt$.
\end{prop}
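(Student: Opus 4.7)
The plan is to deduce Proposition \ref{qcMlel} directly from Lemma \ref{tubelpresqc}(2), which already supplies a constant $C_0=C_0(R)$ so that $(\SSSS_i,\dt)$ is $C_0$-qi-embedded in $\tmldt$, together with the $\delta_0(R)$-hyperbolicity of $\tmldt$ from Theorem \ref{mainprel}(1). What remains is to strip off the annular ``flanges'' of $\SSSS_i$ and promote the qi-embedding to the smaller set $\til{S_{c,i}}$.

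First I would record the attachment pattern of $\SSSS_i$. Each elevation $\til A_{ij}$ lifts a standard annulus $A=\RR_v$ and so is homeomorphic to $\R\times T_v$; one end $\R\times\{0\}$ coincides with a boundary curve of $\til{S_{c,i}}$ (this is exactly the abutting condition), while the other end is attached in $\til M_l$ to a distinct elevation that is \emph{not} part of $\SSSS_i$. Consequently any rectifiable path in $\SSSS_i$ whose endpoints lie in $\til{S_{c,i}}$ must enter and exit each $\til A_{ij}$ through the same circle $\R\times\{0\}$. By Definition \ref{tubeel}, the $\dt$ pseudo-metric on $\RR_v$ is the product of the zero pseudo-metric on the $S^1$-factor with $d_{T_v}$ on $T_v$, so any two points of $\R\times\{0\}$ have zero $\dt$-distance. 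Hence each excursion into a flange can be replaced by a detour along this electrified boundary circle lying inside $\til{S_{c,i}}$, at zero $\dt$-cost. Carrying out this retraction at every excursion yields, for every $x,y\in\til{S_{c,i}}$, a path in $\til{S_{c,i}}$ of no greater $\dt$-length than the original, so the two induced path pseudo-metrics on $\til{S_{c,i}}$ (from $\SSSS_i$ and from $\til{S_{c,i}}$ itself) agree.

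To conclude, given $x,y\in\til{S_{c,i}}$ I would pick a quasi-geodesic joining them inside $\til{S_{c,i}}$ with its induced $\dt$ pseudo-metric; by the previous step together with Lemma \ref{tubelpresqc}(2), this is automatically a uniform quasi-geodesic in $\tmldt$. The Morse stability property for quasi-geodesics in the $\delta_0$-hyperbolic space $\tmldt$ then produces $C=C(R)$ such that every $\dt$-geodesic between $x$ and $y$ in $\tmldt$ is contained in the $C$-neighborhood of $\til{S_{c,i}}$, which is the asserted quasiconvexity. The main obstacle is the retraction step: it is valid precisely because $\SSSS_i$ was defined to include only $\til{S_{c,i}}$ at the far end of each flange, so every excursion is forced to turn around at the electrified boundary where the retraction is free. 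If the opposite end of a flange also belonged to $\SSSS_i$, an excursion could shortcut across the flange at cost of order $h_v$, which is not uniformly bounded as $L\to\infty$, and the argument would collapse.
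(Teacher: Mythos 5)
Your argument is correct and is essentially the paper's own proof: the paper likewise observes that each flange $(\til A_{ij},\dt)$ collapses (being uniformly quasi-isometric to the interval $[0,l_i]$, with $\til{S_{c,i}}$ a uniform Lipschitz retract of $\SSSS_i$ via projection of each $\R\times[0,l_i]$ onto $\R\times\{0\}$), and then invokes the second conclusion of Lemma \ref{tubelpresqc} to pass from quasiconvexity in $(\SSSS_i,\dt)$ to quasiconvexity in $\tmldt$. Your zero-cost retraction of excursions and the concluding Morse-stability step are the same mechanism, phrased slightly more explicitly.
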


\begin{proof} During the course of the proof of this proposition, `uniform' will mean `depending only on $R$'.
Observe that in the (pseudo)metric space $\tmldt$, each elevation $(A_{ij},\dt)$ of a standard annulus  is uniformly quasi-isometric to the interval $[0,l_i]$, where $l_i$ is its height.  Since each  $\{ A_{ij}\}$ is uniformly quasi-isometric to an interval, $(\til{S_{c,i}},\dt)$ is uniformly quasiconvex in $(\SSSS_i,\dt)$ (in fact, it is a uniform Lipschitz retract obtained by projecting each  $\til A_{ij} = \R \times [0,l_i]$ onto $\R \times \{0\}$). Hence by the second conclusion of Lemma 
\ref{tubelpresqc},  $(\til{S_{c,i}},\dt)$ is uniformly quasiconvex in  $\tmldt$.
\end{proof}

\subsection{Uniform quasiconvexity of treads in $\tmtdt$: Proof of Theorem \ref{treadsunifqc}} We now turn our attention to $P: \tmtdt \to \but$ with $T$ an $L-$tight $R-$thick tree.

We restate the main theorem of this section for convenience.
\treadsunifqc*

\begin{proof}
	Proposition \ref{qcMlel} shows that there exists $C_0$ such that $\ttrvw$ is $C_0-$quasiconvex in $\tmldt $ for any bi-infinite geodesic $l$ in $\but$  passing through the midpoint vertex $vw$.
	
	The converse direction of Proposition \ref{effectiveqc} now shows that there exists $K$ such that  $\ttrvw$ flares in all directions with parameter $K$. The forward direction of Proposition \ref{effectiveqc} finally shows that there exists $C$ such that  $\ttrvw$ is $C-$quasiconvex in $\tmtdt$.
\end{proof}

\section{Quasiconvexity of the track: Proof of Theorem \ref{maintech}}\label{sec-qctrack}
We are now in a position to prove the main technical Theorem of this paper, Theorem \ref{maintech}, which we restate for convenience:
\maintech*

\begin{proof}
The first conclusion of the theorem follows from 
Theorem \ref{mainprel}. 

We turn now to the second assertion.
Note that any elevation $\tttt$ of 
the  track $\ttt$ to $\tmtdt$ consists of
\begin{itemize}
	\item elevations $\ttrvw$ of treads that are  (uniformly) $C_0-$quasiconvex in $\tmtdt$ by 
	Theorem \ref{treadsunifqc} ($C_0$ depends only on $R$ and $V_0$),
	\item attached to one another via elevations  of  Margulis risers $(\til{v \times T_v}, \dt)$. Note that these are  uniformly
	(depending only on  $V_0$) {quasi-isometrically embedded  copies} of the tree-link $T_v$ since the {circle direction} corresponding to $v$ (and hence its universal cover $\R$) is electrified in the $\dt$ metric. {In particular, the  elevations  of  Margulis risers are uniformly
		(depending only on  $V_0$) quasiconvex.}
\end{itemize}

Note also that by the hypothesis of $L-$tightness, the distance between any two terminal vertices of $T_v$ (for any $v$) is at least $L$.
Thus, we have a collection of  $C_0-$quasiconvex treads meeting elements of $\tmr$ at  large distances (at least $L$, up to an additive constant depending on $V_0$) from each other.
Thus any geodesic $\gamma$ in $\tttt$ is built up of alternating segments of the following types:
\begin{itemize}
	\item geodesics in (the intrinsic metric on) elevations $\ttrvw$   of  treads $\trvw$,
	\item geodesics in Margulis risers $(\til{v \times T_v}, \dt)$ of length at least $L$ (up to an additive constant depending on  $V_0$).
\end{itemize}

Recall
the local-global principle for quasigeodesics in hyperbolic spaces (see {\cite[p. 405]{brid-h} for instance, where local \emph{geodesics} are shown to be global quasigeodesics}): Given $\delta>0$ and $C' \geq 1$, there exists $\Upsilon > 0$ and $C \geq 1$ such that if a parametrized path $\beta$ in a $\delta-$hyperbolic metric space satisfies the property that each subpath of $\beta$ of length $\Upsilon$ is a $C'-$quasigeodesic, then $\beta$ is a $C-$quasigeodesic. {A closely related fact that may be deduced
	from the local-global principle gives sufficient conditions for a broken geodesic to be a global quasigeodesic \cite[Lemma 9.3]{minasyan-thesis}. It says the following: Let $\{x_0, \cdots, x_n\}$ be a sequence of points in
	a $\delta-$hyperbolic $( X,d)$ such that $d(x_{i-1},x_i) \geq C_1$,
$\forall i = 1,\cdots,n$, and $(x_{i-1}, x_{i+1})_{x_i} \leq C_0$ $\forall i = 1,\cdots, n-1$, where $C_0 \geq 14 \delta$, and
	$C_1 > 12(C_0 + \delta)$. Then the
	broken geodesic given by the concatenation $\bigcup_i [x_{i-1},x_i]$ is
	a $C_2(=C_2(C_0, \delta))-$ quasigeodesic  contained in the  $2C_0-$neighborhood  of the
	geodesic segment $[x_{0},x_n]$.}

By Theorem \ref{mainprel} or the first statement of the theorem, 
$\tmtdt$ is $\delta-$hyperbolic. {By Theorem \ref{treadsunifqc}, treads are uniformly quasiconvex in $\tmtdt$.
There exist $K, \ep$ such that for any  tread $\tr_{vw}$, and any abutting riser ${v \times T_v}$, the latter is a $(K, \ep)-$quasi-isometric section from some interval in $\but$
to  $\tmtdt$. Thus, any elevation  $(\til{v \times T_v}, \dt)$ is    $(K, \ep)-$quasi-isometrically embedded in $\tmtdt$. Let $\Pi: \tmtdt \to \but$ represent the projection as before. Then, for any $x \in \til{\tr_{vw}}$ and
$y\in \til{v \times T_v}$  the distance $\dt (x, y)$ is 
comparable to $d_\but (\Pi(x), \Pi(y))$. Hence,
for any triple $x,y, z$ with $x \in \til{\tr_{vw}}$, $y\in \til{v \times T_v}$, 
and $z \in \til{\tr_{vw}} \cap  \til{v \times T_v}$,   the Gromov inner product $(x,y)_z$ is uniformly bounded above.
Hence, (this is slightly stronger than Lemma \ref{tubelpresqc})  any tread along with abutting risers is also $C-$qi-embedded in $\tmtdt$ as is any riser.}
Hence there exists $C'$ depending on $L$ such that  any subpath of $ \gamma$ of length $\Upsilon$ is  contained in 
\begin{enumerate}
\item a tread along with abutting risers, or
\item  a riser.
\end{enumerate}
Hence, by the local-global principle for quasigeodesics
 $\gamma$ is a $C-$quasigeodesic provided $L$ is sufficiently large. 
 This proves the second conclusion of the theorem. \\

We now prove the  third assertion of Theorem \ref{maintech}. Fix an elevation $\tttt$ of $\ttt$. We denote the elevations $\ttrvw$ contained in 
$\tttt$ by $\til{Tr_i}$ where $i$ ranges over some countable set. 
Since any two elevations $\til{Tr_1}, \til{Tr_2}$ lying over distinct mid-point vertices of $\but$ are separated by at least $L$, and since each $\til{Tr_i}$ is simply connected, any closed essential loop $\sigma$ in  $\tttt$ must  have at least one geodesic segment in the elevation  $(\til{v \times T_v}, \dt)$ of  a riser. 
We can thus put $\sigma$ in standard form so that it is a union of geodesic segments in ${Tr_i}$  and geodesic segments in  elevations  of   risers. 
But then the local-global principle for quasigeodesics in hyperbolic spaces  again shows that $\sigma$ is a $C-$quasigeodesic for $L$   sufficiently large; in particular, if $L \gg C$, it cannot begin and end at the same point.
This contradiction proves the third conclusion of the theorem.\\

To prove the last assertion, assume now that there is a uniform upper bound $L_1$ on the diameters of tree-links $T_v$. Observe first that $\tmtdw$ is $\delta-$hyperbolic relative to the  elements of $\tmr$ by
Theorem \ref{mainprel}. The elements of $\tmr$ are now uniformly hyperbolic, since the upper bound $ L_1$  furnishes uniform quasi-isometries of $\til{v\times T_v}$ with $\R$. Hence, 
$\tmtdw$ is hyperbolic by \cite{bowditch-relhyp} (see also \cite[Proposition 2.9]{mahan-relrig}  or \cite[Proposition 6.1]{mahan-dahmani}).

For $a, b \in \tttt$, let $\gamma_h$ be a geodesic in $\tmtdw$ joining $a, b$.  Also, let $\bbar{\gamma}$ be constructed from the geodesic in $\tmtdt$ joining $a, b$ as in the discussion preceding Corollary \ref{cor-alltrack}. From the second assertion of this theorem, it follows that $\tttt$ is $C-$qi-embedded in $\tmtdt$. Hence $\bbar{\gamma}$  lies in a bounded neighborhood of $\tttt$ in $\tmtdt$.
By  Lemma \ref{lem-alltrack} and Corollary \ref{cor-alltrack},
geodesics in $\tmtdw$ and geodesics in $\tmtdt$ track each other away from
elements of $\tmr$; also, $\bbar{\gamma}$ and $\gamma_h$ 
track each other away from
elements of $\tmr$. Using strong relative hyperbolicity of $\tmtdw$ relative to the collection $\tmr$, it follows that
 $\gamma_h$ lies in a bounded neighborhood of $\tttt$ in $\tmtdw$.
This  proves the fourth assertion of the theorem and completes the proof
of Theorem \ref{maintech}. 
\end{proof}

\begin{rmk}
	A part of the fourth assertion of Theorem \ref{maintech}, viz.\ quasiconvexity of $\tttt$ in $\tmtdw$ actually holds (as the proof above shows) without the extra hypothesis of the existence of $L_1$. However, in the absence of such an upper bound $L_1$,  $\tmtdw$ is no longer hyperbolic, but only strongly hyperbolic relative to the collection $\tmr$.
\end{rmk}

\section{Generalization: separating curves}\label{sec-genlzns}

The purpose of this section is to summarize some notions from \cite{mahan-hyp} and extend the discussion in  Section \ref{sec-tt} to allow the possibility of 
multicurves,
as well as
separating curves. {The setup in this section is somewhat more general, and allows for the construction of new examples not covered by  Section \ref{sec-tt}. However, dealing with this level of generality at the outset would have cluttered the exposition considerably. We have therefore opted to only indicate the
modifications necessary in proving Theorem \ref{maintech2} below.}

\subsection{Tight trees}
The collection of complete graphs {with one or more vertices} in the curve graph $\CC(S)$ will be denoted as $\ccd(S)$. (Equivalently, $\ccd(S)$ is the {union of 0-cells of the curve complex along with} the collection of 1-skeleta of simplices in the curve complex. However, since we have only used the curve graph and not the curve complex in this paper we opt for the earlier point of view.) 
Let $\gamma =\{\cdots, v_{-1}, v_0, v_1, \cdots\}$ be a 
geodesic (finite, semi-infinite, or bi-infinite)  in a tree $T$ and  $i: V(T) \to \ccd (S)$ a map.  A path in $\ccs$ {\bf induced by $\gamma$} is a choice of  simple closed curves $\sigma_i \in i(v_i)$.
The map $i$  will be called an \defstyle{isometric embedding} if any path induced in $\ccs$ by a geodesic $\gamma$ in $T$ is a geodesic in $\ccs$.
We now generalize the notion of tight trees of non-separating curves (Definition \ref{def-tighttree}) to multicurves.

\begin{defn}\label{def-tighttree-sep}\cite{mahan-hyp}
	An \defstyle{$L-$tight tree} in the curve graph $\CC(S)$ consists of a (not necessarily regular) simplicial tree $T$ of bounded valence and a  map $i: V(T) \to \ccd (S)$ such that 
	\begin{enumerate}
		\item 
		for every vertex $v$ of $T$,
		$S\setminus i(v)$ consists of exactly one or two components. Further, if $S\setminus i(v)$ consists of  two components and $i(v)$ contains more than one simple closed curve, then each component of $i(v)$ is individually non-separating. If $S\setminus i(v)$ consists of  two components, $v$ is called a \defstyle{separating vertex} of $T$.
		\item for every pair of adjacent vertices $u \neq v$   in $T$, and any vertices $u_0, v_0$ of the simplices $i(u), i(v)$ respectively,
		\[d_{\ccs} (u_0, v_0) =1.\]
		\item There is a \defstyle{distinguished component} $Y_v$ of $S\setminus i(v)$ such that for any vertex $u$ adjacent to $v$ in $T$, $i(u) \subset Y_v$ (automatic if $i(v)$ is non-separating).
		For $i(v)$ separating, we shall refer to $Y_v':=S \setminus Y_v$ as the \defstyle{secondary} component for $v$.  
		\item  for every pair of distinct vertices $u \neq w$ adjacent to $v$ in $T$, and any vertices $u_0, w_0$ of the simplices $i(u), i(w)$ respectively,
		\[d_{\CC(Y_v)} (u_0, w_0) \geq L.\]
	\end{enumerate}	
	An $L-$tight tree for some $L\geq 3$ will simply be called a tight tree.
\end{defn}

We recall the following from \cite{mahan-hyp} due to Bromberg.
\begin{prop}\label{isometrictighttree-sep} There exists $L > 1 $ such that
	if $S$ is a closed surface of genus at least $2$, and  $i: V(T)\to \ccd(S)$ defines an $L$--tight tree,  then $i$ is an isometric embedding.
\end{prop}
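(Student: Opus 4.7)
The plan is to prove this by induction on $n = d_T(v,w)$, establishing that for any $T$-geodesic $v_0, v_1, \ldots, v_n$ and any induced path $\sigma_0, \ldots, \sigma_n$ in $\ccs$ (with $\sigma_j$ a vertex of $i(v_j)$), one has $d_\ccs(\sigma_0, \sigma_n) = n$. The main tool is the Bounded Geodesic Image Theorem (BGIT) of Masur--Minsky, uniform in the topological type of $S$ after Hensel--Przytycki--Webb: there is a universal constant $M$ such that any $\ccs$-geodesic $\gamma$ whose vertices all have subsurface projection defined onto a proper essential subsurface $Y \subsetneq S$ has $\mathrm{diam}_{\CC(Y)}(\pi_Y(\gamma)) \leq M$. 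I would take the tightness constant $L$ much larger than $M$.

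The base cases $n \leq 1$ follow immediately from condition (2). For the inductive step, suppose for contradiction that some $\ccs$-geodesic $\gamma$ from $\sigma_0$ to $\sigma_n$ has length strictly less than $n$. For each intermediate $v_k$, $1 \leq k \leq n-1$, I would apply BGIT with $Y = Y_{v_k}$. Condition (4) gives $d_{\CC(Y_{v_k})}(i(v_{k-1}), i(v_{k+1})) \geq L$; by applying BGIT to the shorter induced paths $\sigma_0, \ldots, \sigma_{k-1}$ and $\sigma_{k+1}, \ldots, \sigma_n$---each a $\ccs$-geodesic by the induction hypothesis---one sees $\pi_{Y_{v_k}}(\sigma_0)$ and $\pi_{Y_{v_k}}(\sigma_n)$ are uniformly close to $i(v_{k-1})$ and $i(v_{k+1})$ respectively in $\CC(Y_{v_k})$. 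The triangle inequality and $L \gg M$ force $d_{\CC(Y_{v_k})}(\pi(\sigma_0), \pi(\sigma_n)) > M$, so by BGIT $\gamma$ must contain a vertex $c_k$ within $\ccs$-distance $1$ of some component of $i(v_k)$.

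From the existence of near-pass vertices $c_k$ one extracts the contradiction. In the favorable case $c_k \in i(v_k)$, applying the induction hypothesis to both sub-geodesics $\sigma_0 \to c_k$ and $c_k \to \sigma_n$ yields $|\gamma| \geq k + (n-k) = n$, contradicting $|\gamma| < n$. When $c_k$ is only adjacent in $\ccs$ to some $c_k' \in i(v_k)$, one reroutes through $c_k'$ and, after accounting for the one-step detour, either derives $|\gamma| \geq n$ directly or iterates the argument on a nearby intermediate vertex to locate an actual component of $i(v_{k\pm 1})$ along $\gamma$; the $L$-tightness ensures these reroutings cannot accumulate to defeat the count.

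The main obstacle I anticipate is the separating and multicurve cases. When $i(v_k)$ is separating, the complement contains a secondary subsurface $Y'_{v_k}$ about which condition (4) provides no control, and one must ensure that the vertices of the induced sub-paths have projection to $\CC(Y_{v_k})$ well-defined, and in particular that the induced path does not ``escape'' into $Y'_{v_k}$. The essential ingredient is condition (3), forcing $i(u) \subset Y_{v_k}$ for every $u$ adjacent to $v_k$; propagating this ``stays on the distinguished side'' property along the entire tree-path, rather than merely between adjacent tree-vertices, is the combinatorial crux and requires inductively tracking the nested distinguished subsurfaces together with the tightness bound of condition (4). The multicurve hypothesis in condition (1)---that components of a separating $i(v)$ be individually non-separating---is exactly what makes this inductive tracking coherent, since it prevents pathological subsurface projections caused by components of $i(v)$ that are themselves boundary-parallel in a larger piece.
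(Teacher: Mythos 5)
The paper does not contain a proof of this proposition: it is imported from the companion paper \cite{mahan-hyp}, with the argument attributed to Bromberg, and the only information given here is that $L\geq \max(2M,4D)$ suffices, where $M$ is the Bounded Geodesic Image constant and $D$ is the \emph{Behrstock} constant. That remark already signals that the actual proof uses the Behrstock inequality, which your argument never invokes. The first half of your proposal is correct and standard: by induction the sub-paths $\sigma_0,\dots,\sigma_{k-1}$ and $\sigma_{k+1},\dots,\sigma_n$ are geodesics all of whose vertices project to $\CC(Y_{v_k})$, so BGIT plus condition (4) forces $d_{\CC(Y_{v_k})}(\pi(\sigma_0),\pi(\sigma_n))>M$, and hence any geodesic $\gamma$ from $\sigma_0$ to $\sigma_n$ contains a vertex $c_k$ with no essential intersection with $Y_{v_k}$, i.e.\ $c_k$ is either a component of $i(v_k)$ or a curve in the secondary component $Y_{v_k}'$.

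The genuine gap is the final counting step. When $c_k\in i(v_k)$ the induction closes exactly as you say (and in the non-separating setting of Proposition \ref{isometrictighttree} this is the only case, since a curve missing $S\setminus i(v_k)$ must be $i(v_k)$ itself). But when $i(v_k)$ separates, the case $c_k\subset Y_{v_k}'$ is not exceptional, and your rerouting only gives $d(\sigma_0,c_k)\geq k-1$ and $d(c_k,\sigma_n)\geq n-k-1$, hence $|\gamma|\geq n-2$ --- an additive loss that does not contradict $|\gamma|=n-1$. ``Iterating on a nearby intermediate vertex'' does not repair this: the same dichotomy recurs at $v_{k\pm1}$, and nothing in your write-up prevents every $c_k$ from landing in the secondary component. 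Worse, a curve $c_k\subset Y_{v_k}'$ is disjoint from $i(v_{k\pm1})\subset Y_{v_k}$, so such near-misses can sit very close to several consecutive $i(v_j)$'s simultaneously; the assertion that ``$L$-tightness ensures these reroutings cannot accumulate'' is exactly the claim that needs proof. This is where the cited argument brings in the Behrstock inequality (to order the subsurfaces $Y_{v_k}$ with large projections along $\gamma$ and make the footprints genuinely disjoint), and without that --- or some equivalent sharp separation argument --- your induction establishes only a quasi-isometric embedding, not the isometric embedding claimed.
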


In fact, as shown in \cite{mahan-hyp}, $L\geq \max{(2M, 4D)}$ suffices, where $M$ is a constant given by the Bounded Geodesic Image Theorem
\cite{masur-minsky2} and $D$ is the Behrstock constant
{(see \cite{behrstock} and  \cite[Proposition 2.12]{mahan-hyp})}. We shall not need this.

\subsection{Balanced trees} A special class of tight trees will be required to generalize Theorem \ref{maintech}. For $i: V(T) \to \ccd(S)$ a tight-tree, tree-links $T_v$ are defined as in Definition \ref{def-treelink} with the qualifier  that for $v$ a separating vertex, the weak  hull $CH(\ilkv)$ is constructed in the curve graph $\CC(Y_v)$ of the distinguished component $Y_v$ of $S\setminus i(v)$. The ``balanced'' condition we shall introduce now essentially guarantees that for $v$ a separating vertex, $T_v$ serves as the tree-link of the secondary component $Y_v'$ as well. For $v$ a separating vertex, the tree link $T_v$  furnishes, a priori, a way of constructing a model geometry for $Y_v \times T_v$. To extend this to a model geometry for $S \times T_v$, we need the balanced condition below.

For $w$ adjacent to $v$ let $T_w'$ denote the connected component of $T\setminus \{v\}$ containing $w$. Let $\Pi'_v (T_w')$ denote the subsurface projection of $i(V(T_w'))$ onto $\CC(Y_v')$.

\begin{defn}\label{def-balancedtree}\cite[Definition 2.18]{mahan-hyp}
	A tight tree  $i: T \to \CC(S)$ is said to be a \defstyle{balanced} tree with parameters $D,k$ if 
	\begin{enumerate}
		\item For every separating vertex $v$ of $T$, and every adjacent vertex $w$, \[\d(\Pi'_v (T_w')) \leq D.\]
		\item For the secondary component $Y_v'$, let $\sec(v) \subset \CC(Y_v')$ denote the collection of  curves in $ \Pi'_v (T_w')$ as $w$ ranges over all vertices adjacent to $v$ in $T$. Let $CH(\sec(v))$ denote the weak  hull of $\sec(v) $ in $\CC(Y_v')$. We demand that
		there exists a  surjective $k-$quasi-isometry \[\P_v' : CH(\sec(v)) \to T_v\] to the tree-link $T_v$, such that for any vertex $w$ of $T$ adjacent to $v$, \[\P_v'( \Pi'_v (T_w')) = \P_v (w),\] (where $\P_v$ is the projection defined in Definition \ref{def-treelink}).
	\end{enumerate} 
	
\end{defn}

The notions of topological building block $M_v$ (Definition \ref{def-topbb}), blow-up $\but$, and topological model (Definition \ref{def-topmodeltree}) now go through exactly as in Section \ref{sec-topbb}.
Definition \ref{def-balancedtree} guarantees that the weak  hulls $CH(\ilkv) \subset \CC(Y_v)$ and $CH(\sec (v)) \subset \CC(Y_v')$ are coarsely quasi-isometric to each other and to the tree-link $T_v$. {
Finally, in the hypothesis of Theorem \ref{maintech2}, we shall impose the condition 
that that  if $v, u$ are adjacent in $T$, then there exists a subsurface of $S$ with boundary $i(v) \cup i(u)$. These subsurfaces can now be used to construct treads,
and hence a
 tree-stairstep as in  Definition \ref{def-tstairstep}. As before, we denote the
  tree-stairstep corresponding to such a balanced tree by $\TT_T$.}
With these constructions, the proof of Theorem \ref{maintech} goes through  and we have:

\begin{theorem}\label{maintech2}
	Given $R, D, k > 0$, $V_0 \in \natls$ there exists $\delta_0, L_0, C \geq 0$ such that the following holds. Let $i:T \to \ccd(S)$ be an $L-$tight  $R-$thick balanced tree  with parameters $D, k$ such that 
	\begin{enumerate}
	\item  {If $v, u$ are adjacent in $T$, there exists a subsurface of $S$ with boundary $i(v) \cup i(u)$. Let $\TT_T$ be the tree-stairstep constructed out of these subsurfaces (see the discussion preceding the statement of this theorem).}
	\item The valence of any vertex of $T$ is at most $V_0$.
	\end{enumerate} Then, for 
	$L\geq L_0$,
	\begin{enumerate}
		\item $(\til M_T,\dt)$ is $\delta_0-$hyperbolic.
			\item $\tttt$ is $C-$qi-embedded in $(\til{M_T},\dt)$.
		\item $\ttt$ is incompressible in $M_T$, i.e.\ $\pi_1(\ttt)$ injects into $\pi_1(M_T)$.
		\item If in addition there exists $L_1$ such that for every vertex $v$ of $T$ and for every pair of distinct vertices $u\neq w$ adjacent to $v$ in $T$, 
		\[d_{\CC(S \setminus i(v))} (i(u), i(w)) \leq L_1, \] then $\tttt$ is quasiconvex in $(\til{M_T},\dw)$.
	\end{enumerate}
\end{theorem}

Item (1) was proven in {\cite[Theorem 3.36]{mahan-hyp}}.  The proof of Theorem \ref{maintech} in Sections \ref{sec-treads} and \ref{sec-qctrack} goes through mutatis mutandis to establish the remaining conclusions. We briefly refer the reader to the relevant sections in the text:\\
1) For the construction of a topological building block $M_v$ associated to a  tree-link $T_v$, see Definition \ref{def-topbb}. For the blown-up tree $\but$, and the whole topological model $S \times \but$ (Definition \ref{def-topmodeltree})   see Section \ref{sec-topbb}.\\
2) The construction of a tree-stairstep  (Definition \ref{def-tstairstep})  goes through as the metric on the riser $\RR_v$ is coarsely well-defined thanks to
the tree-link $T_v$ being coarsely well-defined.\\
3) We now turn to the proof of quasiconvexity of $\tttt$ in Theorem \ref{maintech2}. Quasiconvexity of treads (Theorem \ref{treadsunifqc}) follows as before.
There are two points to note. First, since the tree-links $T_v$ are now only coarsely well-defined, the parameters $D,k$ of Definition \ref{def-balancedtree} will be involved in Remark \ref{rmk-dep}. Second, the geometric limit argument in Lemma \ref{qcMl} really only used the fact that
no component of the ending laminations of the geometric limit can be contained in the subsurface $S_c$ obtained by cutting $S$ along $\tau(\pm)$.
This allows Theorem \ref{treadsunifqc} to go through.\\
4) Finally, with Theorem \ref{treadsunifqc} in place, Section \ref{sec-qctrack} goes through without change.

\begin{rmk}\label{qchierin3msep}
With Theorem \ref{maintech2} in place, Propositions \ref{qchierin3m} and \ref{ttforfn} now generalize in a straightforward way simply by dropping the hypothesis that the curves are non-separating. We illustrate this first by the construction of a stairstep in a 3-manifold with two treads. Note first that Lemma \ref{lem-rthick} allows for $\sigma$ to be separating. We now let $\sigma_1, \sigma_2$ be a pair of disjoint null-homologous simple closed curves. Let $S\setminus \sigma_i = W_{i1} \cup W_{i2}$. Define $\psi_i : S \to S$ such that  $\psi_i$ is a pseudo-Anosov homeomorphism in the complement of $\sigma_i$ (see Definition \ref{def-renpa} and the conventions therein). 
Then for large enough $p_1, p_2$ the 3-manifold with monodromy  $\overline{\psi_2^{p_2}}.\overline{\psi_1^{p_1}}$ is hyperbolic. Further it admits a stairstep surface $\TT$ with two risers corresponding to $\sigma_1, \sigma_2$ and two treads.  Theorem \ref{maintech2} now shows that $\TT$ is incompressible and geometrically finite. 

To extend this to stairsteps as in Section \ref{sec-treestairstepeg}, it remains only to check the balanced condition. (The large rotations argument is as before.) This needs to be checked  locally. Let $\sigma$ be a separating simple closed curve corresponding to a vertex $v_0 \in \ccs$. 
Let $S \setminus \sigma = W_1 \sqcup W_2$. It suffices  to construct  (for $i=1,2$)
$\Phi_i = \phi_{i1}\sqcup \phi_{i2}$--two pseudo-Anosov homeomorphisms  in the complement of $\sigma$ ensuring the balanced condition. Thus $\phi_{ij}: W_j \to W_j$ are pseudo-Anosov homeomorphisms and we renormalize by Dehn twists about $\sigma$ if necessary to ensure that $\Phi_i$ has uniformly bounded Dehn twists about $\sigma$. Let $\gamma_{ij} \subset \CC(W_j)$ be the axes of $\phi_{ij}$. Translating $\gamma_{ij}$ suitably by a power of an auxiliary pseudo-Anosov homeomorphism of $W_j$, we can ensure that $d_{\CC(W_1)} (\gamma_{11}, \gamma_{21})$ and $d_{\CC(W_2)} (\gamma_{12}, \gamma_{22})$ are comparable:
$$\frac{1}{2} d_{\CC(W_2)} (\gamma_{12}, \gamma_{22}) \leq d_{\CC(W_1)} (\gamma_{11}, \gamma_{21}) \leq d_{\CC(W_2)} (\gamma_{12}, \gamma_{22}).$$
We can also ensure that the shortest geodesic in $\CC(W_j)$ between $\gamma_{1j}, \gamma_{2j}$ realizing $d_{\CC(W_j)} (\gamma_{1j}, \gamma_{2j})$ is thick (since we have used a power of an auxiliary pseudo-Anosov homeomorphism of $W_j$ to translate one away from the other).
Finally, suppose that the translation lengths of $\phi_{ij}$ are within a multiplicative factor $k \geq 2$ of each other. Let $T_v$ denote the (primary) tree-link
constructed for $W_1$ from  $\phi_{11}^{p}, \phi_{21}^{p}$. Assuming that $N\gg p$, $T_v$ looks like the  letter $H$, where the horizontal bar has length approximately $d_{\CC(W_1)} (\gamma_{11}, \gamma_{21})$ and the two verticals have length of the order of $p$ (up to a multiplicative factor $k$). For the secondary subsurface $W_2$, the secondary weak hull $CH(\sec(v))$
constructed for $W_2$ from  $\phi_{12}^{p}, \phi_{22}^{p}$ is then $2k-$bi-Lipschitz homeomorphic to $T_v$ ensuring the balanced condition.
\end{rmk}

\subsection{Virtual algebraic fibering}\label{vfiber}
We say that a group $G$ {\bf virtually algebraically fibers} if {a finite index subgroup of $G$} admits
a surjective homomorphism to $\Z$ with finitely generated kernel.
We recall the following Theorem of Kielak \cite{kielak}:

\begin{theorem}\cite{kielak}\label{kielak}
Let $G$ be cubulable. Then $G$ virtually algebraically fibers if and only if the first $\ell^2-$betti number $\beta_1^{(2)} (G)$ vanishes.
\end{theorem}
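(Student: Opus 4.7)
My plan is to prove the two implications separately, noting at the outset that this is a deep external result that Kielak establishes using $L^2$--invariants and the RFRS condition; I would be following the skeleton of his argument.

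For the forward direction, I would pass to a finite-index subgroup $G_0 \leq G$ that algebraically fibers as $1 \to K \to G_0 \to \Z \to 1$ with $K$ finitely generated, and use multiplicativity of $\beta_1^{(2)}$ under finite index to reduce to showing $\beta_1^{(2)}(G_0) = 0$. From here the Hochschild--Serre spectral sequence for this extension, combined with the vanishing $\beta_p^{(2)}(\Z) = 0$ for $p \geq 1$ and the finiteness of $\beta_1^{(2)}(K)$ (which follows from $K$ being finitely generated), forces $\beta_1^{(2)}(G_0) = 0$. This is essentially a calculation in L\"uck's dimension theory and requires only classical machinery.

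For the reverse direction, I would first invoke Agol's Theorem \ref{spl} together with Haglund--Wise to upgrade cubulability to virtual specialness, and then use the standard fact that compact special groups embed in right-angled Artin groups and are therefore virtually \textbf{RFRS} (residually finite rationally solvable). Replacing $G$ by a finite-index RFRS subgroup $G_0$, the task becomes to produce a surjection $G_0 \to \Z$ with finitely generated kernel. The engine here is Kielak's theory of \emph{twisted $\ell^2$--Betti numbers} $\beta_1^{(2)}(G_0,\phi)$ associated to each character $\phi \in H^1(G_0;\R)$, and an $L^2$--analogue of the first Bieri--Neumann--Strebel--Renz invariant $\Sigma^1_{(2)}(G_0)$. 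The strategy is to show (i) that $\phi \mapsto \beta_1^{(2)}(G_0,\phi)$ is upper semicontinuous and specializes to the untwisted value at $\phi = 0$, so that the vanishing hypothesis $\beta_1^{(2)}(G_0) = 0$ forces vanishing on a nonempty open cone of characters; (ii) that for RFRS groups this vanishing is detected by an $L^2$--Novikov type condition, giving nonemptiness of $\Sigma^1_{(2)}(G_0)$; and (iii) a Sikorav--type theorem translating membership of an integer character in $\Sigma^1$ into finite generation of its kernel. A rational approximation argument then yields an integer character as required.

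The main obstacle, and the principal technical contribution of Kielak's paper, is step (ii): proving that on a RFRS group the twisted $L^2$--Betti number vanishes on an open set of characters, and that this in turn forces the Novikov homology to vanish. This requires a careful residual approximation using the RFRS tower and the Atiyah conjecture-type results that let one compare $\ell^2$--dimensions with dimensions over skew-field completions of the group ring. Everything else is, by comparison, either classical or formal.
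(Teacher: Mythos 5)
This statement is not proved in the paper at all: it is quoted verbatim from Kielak's work \cite{kielak} as an external input, used only once (in Proposition \ref{prop-vfiber}) to deduce virtual algebraic fibering of the groups constructed there. So there is no internal argument to compare yours against. As a summary of how Kielak's theorem is actually established, your outline is essentially accurate: the forward implication is classical (a finitely generated infinite normal subgroup of infinite index kills $\beta_1^{(2)}$, by L\"uck/Gaboriau -- indeed the paper itself invokes exactly this fact, citing \cite{gaboriau,thom}, in the proof of Proposition \ref{prop-vfiber}); the reverse implication is Kielak's main theorem for virtually RFRS groups, proved via Novikov-ring/twisted $\ell^2$ machinery over the Linnell--Atiyah skew field and a Sikorav-type criterion, with the RFRS tower driving the approximation. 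Two caveats. First, your write-up is an outline that explicitly defers the entire technical content to your step (ii), so it is not a proof one could check; in the context of this paper that is acceptable, since the theorem is imported as a black box, but it should be flagged as such rather than presented as a proof. Second, as literally stated the theorem's hypothesis ``cubulable'' is too weak for the reduction you (correctly) perform: passing to a virtually special, hence virtually RFRS, subgroup requires Agol's Theorem \ref{spl} and therefore hyperbolicity (or some other route to virtual specialness). The statement should be read as ``virtually special cubulable'' (equivalently, in this paper's setting, hyperbolic and cubulable), which is how it is applied in Proposition \ref{prop-vfiber}.
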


As a consequence we have:
\begin{prop}\label{prop-vfiber} Given $R >0$, there exists $L \geq 3$ such that the following holds.
Let $G$ be a hyperbolic group admitting an exact sequence $$1 \to H \to G \to Q \to 1, $$ where $H=\pi_1(S)$ is the fundamental group of a closed surface and $Q$ acts freely and cocompactly by isometries on an $R-$thick,
$L-$tight tree of non-separating homologous curves.
Then $G$ virtually algebraically fibers. A similar statement holds for 
separating homologous curves.
\end{prop}

\begin{proof}
$G$ is cubulable by Proposition \ref{ttforfn} and Remark \ref{qchierin3msep} (the latter for separating curves). Since $G$ contains an infinite index finitely generated normal subgroup $H$, $\beta_1^{(2)} (G)=0$ (see \cite[Th\'eor\`eme 6.8]{gaboriau} or \cite[Theorem 5.12]{thom}).
Hence $G$ virtually algebraically fibers by Theorem \ref{kielak}.
\end{proof}
 
 \begin{rmk}\label{rmk-incoh}
Recent work of Kropholler, Vidussi and Walsh \cite{kw} (see also \cite{kvw}) establishes that $G$ as in
 Proposition \ref{prop-vfiber} is incoherent provided $b_1(G)$ is strictly greater than $b_1(Q)$. We expect that the finitely generated
 normal subgroup of $G$ that Proposition \ref{prop-vfiber} furnishes
 is not finitely presented and hence the hypothesis $b_1(G) > b_1(Q)$
 in  \cite{kw}
 may not be necessary.
  \end{rmk}

\appendix
\section{Tracks and cubulations}\label{sec-app}
\centerline{\textsc{by Jason Manning, Mahan Mj, and Michah Sageev}}

\medskip

The main purpose of this Appendix is to reduce the problem of
cubulating the fundamental group of a hyperbolic surface bundle over a graph (in the sense of Section~\ref{sec-graphs-of-spaces}) to the construction of a track satisfying some conditions.
After providing some
background on  tracks, we show  that in order to cubulate a hyperbolic surface bundle over a graph
it suffices to construct an essential incompressible quasiconvex (EIQ) track (see Definition~\ref{def-eiq} and
Theorem~\ref{eiqimpliescube} below).

\subsection{Tracks}\label{sec-track}
The following definition is not standard, but it is the most useful for our purposes.
\begin{defn}\label{def-track}
	Let $X$ be a {cell complex}.  A closed connected subset $\TT\subset X$ is a \defstyle{track} if there is a closed $I$--bundle neighborhood $N$ of $\TT$ in $X$. Thus, there is a deformation retraction $N\to \TT$ whose point-preimages are intervals. A  closed (not necessarily connected) subset $\TT\subset X$ is a \defstyle{pattern} if there is a closed $I$--bundle neighborhood $N$ of $\TT$ in $X$.
	
	A track is said to be \defstyle{$2$-sided} if the $I$--bundle structure is trivial; otherwise it is said to be \defstyle{$1$-sided}.
	Note that a	 $2$-sided track $\TT$ has a neighborhood homeomorphic to $\TT\times [0,1]$.
	
	{We now specialize to the surface bundle case. Let $\GG$ be a graph without self-loops (this can always be arranged by subdividing 
		edges of graphs if necessary).
		Let $\Pi:X\to \mathcal{G}$ be a surface bundle over $\GG$. Note that 	every closed edge $e_j$ with the induced topology from $\GG$ is homeomorphic to a closed interval $I_j$. Hence,  $\Pi^{-1}(e_j)$ is homeomorphic to $S \times I_j$.
		 A track $\TT\subset X$ is a \defstyle{surface bundle track} if  {$\TT\cap \Pi^{-1}(e_j)$} is a properly embedded surface in $\Pi^{-1}(e_j)$ for every closed edge $e_j$.
			}
	\end{defn} 

{
	Whenever $v\in \VV(\GG)$, we refer to the surfaces of the
	form $\Pi^{-1} (v)$ as {\bf vertex fibers.} Thus if a vertex fiber is contained in $ \Pi^{-1}(e_j)$, for some $i \in J$, then  
	 it is necessarily a boundary component of   $ \Pi^{-1}(e_j)$.
}

\begin{rmk}(Historical remarks)
  Traditionally, the setting of tracks is simplicial complexes.  {In that setting a track is a connected subset of a simplicial complex whose intersection with each closed simplex is equal to the intersection of that simplex with a 
  a disjoint union of finitely many hyperplanes missing the vertices.
  (We think of each $n$-simplex as a subset of some copy of $\R^{n+1}$.)}  In particular, a track does not intersect the $0$-skeleton, intersects each edge in a finite set of points and intersects each 2-simplex in a finite disjoint union of arcs, each of which has its endpoints in the 1-skeleton.  We will refer to a track in this sense as a \defstyle{combinatorial track}.

	(Combinatorial) tracks were introduced  by Dunwoody \cite{Dunwoody} for 2-complexes and subsequently studied by him and others (see, for example,  \cite{bowditch-cutpts,delzant,dunwoodysageev,ss-ast,fujiwarapapa}). In the case that $X$ is a 3-dimensional manifold, combinatorial tracks are known as normal surfaces, {and have been studied extensively over the past century} (see \cite{kent-normal} for a survey).
	
	A track in our sense is more general, even in the presence of a simplicial structure.  For example, the intersection with a simplex need not be composed of contractible pieces.
	
\end{rmk}

Tracks are not always $\pi_1$-injective, but the aim in the main part of this paper is to construct ones that are. 
In that setting, $X$ is a surface bundle over a graph, which decomposes as a union of copies of $S_g\times I$ glued together along their boundaries.  The intersection of a track in $X$ with a copy of $S_g\times I\subset X$ is necessarily a properly embedded surface.  For the track to be $2$-sided it is necessary but not sufficient that these surfaces are $2$-sided.
If we can show a track is both $2$-sided and $\pi_1$--injective, we obtain a decomposition of $\pi_1(X)$ as a free product with amalgamation or HNN-extension.
The following lemma is exploited throughout the work.

\begin{lemma}[Dunwoody]\label{lem:dunwoody}
	If $\tau$ is a {$2$-sided} track in a simply connected {cell-complex} $X$, then $X\setminus\tau$ has two path components. 
\end{lemma}
\begin{proof}
	(See \cite{DicksDunwoody} for a proof in the case of $2$-dimensional simplicial complexes.  The proof in our setting is the same as Dicks--Dunwoody's.)
	Since $X$ and $\tau$ are path-connected and $\tau$ has a product neighborhood, it is clear that $X\setminus \tau$ has at most two path components.  Suppose there is only one.
	
	There is a closed neighborhood $N$ of $\tau$ and a homeomorphism $\phi: \tau\times[0,1]\to N$.  By shrinking $N$ if necessary we can assume that $X\setminus \mathrm{Int}(N)$ is path-connected.  Projection onto $[0,1]$ and identifying the endpoints gives a continuous surjection $\pi:X\to S^1$.
	
	Fix $x\in \tau$, and let $\sigma_1$ be the path $\phi(x\times[0,1])$.  Since $X\setminus \mathrm{Int}(N)$ is path connected, there is a path $\sigma_2$ joining the endpoints of $\sigma_1$ in the complement of $N$.  Putting $\sigma_1$ and $\sigma_2$ together we get a map $\psi: S^1\to X$.  The composition $\pi\circ\psi:S^1\to S^1$ has degree one, so $\pi_1X$ surjects $\Z$, in contradiction to the assumption that $X$ was simply connected.
\end{proof}

\subsection{Essentiality}

A track is said to be \defstyle{inessential} if the associated graph of groups decomposition is a trivial splitting as a free product with amalgamation: namely $G=A*_C B $ where one of the maps $C\to A$ or $C\to B$ is an isomorphism.  {(If the track $\tau$ is $2$--sided the edge group $C$ is $\pi_1\tau$; if it is $1$--sided then $C$ is the fundamental group of the $2$--sided double cover which forms the boundary of an $I$--bundle neighborhood of $\tau$.)}
The reason such a splitting is considered trivial is that every group has one. Namely if $G$ is any group and $H$ is any subgroup then $G\cong G*_H H$. Consequently it does not provide any new information about the group (again, see Scott and Wall \cite{scott-wall}). 

In our setting, $\tau$ will be a track in a compact space, namely a  surface bundle $X$ over a {finite} graph. In this proper, cocompact setting, essentiality will correspond to the following: every elevation of $\tau$ to the universal cover $\til  X$ will separate $\til  X$ into two components each of which contains points arbitrarily far away from $\tau$. (Here for the metric we may use any proper $G$-equivariant metric on $X$.) Such components are referred to as {\bf deep components}.

\begin{defn}\label{def-eiq}
	Let $X$ be a compact complex with hyperbolic fundamental group.
	A track $\TT\subset X$ is called an an \defstyle{essential, incompressible quasiconvex track} or \defstyle{EIQ track} if it is essential and if the induced map on fundamental group is injective with quasiconvex image.
\end{defn}

Since $X$ is compact with hyperbolic fundamental group, its universal cover $\tilde{X}$ endowed with any $\pi_1X$--equivariant geodesic metric is Gromov hyperbolic.  Its Gromov boundary $\partial \til{X}$ is a compact metrizable space, and any subset $Y\subseteq \til{X}$ has a well-defined limit set $\Lambda(Y)\subseteq \til{X}$.  The next lemma explains how the limit set of an elevation of a track cuts the boundary in two.
\begin{lemma}\label{lem:trackfacts}
	Let $X$ be a compact complex with hyperbolic fundamental group, and let $\TT\subseteq X$ be an EIQ track.  Let $\til{X}$ be the universal cover of $X$, and let $\til{\TT}$ be an elevation of $\TT$ to $\til{X}$.
	\begin{enumerate}
		\item\label{twocomponents} The complement $\til{X}\setminus \til{\TT}$ consists of two components, $H_1$ and $H_2$.
		\item\label{limitintersection} The limit set $\Lambda(\til{\TT})=\Lambda(H_1)\cap \Lambda(H_2)$.
		\item\label{nonemptyinterior} For $i\in\{1,2\}$, $\mathrm{Int}(\Lambda(H_1)) = \Lambda(H_1)\setminus\Lambda(\til{\TT})$ is nonempty.
	\end{enumerate}
\end{lemma}
\begin{proof}
	An elevation of a track is a track, so \eqref{twocomponents} follows directly from Lemma \ref{lem:dunwoody}.
	
	Since $\til{\TT}\subset \overline{H_i}$, $i=1,2$, it is clear that $\Lambda(\til{\TT})\subset \Lambda(H_1)\cap \Lambda(H_2)$. We shall now prove the reverse inclusion. Let $p \in \Lambda(H_1)\cap \Lambda(H_2)$. Since $\til{\TT}$ is quasiconvex,
	each $H_i$ must also be quasiconvex.  Fixing a base-point $o \in \til{\TT}$,  there exists a geodesic ray $\gamma$ based at $o$ and tending to $p$ which lies in a bounded neighborhood of both halfspaces.  Any point at bounded distance from both halfspaces is also bounded distance from their intersection $\tilde{\TT}$, so there is a sequence of points on $\til{\TT}$ converging to $p$.
	This proves $ \Lambda(H_1)\cap \Lambda(H_2) \subset \Lambda(\til{\TT})$ and establishes \eqref{limitintersection}.
	
	Fix $i\in\{1,2\}$.  The track $\TT$ is essential, which implies  that there are points $\{x_j\}_{j\in\bN}$ in $H_i$ so $d(x_j,\til{\TT})\to \infty$.  The stabilizer of $\til{\TT}$ acts cocompactly on it. Thus $\{x_j\}$ can be chosen so that the closest point projections to $\til{\TT}$ lie in  some compact set $K\subset\til{\TT}$.  This implies that there exists an upper bound $C_0$ on the Gromov inner product of any $x_j$ and any point in $\til \TT$ with respect to some base-point in $K$. This means that, up to passing to a subsequence, $x_j$ converges to a point $p$ whose Gromov  inner product with any point in $\til \TT$ is at {most} $C_0$. Therefore $p$ is contained  in an open subset of $\Lambda(H_i)$  missing $\Lambda(\TT)$.   This establishes \eqref{nonemptyinterior}.
\end{proof}

\subsection{EIQ track implies cubulable}\label{sec-eiqtrack1} 
{
	\begin{defn}\label{def-fi}
		We say that a track $\TT$ is freely indecomposable if $\pi_1(\TT)$ is infinite and
		freely indecomposable.
\end{defn}}

In this subsection we use Wise's Quasiconvex Hierarchy Theorem \ref{wise-hierarchy} to prove:
\begin{theorem}\label{eiqimpliescube} 
	Let $M$ be a closed surface bundle over a finite graph $\GG$, so that $\pi_1M$ is hyperbolic. Suppose that $M$ contains an EIQ freely indecomposable surface bundle track $\TT$. Then $\pi_1M$ 
	{admits a quasiconvex hierarchy and is therefore} cubulable.
\end{theorem}

Let $M$ be a surface bundle over a finite graph $\GG$ with fiber $S$.  
Also, let $\pi_1(M)=G$.
The key point we want to show that is that the vertex groups coming from cutting along an EIQ track are themselves cubulated.

\begin{lemma}\label{trackintersectsfiber}
	Let $0 \in \GG$ be an arbitrary point and
	let $S_0$ be the fiber over $0 \in \GG$.
	If $\TT$ is an EIQ in $M$, then $\TT\cap S_0$ is nonempty.
\end{lemma}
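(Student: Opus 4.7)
The plan is to argue by contradiction. Suppose $\TT \cap S_0 = \emptyset$, and fix an elevation $\til S_0 \subset \til M$ of $S_0$ to the universal cover. Because $S_0$ is disjoint from $\TT$, the connected set $\til S_0$ is disjoint from every elevation of $\TT$. Fix any such elevation $\til \TT$, and let $H_1, H_2$ denote the two components of $\til M \setminus \til \TT$ provided by the first part of Lemma~\ref{lem:trackfacts}. Connectedness forces $\til S_0$ into a single halfspace, say $\til S_0 \subseteq H_1$, and consequently $\Lambda(\til S_0) \subseteq \Lambda(H_1)$.

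The next step is to show that $\Lambda(\til S_0) = \partial \til M$. Let $H := \pi_1(S)$ be the fiber subgroup, which is the deck group of the covering $\til S_0 \to S_0$. Fixing any basepoint $x_0 \in \til S_0$, the $H$-orbit $H\cdot x_0$ lies entirely in $\til S_0$ and accumulates onto $\Lambda(H)$, so $\Lambda(H) \subseteq \Lambda(\til S_0)$. On the other hand, $H$ is an infinite normal subgroup of the non-elementary hyperbolic group $G = \pi_1(M)$, so $\Lambda(H)$ is a nonempty closed $G$-invariant subset of $\partial \til M$; the only such subset of the Gromov boundary of a non-elementary hyperbolic group is the entire boundary, since the attracting fixed points of loxodromic elements in any $G$-orbit are dense. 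Hence $\Lambda(H) = \partial \til M$, and combining the two inclusions yields $\partial \til M \subseteq \Lambda(H_1)$.

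To derive a contradiction I apply the third part of Lemma~\ref{lem:trackfacts} to $H_2$, obtaining a point $p \in \Lambda(H_2) \setminus \Lambda(\til \TT)$; by the second part, $\Lambda(H_1) \cap \Lambda(H_2) = \Lambda(\til \TT)$, so $p \in \Lambda(H_2) \setminus \Lambda(H_1)$, contradicting $\Lambda(H_1) = \partial \til M$. The only step requiring input beyond Lemma~\ref{lem:trackfacts} is the identification $\Lambda(H) = \partial \til M$, a classical fact about infinite normal subgroups of non-elementary hyperbolic groups; the remainder is direct bookkeeping with the properties of EIQ tracks already catalogued.
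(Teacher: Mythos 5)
Your proof is correct and follows essentially the same route as the paper's: both arguments lift the fiber to $\til M$, use normality of $\pi_1(S_0)$ in $G$ to conclude its limit set is all of $\partial G$, and contradict the nonempty-interior/deep-component conclusion of Lemma~\ref{lem:trackfacts}. The only cosmetic difference is that you track the elevation $\til S_0$ directly, whereas the paper lifts the whole component $M'$ of $M\setminus\TT$ containing $S_0$.
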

\begin{proof} We use the observations in Lemma \ref{lem:trackfacts} to establish this lemma.
	
  Suppose not. Then $S_0 \subset M \setminus \TT$. Let $M'$ be the component of $M \setminus \TT$ containing $S_0$. Since $\TT$ is an EIQ track, its quasiconvex universal cover $\til{\TT}$ separates the universal cover $\til M$ into {two components $N_1, N_2$ by Lemma \ref{lem:trackfacts} \eqref{twocomponents}.  Without loss of generality we assume that an elevation $\til{M'}$ of $M'$ is contained in $N_1$.
    By Lemma \ref{lem:trackfacts} \eqref{limitintersection}
          the limit set $\partial \til{\TT}$ equals the 
		intersection $\Lambda(N_1) \cap \Lambda (N_2)$.
		Further, by Lemma \ref{lem:trackfacts} \eqref{nonemptyinterior},  $\Lambda(N_2) \setminus \partial \til{\TT}$
		is non-empty, and so \begin{equation}\label{eq:alldG}\Lambda(N_1)\ne\partial G.\end{equation}
                However $\pi_1(S_0)$ is normal in $G$, and hence its limit set is $\partial G$.  An elevation of $S_0$ is contained in $\til{M'}\subset N_1$, and is preserved by $\pi_1(S_0)$, so $\Lambda(N_1) = \partial G$, contradicting~\eqref{eq:alldG}.}
\end{proof}

Since $0\in \GG$ was arbitrary, Lemma \ref{trackintersectsfiber} shows that $\TT$ cuts every fiber $S$ of $M$. To cut $M_c = M \setminus N_\ep(\TT)$ further along quasiconvex tracks, we shall need the following refinement of a theorem of Scott and Swarup \cite{scottswar} due to Dowdall, Kent and Leininger \cite{kld-coco} (see also \cite{mahan-rafi}).

\begin{theorem}\label{kldmrlemma}
	Let $1 \to \pi_1S \to G \to F\to 1$ be an exact sequence of hyperbolic groups, where $F$ is free.  Let $H <\pi_1S$ be a finitely generated infinite index subgroup of $\pi_1S$. Then $H$ is quasiconvex in $G$.
\end{theorem}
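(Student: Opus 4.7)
The plan is to show that the inclusion $H\hookrightarrow G$ is a quasi-isometric embedding, which I would carry out via Cannon--Thurston theory. First, since $F$ is free and $G$ is hyperbolic with surface-group kernel, Farb--Mosher realizes $F$ as a convex cocompact subgroup of $\operatorname{Mod}(S)$. By the work of Mitra and Mj, this yields a continuous $\pi_1 S$-equivariant surjection $\hat\imath\colon S^1 = \partial\pi_1 S \twoheadrightarrow \partial G$, whose point-preimages are either singletons or pairs of endpoints in $S^1$ of a leaf of some ending lamination $\mathcal L_\xi$, indexed by $\xi\in \partial F$. Convex cocompactness of $F$ is equivalent to each $\mathcal L_\xi$ being filling (arational) on $S$.

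Since $H$ is finitely generated of infinite index in $\pi_1 S$, a compact core argument shows $H$ is free and quasiconvex in $\pi_1 S$; its limit set $\Lambda_{\pi_1 S}(H)\subset S^1$ is a Cantor set, and the hyperbolic convex hull of this Cantor set descends under $H$ to a compact convex core $\Sigma_H$, a proper subsurface (with nonempty boundary) of the cover $\mathbb H^2/H$.

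The heart of the argument, and the step I expect to be the main obstacle, is to show that $\hat\imath$ is injective on $\Lambda_{\pi_1 S}(H)$, i.e., no leaf of any $\mathcal L_\xi$ has both endpoints in this Cantor set. Arguing by contradiction: if some leaf $\ell$ of $\mathcal L_\xi$ had both endpoints in $\Lambda_{\pi_1 S}(H)$, then $\ell$ (as a geodesic in $\mathbb H^2$) would lie in the convex hull of $\Lambda_{\pi_1 S}(H)$, so the entire $H$-orbit $H\cdot \ell$ would project into $\Sigma_H\subset S$. Taking closures and using minimality of the filling lamination $\mathcal L_\xi$, all of $\mathcal L_\xi$ would be carried by $\Sigma_H$, contradicting that any boundary curve of the proper subsurface $\Sigma_H$ is an essential simple closed curve disjoint from $\mathcal L_\xi$. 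The filling property of the ending laminations, guaranteed precisely by convex cocompactness of $F$, is what makes this step work.

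Finally, injectivity of $\hat\imath$ on $\Lambda_{\pi_1 S}(H)$ produces an $H$-equivariant topological embedding $\partial H = \Lambda_{\pi_1 S}(H)\hookrightarrow \partial G$ onto $\Lambda_G(H)$. The uniform convergence action of $H$ on $\partial H$ transfers via $\hat\imath$ to a uniform convergence action on $\Lambda_G(H)\subset \partial G$, and standard boundary dynamics for hyperbolic groups then imply that the finitely generated subgroup $H$ is quasiconvex in $G$.
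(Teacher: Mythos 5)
The paper does not actually prove this statement: it is quoted as a known result of Dowdall--Kent--Leininger \cite{kld-coco} (see also Mj--Rafi \cite{mahan-rafi}), refining Scott--Swarup \cite{scottswar}. Your architecture is essentially the Mj--Rafi route: existence of the Cannon--Thurston map for $(\pi_1S,G)$, identification of its fibers with endpoint data of the arational ending laminations $\mathcal{L}_\xi$, $\xi\in\partial F$, and reduction of quasiconvexity of $H$ to injectivity of $\hat\imath$ on $\Lambda_{\pi_1S}(H)$. That reduction is sound (though the last step is cleaner via Mitra's criterion, Lemma \ref{qccrit} of this paper: the CT map for $(H,G)$ is the restriction of $\hat\imath$ to the quasiconvex subset $\partial H=\Lambda_{\pi_1S}(H)$, and injectivity says its CT lamination is empty). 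Two caveats on the reduction itself: fibers of $\hat\imath$ are not only leaf-endpoint pairs but also full ideal vertex sets of complementary polygons of the $\mathcal{L}_\xi$, so you must also exclude two ideal vertices of a complementary polygon lying in $\Lambda(H)$; this is handled by the same method applied to a diagonal of the polygon, but it needs to be said.

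The genuine gap is in your contradiction argument for the key injectivity step. The convex core $\Sigma_H=\mathrm{Hull}(\Lambda(H))/H$ is a compact subsurface of the cover $\mathbb{H}^2/H$, not of $S$: the covering projection restricted to $\Sigma_H$ is only an immersion. Consequently its boundary geodesics project to closed geodesics on $S$ that need be neither simple nor essential as simple closed curves, and -- more to the point -- there is no reason they are disjoint from $\mathcal{L}_\xi$ in $S$, since the immersion identifies distinct sheets. So the sentence ``any boundary curve of the proper subsurface $\Sigma_H$ is an essential simple closed curve disjoint from $\mathcal{L}_\xi$'' is false as written, and the contradiction with fillingness does not yet follow. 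The standard repair is Scott's theorem (LERF / the embedded-core theorem): pass to an intermediate finite cover $S'\to S$ with $H\le\pi_1S'$ in which $\Sigma_H$ embeds as a proper essential compact subsurface. The preimage of $\mathcal{L}_\xi$ in $S'$ is still filling, while the closure in $\mathbb{H}^2/H$ of the projected leaf $\bar\ell$ is a compact lamination inside $\Sigma_H$; taking a minimal sublamination and pushing it down to $S'$ produces a nonempty closed sublamination of the preimage of $\mathcal{L}_\xi$ contained in the proper essential subsurface $\Sigma_H\subset S'$, hence disjoint from the genuinely simple essential curves $\partial\Sigma_H\subset S'$. This contradicts fillingness of the preimage lamination (which, together with minimality of $\mathcal{L}_\xi$, forces every minimal component to intersect every essential simple closed curve of $S'$). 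Note also that ``taking closures'' must be done upstairs in the cover, where compactness of the core guarantees the closure is a lamination carried by $\Sigma_H$; minimality of $\mathcal{L}_\xi$ alone does not let you take closures in $S$ and stay inside the immersed image. With the LERF step inserted and the polygon case addressed, your proof goes through.
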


{
	\begin{defn} { Let  $\TT\subset M$ be a 
	  disjoint union of a finite number of surface bundle tracks}. Say that $\TT\subset M$ is {\bf vertex-essential} if $\TT$ intersects each vertex fiber in a union of essential simple closed curves.  The \defstyle{complexity} of $\TT$ is the number of components of $\Pi^{-1}(\VV)\cap \TT$, where $\VV$ is the vertex set of the $1$-complex $\GG$.
	\end{defn}
	\begin{lemma}[Surgery on surface bundle tracks]\label{lem:surgery}
		Let $\TT\subset M$ be a freely indecomposable EIQ surface bundle track of minimal complexity among those carrying the same fundamental group.  Then $\TT$ is  vertex-essential.
	\end{lemma}
	\begin{proof}
		Choose some $v\in \VV(\GG)$ and let $S_v = \Pi^{-1}(v)$ be the vertex fiber at $v$. We argue by contradiction. Suppose $\TT\cap S_v$ contains an inessential curve $\sigma$.  Without loss of generality, we may suppose that $\sigma$ is innermost, so it bounds a disk $D_\sigma \subset S_v$ which is otherwise disjoint from $\TT$.  Let $\Star(v)$ be a small neighborhood of $v$ in $\GG$, chosen so that we can identify $\Pi^{-1}(\Star(v))$ with $S_v\times \Star(v)$, and so that under this identification $D_\sigma\times \Star(v)$ meets $\TT$ exactly in $\partial D_\sigma\times \Star(v)$.  We surger $\TT$ by removing $\partial D_\sigma\times \Star(v)$ and adding in $D_\sigma \times \partial \Star(v)$.  Let $\TT^0$ be the resulting pattern (which may be disconnected) and note that the complexity of $\TT^0$ is one less than the complexity of $\TT$. 
		
		 The fundamental group of $\TT$ is the fundamental group of a graph of groups with vertex groups equal to the fundamental groups of the components of $\TT^0$, together with a trivial group representing $\TT\cap S_v$, with an edge joining the trivial vertex group to one of the other vertex groups for each edge of $\Star(v)$. Since $\pi_1(\TT)$ is freely indecomposable, there exists a distinguished component
	$\TT_0^0$ of $\TT^0$,  such that the vertex group $\pi_1(\TT_0^0)$ equals $\pi_1(\TT)$.
  The complexity of $\TT_0^0$ is at most the complexity of $\TT^0$, which is strictly less than the complexity of $\TT$.  This contradicts the assumption that $\TT$ was minimal complexity among those tracks carrying the same fundamental group.
%
	\end{proof}
	
	\begin{proof}[Proof of Theorem~\ref{eiqimpliescube}]
		We are given $M$ a closed surface bundle over $\GG$ so that $\pi_1M$ is hyperbolic and $M$ contains an EIQ freely indecomposable surface bundle track $\TT$.  If $\TT$ is $1$-sided, we replace it by the $2$-sided track which forms the boundary of a regular neighborhood of $\TT$ in $M$.  The fundamental group of this new track is index $2$ in the fundamental group of the old track, so $\pi_1$--injectivity, quasiconvexity, and free indecomposability are clearly preserved.
			{By} Lemma~\ref{lem:surgery}, we may further assume that $\TT$ is vertex-essential.
		
		Since $\TT$ is $2$-sided and $\pi_1$--injective, $\pi_1M$ has a one-edge splitting (either an amalgam or HNN extension) with edge group $\pi_1\TT$.  Let $C$ be one of the components of $M\setminus N(\TT)$ where $N(\TT)$ is {an open product neighborhood} of $\TT$.  The quasi-convexity of $\pi_1\TT$ implies that $\pi_1C$ is also quasi-convex.  
		By the Quasi-convex Hierarchy Theorem~\ref{wise-hierarchy}, it suffices to show that $\pi_1C$ {has a quasiconvex hierarchy}.
		
		Let $v$ be a vertex of $\GG$ and let $S_v = \Pi^{-1}(v)$ be the corresponding vertex fiber.  Since $\TT$ is vertex-essential, the intersection of $C$ with $S_v$ is a union of proper essential subsurfaces $F_v^1\ldots F_v^{n(v)}$.  (They are proper subsurfaces by Lemma~\ref{trackintersectsfiber}.)  By Theorem~\ref{kldmrlemma}, each $\pi_1F_v^i$ is quasiconvex in $\pi_1M$.  {An open product neighborhood} of $S_v$ meets $C$ in a product of these subsurfaces with $\Star(v)$.  Each component of the boundary in $C$ of this neighborhood is a ($2$-sided) copy of some $F_v^i$.  Taken together these surfaces with boundary give a decomposition of $\pi_1C$ as a bipartite graph of groups.  All edges and the vertices of one color are labeled by the quasi-convex free subgroups $\pi_1F_v^i$; the vertices of the other color are labeled by fundamental groups of $3$--manifolds with boundary sitting inside $S\times I$ regions.  These $3$--manifold groups are infinite index in the fiber group, so (again applying Theorem~\ref{kldmrlemma}) they are quasi-convex free subgroups as well.  Since all the vertex groups in this graph of groups are free, and all the edge groups are quasi-convex, we may apply the Quasi-convex Hierarchy Theorem~\ref{wise-hierarchy} to conclude that $\pi_1C$ (and hence $\pi_1M$) is virtually special.
	\end{proof}
	
}

\noindent {\bf Acknowledgments:} MM thanks Jason Manning and Michah Sageev for agreeing to write Appendix~\ref{sec-app}  jointly,
and for their insights and participation during the course of the project. This paper would not have been possible without their  contribution. He gratefully acknowledges
several extremely helpful conversations with Dani Wise who generously shared many of his ideas on
cubulations with him 
and for the hospitality the McGill Mathematics Department extended during May 2015. He thanks Chris Leininger for telling him the proof of Proposition \ref{isometrictighttree} and Ken Bromberg for helpful correspondence and 
the proof of Proposition \ref{isometrictighttree-sep}. 

We are all grateful to the anonymous referees of this paper for going through an earlier draft with much 
	care and patience, and for making many perceptive comments that have improved the quality of the paper. Their diligence and attention to detail ensured that we flesh out and correct several gaps in an earlier version.

{ JM, MM, MS were supported in part by  NSF 
	Grant No. DMS-1440140, during a Fall 2016 program in Geometric Group Theory at MSRI, Berkeley, and
	Grant No.\ 346300 for IMPAN from the Simons Foundation and the matching 2015-2019 Polish MNiSW fund. MM and MS acknowledge ICTS for a workshop on Groups, Geometry and Dynamics in November 2017.}

\end{document}